\DeclareMathOperator{\aC}{C_a}
\DeclareMathOperator{\aBC}{BC_a}
\title[Normality of minimal lc centers on threefolds]{Normality of minimal log canonical centers of threefolds in mixed and positive characteristic}
\author{Emelie Arvidsson and Quentin Posva}
\date{}
\address{University of Utah, Department of Mathematics, JWB 311, 84112 Salt Lake City, Utah, USA}
\email{arvidsson@math.utah.edu}
\address{University of Utah, Department of Mathematics, JWB 311, 84112 Salt Lake City, Utah, USA}
\email{posva@math.utah.edu}
\begin{document}

\maketitle

\begin{quote}
\textsc{Abstract.} We prove the normality of minimal log canonical centers on threefold pairs which residue fields are perfect of residue characteristics $p\neq 2,3 $ and $5$. We also show that the union of all log canonical centers on threefold pairs with standard coefficients are seminormal provided that the residue characteristic is large enough.  We provide an example of a non-seminormal log canonical center on a threefold in characteristic $3$, and give sufficient conditions to construct similar examples.
\end{quote}

\tableofcontents

\section{Introduction}
Log canonical centers play a central role in the birational geometry of varieties. The worst singularities of a log canonical variety are concentrated at its log canonical centers, and the understanding of those is oftentimes a key step to the understanding of the whole variety. In particular, the cohomological and topological properties of log canonical centers are worth investigating.

General properties of log canonical centers are well-understood in characteristic $0$: see for example \cite[\S 4.3]{Kollar_Singularities_of_the_minimal_model_program}. It is known that in characteristic $0$ log canonical centers are always seminormal and that the minimal ones are actually normal. These properties can be established in two steps. First, one studies the log canonical centers of dlt pairs, which are particularly simple \cite[\S 4.2]{Kollar_Singularities_of_the_minimal_model_program}. Then one relates the log canonical centers on lc pairs to the ones on dlt pairs, using dlt resolutions and some vanishing theorems.

The situation in positive characteristic and mixed characteristic is more complicated and not as well-understood. In particular, some crucial vanishing theorems are simply false. Without restriction on the characteristics, even log canonical centers on dlt pairs need not be well-behaved \cite{Cascini_Tanaka_Plt_threefolds_with_non_normal_centres_in_char_2, Bernasconi_Non_normal_plt_centers_in_pos_char}. While the surface theory is quite similar in equicharacteristics $0$ and $p>0$ and in mixed characteristic \cite[\S 2.2, \S 3.3]{Kollar_Singularities_of_the_minimal_model_program}, already for threefolds significant differences appear.

Let us give a summary of the current understanding of log canonical centers in positive and mixed characteristics.
    \begin{itemize}
        \item \textbf{Plt centers on threefolds.} Normality in characteristic $>5$ is shown in \cite[Theorem 3.11, Proposition 4.1]{Hacon_Xu_On_the_3dim_MMP_in_pos_char}. Without restriction on the characteristic, plt centers on threefolds are known to be normal up to universal homeomorphism \cite[Theorem 1.2]{Hacon_Witaszek_On_the_relative_MMP_for_threefolds_in_low_char} (see also \cite[\S 3.2]{Gongyo_Nakamura_Tanaka_Rational_pts_on_Fano_threefolds}). This is the best we can hope for in general, since in \cite{Cascini_Tanaka_Plt_threefolds_with_non_normal_centres_in_char_2} an example of non-normal plt center on a threefold in characteristic $2$ is given.
        \item \textbf{Plt centers in arbitrary dimensions.} For every prime $p>0$ we have examples of non-normal plt centers in characteristic $p$ and dimension $2p+2$ \cite{Bernasconi_Non_normal_plt_centers_in_pos_char}.
        \item \textbf{Minimal log canonical centers on threefolds.} 
        Normality up to universal homeomorphism in mixed characteristic different from $2,3$ and $5$ follows from \cite{Filipazzi_Waldron_Connectedness_principle_char>5}. For minimal log canonical centers on semi-log canonical threefolds in characteristic $>5$, this is also proved in \cite[Theorem 7]{Posva_Gluing_for_surfaces_and_threefolds}. Minimal log--canonical centers of dlt--pairs are normal in equicharacteristic $>5$, as well as in mixed characteristic of residue characteristic $>5$ \cite[Theorem 3.1]{Bernasconi_Kollar_vanishing_theorems_for_threefolds_in_char_>5}
    \end{itemize}

 One could hope that in a fixed dimension, log canonical centers satisfy (semi-)normality properties after bounding from below the residue characteristics. This hope falls short: recently Koll\'ar \cite{Kollar22} constructed in every positive characteristic examples of non-weakly normal log canonical centers on fourfold pairs. Still, the question remains open for unions of log canonical centers on threefolds. The main result of this note is the normality of minimal ones in mixed and positive characteristics different from $2,3$ and $5$:

\begin{theorem_intro}[\autoref{maintheorem}]\label{theorem_intro:main}
Let $(X,\Delta)$ be a log canonical threefold whose closed points have perfect residue fields of characteristic different from $2,3 $ and $5$. Then the minimal log canonical centers of $(X,\Delta)$ are normal.
\end{theorem_intro}

If a minimal lc center is a surface, then the pair is plt around that surface and normality follows from the aforementioned works. So the only case left to study is when the minimal lc center is a curve, say $C\subset X$. We follow the strategy used in characteristic $0$. Indeed, when the residue characteristics are greater than five, threefold dlt singularities have all the desired properties. We would like to descend these properties from dlt modifications to log canonical pairs. In order to do so we will need some vanishing theorem. The sufficient vanishing theorem is established in \cite{Arv22}. More precisely, given a threefold log canonical singularity $(X,\Delta)$ with minimal log canonical center a curve $C$, by \cite[Prop. 14]{Arv22} there exists a dlt model $g\colon Y\to X$ with exceptional divisor $E$ contracting to $C$, such that $E$ is weakly normal and $R^1g_*\sO(-E)=0$. Combining this vanishing with a factorization of $E\to C$ through the normalization of $C$, whose existence is guaranteed by \autoref{prop:factorization_in_weakly_normal_case}, we obtain that $C$ is normal.

Building on \cite[Prop. 15]{Arv22} and on the recent work of Kawakami on Kawamata--Viehweg vanishing on Calabi--Yau surfaces \cite{Kawakami_KVV_for_log_CY_surfaces_in_large_char}, we are also able to prove some semi-normality property, provided that the characteristic is large enough:

\begin{theorem_intro}[\autoref{Mainsemi}]\label{theorem_intro:semi_normality}There exists an integer $p_0$ such that if $\Delta$ has standard coefficients and $(X,\Delta)$ is a log canonical threefold whose closed points have perfect residue fields of characteristic $p>p_0$,
then the union of all the log canonical centers of $(X,\Delta)$ is semi-normal.
\end{theorem_intro}

Whether an arbitrary union of log canonical centers is semi-normal is not a formal consequence of this theorem, even in characteristic $0$ (see for example the proof given in \cite[Sections 4.3-4.4]{Kollar_Singularities_of_pairs}).

Also, for threefold pairs whose reduced boundary is $\mathbb{Q}$-Cartier with index not divisible by any of the residue characteristics, it is possible to show that the boundary is demi-normal as long as the residue characteristics at closed points are different from $2,3$ and $5$. This can be obtained by combining the main theorem of \cite{ABP} and cyclic covers (see also \cite[Cor. 23]{Arv22}). We include it here in \autoref{prop:demi_normality} for completeness.



Let us now discuss counter-examples to (semi)-normality properties.
The only counter-example currently known to normality of plt centers for threefolds, obtained in \cite{Cascini_Tanaka_Plt_threefolds_with_non_normal_centres_in_char_2}, stems from a failure of Kawamata--Viehweg vanishing on some del Pezzo surface. After taking a cone over that surface, the authors obtain a plt pair whose boundary is not $S_2$ at the vertex. Kawamata--Viehweg vanishing holds on del Pezzo surfaces in characteristic $>5$ by \cite{ABL}, but we have counter-examples in characteristic $3$ \cite{Bernasconi_Kawamata-Viehweg_vanishing_fails} and in characteristic $5$ by \cite{ABL}. (See also \cite{Nagaoka_Non_liftable_log_dP_in_char_5} for the relation between Kawamata--Viewheg vanishing and liftability to characteristic $0$ for Picard rank one del Pezzo surfaces). Thus one can wonder if it is possible to produce non-normal plt centers on some cones over these surfaces, following the method of \cite{Cascini_Tanaka_Plt_threefolds_with_non_normal_centres_in_char_2}. 
Unfortunately, a sole instance of Kawamata--Viewheg vanishing failure is not sufficient to produce such an example. We were unable to find the right configuration of ample divisor and boundary for the del Pezzo surface in characteristic $5$ described in \cite{ABL}: actually, our calculations led us to believe that no example of non-normal plt center can be produced by taking an appropriate cone over that surface.

For the characteristic $3$ del Pezzo surface constructed in \cite{Bernasconi_Kawamata-Viehweg_vanishing_fails}, we find a configuration leading to a non-normal cone boundary. However the cone pair we obtain has a minimal lc center at the vertex, so we do not get a plt pair. Nonetheless we can show that the cone divisor is not seminormal at the vertex, contrarily to what would happen in characteristic $0$ (see \cite[4.20.(3)]{Kollar_Singularities_of_the_minimal_model_program}).  

\begin{theorem_intro}\label{intro:example}[\autoref{section:counterexample}]
There exists a threefold lc pair $(X,E)$ with reduced boundary over an algebraically closed field of characteristic $3$, such that $E$ is not $S_2$ nor seminormal.
\end{theorem_intro}
In the example we construct, the reduced boundary $E$ contains every log canonical center of the pair $(X,E)$ (see \autoref{corollary:counterex}). So this shows that \autoref{theorem_intro:semi_normality} can fail in characteristic $3$.

In the last section of this note (\autoref{section:sufficient_conditions}), we give two sets of sufficient conditions on a Picard rank one variety $X$ (of any dimension) for the existence of a non-normal cone boundary over $X$. We believe these conditions to be of independent interest. Indeed, the calculations of \cite{Cascini_Tanaka_Plt_threefolds_with_non_normal_centres_in_char_2} are quite involved, rely on a precise local analysis of the relative cone, and require the vanishing of many cohomology groups. In particular, it is not immediate to adapt their method to a potential new example. By contrast, our conditions are short and only concern the global geometry of $X$: in one case a single failure of Kawamata--Viehweg vanishing for a nef divisor, in the other a failure of Kawamata--Viehweg for an anti-ample divisor and the existence of a boundary with small numerical class. Our counterexample in characteristic $3$ is obtained by using one of these conditions.

Let us also mention that examples of pathological behaviour of threefold singularities in positive characteristic are often constructed by considering cones constructed from $\mathbb{Q
}$-Cartier polarizations (see \cite{Cascini_Tanaka_Plt_threefolds_with_non_normal_centres_in_char_2,Bernasconi_Kawamata-Viehweg_vanishing_fails, ABL}). The birational singularities of cones with respect to Cartier divisors are well-understood (see for example \cite[3.1]{Kollar_Singularities_of_the_minimal_model_program}). However, to the best of the authors' knowledge the case of $\bQ$-Cartier polarizations in arbitrary characteristic has not received a similar treatment in the literature, although this construction is well-known to the experts. For the convenience of the reader we gather in \autoref{section:birational_sing_cones} some foundational results about singularities of cones, and more precise statements about three-dimensional cones are given in \autoref{section:birational_sing_threefold_cones}.


\bigskip
Lastly, as far as threefolds in positive characteristic are concerned, our results leave us with the following interrogations:
\begin{question}
Let $(X,\Delta)$ be a threefold pair over a perfect field of characteristic $p>0$.
    \begin{enumerate}
        \item If $p=5$, are plt centers normal? What about minimal lc centers?
        \item If $p\geq 5$, are unions of lc centers seminormal? or even weakly normal? What about union of slc centers?
        \item Are Cartier boundaries demi--normal in characteristics $\leq 5$? I.e., does \autoref{prop:demi_normality} fail in characteristic $p\leq 5$?
    \end{enumerate}
\end{question}


\subsection{Acknowledgments}
We thank Christopher Hacon and Karl Schwede for useful discussions related to this work. 
EA was supported by SNF\#P500PT 203083, QP was supported by the Koll\'{a}r fund at the University of Utah.

\section{Preliminaries} 

\subsection{Conventions and notations}
We will work with schemes that are not necessarily defined over a field, but we keep most of the usual terminology. 

Throughout this article we work over an affine base $B$ which is excellent, of pure finite dimension, and admits a dualizing complex. Unless stated otherwise, all the schemes appearing in this note are essentially of finite type over $B$.

By a \emph{variety} we shall mean a separated reduced equidimensional excellent scheme over $B$. Note that a variety in our sense might be reducible, even disconnected. A \emph{curve} (resp. a \emph{surface}, resp. a \emph{threefold}) is a variety of absolute dimension one (resp. two, resp. three). In particular the dimension shall always mean the absolute dimension and not the relative dimension over $B$.

Let $X$ be a variety and $\sF$ a coherent $\sO_X$-module. We say that $\sF$ is $S_i$ if $\depth_{\sO_{X,x}}\sF_x\geq \min\{i,\dim\sF_x\}$ for every $x\in X$.

If $X$ is a reduced Noetherian scheme, its \emph{normalization} is defined to be its relative normalization along the structural morphism $\bigsqcup_\eta\Spec(k(\eta))\to X$ where $\eta$ runs through the generic points of $X$. Recall that $X$ is normal if and only if it is regular in codimension one and $\sO_X$ is $S_2$.

If $X$ is a variety, we define $\bQ$-divisors and $\bQ$-Cartier divisors the usual way. We usually use these notions when $X$ is (demi-)normal.

A scheme $X$ essentially of finite type over $B$ has a dualizing complex $\omega_X^\bullet$ and a dualizing sheaf $\omega_X:=h^{-i}\left(\omega_X^\bullet\right)$ where $i:=\max\{j\mid h^{-j}\left(\omega_X^\bullet\right)\neq 0\}$. See \cite[\S 2.1]{Bhatt&Co_MMP_for_3folds_in_mixed_char} for more details. If $X$ is a variety and $\omega_X$ is invertible in codimension one, it defines a canonical divisor $K_X$.

A \textit{pair} is the data $(X,\Delta)$ where $X$ is a demi-normal variety, $\Delta$ is a $\bQ$-divisor with coefficients in $[0,1]$ such that no component of $\Delta$ is contained in $\Sing(X)$, and satisfying that $K_X+\Delta$ is $\bQ$-Cartier. If $f\colon Y\to X$ is a proper birational morphism from a normal variety, then there is a $\bQ$-divisor $\Delta_Y$ on $Y$ such that 
		$$K_Y+\Delta_Y=f^*(K_X+\Delta)$$
and $\Delta_Y$ is uniquely defined if we assume $f_*K_Y=K_X$, which we always do. Running through all such $f\colon Y\to X$, we define the singularity of the pair $(X,\Delta)$ (e.g. \textit{lc}, \textit{klt}) the usual way as in \cite{Kollar_Mori_Birational_geometry_of_algebraic_varieties}. See \cite[\S 2.5]{Bhatt&Co_MMP_for_3folds_in_mixed_char} for details.

Let $(X,\Delta)$ be a pair. We say that a reduced closed subscheme $Z\subset X$ is an \emph{lc center} of $(X,\Delta)$ if there exists a proper birational morphism $f\colon Y\to X$ and an $f$-exceptional prime divisor $E\subset Y$ such that $\coeff_{E}\Delta_Y=1$ and $f(E)=Z$.

We refer to \cite[\S 2.5]{Bhatt&Co_MMP_for_3folds_in_mixed_char} for the various notions of positivity (ampleness, nefness, etc.) for $\bQ$-Cartier $\bQ$-divisors on proper $B$-schemes.

\subsection{Universal homeomorphisms}
Recall that a morphism $f\colon X\to Y$ is a \textbf{universal homeomorphism} if for every $T\to Y$, the base-change $f_T\colon X_T\to T$ is an homeomorphism. Extensions of rings inducing universal homeomorphisms have the following algebraic property:

\begin{proposition}[{\cite[0CNE]{StacksProject}}]\label{prop:CA_description_of_univ_homeo}
Let $A\subset B$ be an extension of rings. Then the following are equivalent:
    \begin{enumerate}
        \item the induced morphism $\Spec B\to \Spec A$ is a universal homeomorphism;
        \item every finite subset of $B$ is contained in a sub-extension $A[b_1,\dots,b_n]\subset B$ such that for every $i=1,\dots,n$ we have either:
            \begin{itemize}
                \item $b_i^2,b_i^3\in A[b_1,\dots,b_{i-1}]$, or
                \item there exists a prime $p>0$ such that $b_i^p,pb_i\in A[b_1,\dots,b_{i-1}]$.
            \end{itemize}
    \end{enumerate}
\end{proposition}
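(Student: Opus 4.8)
The plan is to reduce the statement to the standard scheme-theoretic criterion that $\Spec B\to\Spec A$ is a universal homeomorphism if and only if the ring map $A\to B$ is integral, universally injective (i.e.\ radicial), and surjective on spectra. I would take this criterion as known (it too is recorded in the Stacks project), together with the fact that each of these three properties is stable under filtered colimits of $A$-algebras. Since $B=\varinjlim A[b_1,\dots,b_n]$, the colimit running over all finite subsets of $B$, both implications become assertions about the finitely generated sub-extensions $A[b_1,\dots,b_n]$, and I would organise the argument around the two directions.

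For $(2)\Rightarrow(1)$, I would first verify that a single elementary step $A'\subseteq A'[b]$ is a universal homeomorphism in each allowed case. If $b^2,b^3\in A'$, then $b$ is a root of the monic polynomial $T^2-b^2$, so the step is finite and hence integral; a direct computation on the fibers shows it is bijective with trivial residue extensions, so it is radicial and surjective. If $b^p,pb\in A'$, then $b$ is a root of $T^p-b^p$, giving integrality, and over a prime $\fp\subset A'$ one separates the case $p\in\fp$, where $b$ becomes a $p$-th root and the residue extension is purely inseparable, from the case $p\notin\fp$, where $pb\in A'$ forces $b$ into the localised base; either way the step is radicial and surjective. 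A finite tower of such steps is then a universal homeomorphism by composition, and passing to the filtered colimit over all finite subsets of $B$ shows that $\Spec B\to\Spec A$ is a universal homeomorphism.

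For $(1)\Rightarrow(2)$, assume $A\to B$ is integral, radicial and surjective, and fix a finite subset of $B$. I would let $A'\subseteq B$ be the set of elements lying in some subring built from $A$ by a finite tower of elementary steps inside $B$, and first check that $A'$ is a subring: two such tower-subrings are contained in a common one, obtained by concatenating the towers, because an elementary step stays elementary after enlarging its base. It therefore suffices to reach each element of the prescribed finite subset separately. Since $A\subseteq A'$, the extension $A'\to B$ is again integral, radicial and surjective, so the whole problem reduces to showing that for any $b\in B$ the one-generated extension $A'\subseteq A'[b]$ refines into a tower of elementary steps. Here lies the genuine content: using that the fibers of $\Spec A'[b]\to\Spec A'$ are single points with purely inseparable residue extensions, I would analyse the monic relation satisfied by $b$ locally at each prime, separating the residue-characteristic-$p$ locus (where $b$ behaves like a $p$-th root and yields type-II generators built from the $p$-power iterates of $b$) from the subintegral behaviour (which yields type-I generators $b_i$ with $b_i^2,b_i^3$ in the previously adjoined subring).

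The hard part will be exactly this decomposition of a one-generated universal homeomorphism into elementary steps, that is, converting the qualitative data ``integral and radicial'' into an explicit finite chain of adjunctions of the two prescribed shapes. I expect the fiberwise analysis to be tractable prime by prime, since universal injectivity pins down the structure of each fiber; the main obstacle is to globalise these pointwise decompositions into a single tower valid over all of $\Spec A'$, assembling the purely inseparable and the subintegral contributions into one coherent sequence of generators.
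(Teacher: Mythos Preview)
The paper does not prove this proposition at all: it is quoted verbatim from the Stacks Project with the citation \texttt{[0CNE]} and no argument is given. So there is no ``paper's proof'' to compare against; the relevant comparison is with the Stacks Project's own argument.

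Your direction $(2)\Rightarrow(1)$ is correct and essentially matches the standard proof: each elementary step is a finite extension inducing a universal homeomorphism (the $b^2,b^3$ case gives trivial residue extensions, the $b^p,pb$ case gives purely inseparable ones), these compose, and the filtered colimit over finite subsets preserves the property.

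For $(1)\Rightarrow(2)$ your architecture is reasonable, but as you explicitly acknowledge, the entire content lies in the step you have not done: decomposing a single one-generator extension $A'\subseteq A'[b]$, with $\Spec A'[b]\to\Spec A'$ a universal homeomorphism, into a finite tower of elementary steps. Your fibrewise heuristic is correct in spirit but does not by itself produce global generators, and the ``main obstacle'' you name is precisely the proposition. In the Stacks Project this is handled by an explicit induction: one writes down a monic relation for $b$, uses it to produce an element $b_1$ of one of the two elementary types (separating the case where some prime $p$ is nilpotent on the relevant locus from the subintegral case), and then shows that $b$ satisfies a monic relation of strictly lower degree over $A'[b_1]$. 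This degree-drop is what globalises the fibrewise picture and terminates the process; without it your sketch is a restatement of the problem rather than a proof. If you want a self-contained argument, that induction is what you must supply.
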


Next we recall some sufficient conditions on a locally Noetherian scheme $X$ for its normalization morphism to be a universal homeomorphism. 

\begin{definition}[{\cite[0BPZ]{StacksProject}}]
A local ring $\sO$ is called \textbf{geometrically unibranch} if $\sO_\text{red}$ is a domain, its integral closure $\sO'$ in $\Frac(\sO_\text{red})$ is local, and the extension of residue fields $k(\sO)\subset k(\sO')$ is purely inseparable.

A scheme $X$ is called geometrically unibranch if every local ring $\sO_{X,x}$ is geometrically unibranch.
\end{definition}

\begin{lemma}[{\cite[0BQ4]{StacksProject}}]\label{lemma:irred_etale_neigh_implies_geom_unibranch}
Let $X$ be a scheme, and $x\in X$ a point. Assume that: if $(x'\in X')\to (x\in X)$ is any \'{e}tale neighbourhood, then $X'$ is irreducible at $x'$. Then $\sO_{X,x}$ is geometrically unibranch.
\end{lemma}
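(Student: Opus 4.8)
The plan is to translate ``geometrically unibranch'' into a single condition on the strict henselization $\sO_{X,x}^{\mathrm{sh}}$ and then to extract that condition from the hypothesis by a colimit argument. Recall that the three requirements in the definition of geometrically unibranch---that $\left(\sO_{X,x}\right)_{\red}$ be a domain, that the normalization of $\left(\sO_{X,x}\right)_{\red}$ be local, and that the corresponding residue field extension be purely inseparable---are equivalent to the single statement that $\sO_{X,x}^{\mathrm{sh}}$ has a unique minimal prime, i.e.\ that $\left(\sO_{X,x}^{\mathrm{sh}}\right)_{\red}$ is a domain. I would take this equivalence from the Stacks Project \cite{StacksProject}. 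Thus the goal reduces to proving that $\left(\sO_{X,x}^{\mathrm{sh}}\right)_{\red}$ is a domain. As a sanity check, the reducedness-to-a-domain clause alone is already the hypothesis applied to the identity étale neighborhood $(x\in X)\to(x\in X)$: ``$X$ irreducible at $x$'' says precisely that $\Spec\sO_{X,x}$ has a unique minimal prime.

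The heart of the argument is a filtered-colimit computation. The strict henselization is the filtered colimit $\sO_{X,x}^{\mathrm{sh}}=\varinjlim \sO_{X',u'}$ running over all étale neighborhoods $(u'\in X')\to(x\in X)$ equipped with a compatible embedding of $\kappa(u')$ into a fixed separable closure of $\kappa(x)$. Reduction commutes with filtered colimits, since the nilradical does, so $\left(\sO_{X,x}^{\mathrm{sh}}\right)_{\red}=\varinjlim\left(\sO_{X',u'}\right)_{\red}$. By hypothesis each $X'$ is irreducible at $u'$, so each $\left(\sO_{X',u'}\right)_{\red}$ is a reduced local ring with a unique minimal prime, hence a domain. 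A filtered colimit of domains is a domain as soon as it is nonzero---any relation $ab=0$ in the colimit already holds at a finite stage, where one of the factors must vanish---and $\sO_{X,x}^{\mathrm{sh}}$ is a nonzero local ring. Therefore $\left(\sO_{X,x}^{\mathrm{sh}}\right)_{\red}$ is a domain, as required.

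The step I expect to demand the most care is the opening equivalence between the definition of geometrically unibranch and the single-minimal-prime condition on $\sO_{X,x}^{\mathrm{sh}}$: this is exactly where the locality of the normalization and the pure inseparability of the residue extension get repackaged, and one must track the separable residue growth correctly. A second subtlety, easy to overlook, is that the hypothesis must be used in full: the étale neighborhoods entering the strict henselization include those for which $\kappa(u')/\kappa(x)$ is a nontrivial finite separable extension, and it is precisely these that account for the ``geometric'' in geometrically unibranch. Were one to use only neighborhoods with $\kappa(u')=\kappa(x)$, the same computation would identify $\left(\sO_{X,x}^{h}\right)_{\red}$ as a domain and would yield merely that $\sO_{X,x}$ is unibranch.
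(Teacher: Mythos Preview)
The paper does not give its own proof of this lemma; it simply cites the Stacks Project tag 0BQ4. Your argument is correct and is essentially the standard one: the equivalence between ``$\sO_{X,x}$ is geometrically unibranch'' and ``$\sO_{X,x}^{\mathrm{sh}}$ has a unique minimal prime'' is recorded in the Stacks Project (tag 06DM), the description of the strict henselization as a filtered colimit over \'etale neighborhoods is standard, and the passage to the reduction and the filtered-colimit-of-domains step are both valid. Your closing remark distinguishing the henselization from the strict henselization, and hence unibranch from geometrically unibranch, is accurate and worth keeping.
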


\begin{lemma}\label{lemma:geom_unibranch_implies_univ_homeo_normalization}
Let $X$ be an excellent geometrically unibranch scheme. Then the normalization morphism $X^\nu\to X$ is a universal homeomorphism.
\end{lemma}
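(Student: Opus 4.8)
The plan is to verify the three properties that characterize a universal homeomorphism: a morphism of schemes is a universal homeomorphism if and only if it is integral, surjective, and universally injective (see \cite[04DC]{StacksProject}). First I would reduce to the case where $X$ is integral. The reduction morphism $X_\red\to X$ is a closed immersion that is bijective on points and induces isomorphisms on residue fields, hence is itself a universal homeomorphism; since the normalization factors through $X_\red$ and universal homeomorphisms are stable under composition, it suffices to treat $X$ reduced. Moreover, geometric unibranchness forces each $\sO_{X,x}$ to be a domain, so no two irreducible components of $X$ can meet at a point. The components are therefore open and closed, the normalization decomposes accordingly, and we may assume $X$ is integral.

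Next I would dispatch integrality and surjectivity. Since $X$ is excellent it is Nagata, so the normalization $\nu\colon X^\nu\to X$ is finite, in particular integral. It is also surjective: $\nu$ is dominant, and being integral it is closed, so its image is a closed subset containing the generic point, namely all of $X$.

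It remains to prove that $\nu$ is universally injective, which amounts to showing that $\nu$ is injective on points and induces purely inseparable residue field extensions (see \cite[01S4]{StacksProject}). Fix $x\in X$. Because the formation of the normalization of an integral scheme commutes with localization at $x$, the points of $X^\nu$ lying over $x$ are in bijection with the maximal ideals of the integral closure $\sO'$ of $\sO_{X,x}$ in its fraction field, and the residue field of such a point is the corresponding residue field of $\sO'$. By the definition of geometrically unibranch, $\sO'$ is local, so there is a unique point of $X^\nu$ over $x$, and the residue field extension $k(x)\subset k(\sO')$ is purely inseparable. This yields both injectivity on points and pure inseparability of the residue extensions, completing the verification.

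The main obstacle I anticipate is the local identification in the last step, namely checking carefully that the fiber of the normalization over $x$ is governed by the integral closure $\sO'$ of the local ring $\sO_{X,x}$, with matching residue fields. This rests on the compatibility of normalization with localization at $x$ and on identifying the stalks of $X^\nu$ above $x$ with the localizations of the semilocal ring $\sO'$ at its maximal ideals. Once this compatibility is in place, the definition of geometric unibranchness supplies exactly the two properties needed, and the criterion of \cite[04DC]{StacksProject} finishes the proof.
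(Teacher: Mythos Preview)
Your proof is correct and follows essentially the same idea as the paper's: both use excellence to ensure the normalization is finite (hence integral and universally closed), and both extract from the definition of geometric unibranchness that the fibers are singletons with purely inseparable residue field extensions. The paper packages this slightly differently, citing \cite[0C1S]{StacksProject} for universal bijectivity and then concluding via surjectivity and universal closedness, whereas you invoke the criterion of \cite[04DC]{StacksProject} and spell out the reduction to the integral case and the local verification by hand; but the mathematical content is the same.
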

The excellent hypothesis implies that of local Noetherianity, and in this context the normalization is defined: see \cite[035E]{StacksProject}.
\begin{proof}
By \cite[0C1S]{StacksProject} the normalization $\nu\colon X^\nu\to X$ is universally bijective. Since $\nu$ is surjective \cite[035Q]{StacksProject}, and moreover universally closed by the excellence hypothesis, we obtain that $\nu$ is a universal homeomorphism.
\end{proof}

\subsection{Weak normality and semi-normality}
We now recall the definition of semi-normal and weakly normal schemes. These notions originally appeared in \cite{Andreotti_Norguet_Convexite_holomorphe, Andreotti_Bombieri_On_the_homeomorphisms_of_varieties, Traverso_Seminormality_and_Picard_group}. We follow the more recent treatment given in \cite[ I.7]{Kollar_Rational_curves}.

\begin{definition}[{\cite[I.7.2.1]{Kollar_Rational_curves}}]\label{def:wk_and_semi_normality}
Let $X$ be an excellent reduced scheme with normalization $\nu\colon X^\nu\to X$. A scheme $X'$ together with a morphism $\varphi\colon X'\to X$ is called the \textbf{weak normalization} (resp. the \textbf{semi-normalization}) of $X$ if the following properties hold:
    \begin{enumerate}
        \item the normalization $X^\nu\to X$ factors through $\varphi$,
        \item $\varphi$ is finite surjective, and an isomorphism over the generic points of $X$, 
        \item $\varphi$ is an homeomorphism, and for any $x\in X$ with unique preimage $x'\in X'$, the field extension $k(x)\subset k(x')$ is purely inseparable (resp. an isomorphism),
        \item if $\varphi'\colon Y\to X$ also satisfies the above properties, then there is a factorization
                $$\varphi\colon X'\longrightarrow Y\overset{\varphi'}{\longrightarrow} X.$$
    \end{enumerate}
We say that $X$ is \textbf{weakly normal} (resp. \textbf{semi-normal}) if $\varphi\colon X'\to X$ is the identity morphism.
\end{definition}

Let us also recall the notion of demi-normality: it will appear in \autoref{section:demi_normality}.
\begin{definition}
A reduced Noetherian scheme $X$ is \textbf{demi-normal} if it is $S_2$ and for every codimension one point $\eta\in X$, the local ring $\sO_{X,\eta}$ is either regular or a nodal singularity (see \cite[1.41]{Kollar_Singularities_of_the_minimal_model_program}).
\end{definition}

See \cite[\S 2.3.1]{Posva_Gluing_for_surfaces_in_mixed_char} for the relations between demi-normality, semi-normality and weak normality.

\begin{lemma}[{\cite[I.7.2.3]{Kollar_Rational_curves}}]\label{lemma:existence_of_wk_normalizations}
Let $X$ be a reduced excellent scheme. Then $X$ has a weak normalization and a semi-normalization. They coincide if $X$ is a $\bQ$-scheme.
\end{lemma}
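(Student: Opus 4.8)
The plan is to construct both normalizations as intermediate schemes between $X$ and its normalization $\nu\colon X^\nu\to X$, and to extract uniqueness from the universal property~(4). First I note that any object satisfying (1)--(4) is unique up to unique isomorphism: given two candidates $\varphi\colon X'\to X$ and $\varphi'\colon Y\to X$, applying (4) in both directions produces mutually inverse $X$-morphisms. Hence it suffices to exhibit one scheme with the required properties. Since $X$ is excellent and reduced, $\nu$ is finite, so I set $R:=\nu_*\sO_{X^\nu}$, a coherent sheaf of $\sO_X$-algebras with $\sO_X\subseteq R$. Finite $X$-schemes fitting in a factorization $X^\nu\to X_\sA\to X$ correspond exactly to the $\sO_X$-subalgebras $\sO_X\subseteq\sA\subseteq R$, and for every such $\sA$ conditions (1) and (2) hold automatically: (1) because $\nu$ visibly factors through $X_\sA$, and (2) because $\sA$ is a finite $\sO_X$-module (submodule of the finite module $R$ over a Noetherian base), is integral hence surjective on spectra, and $\nu$ is already an isomorphism over the generic points of the reduced scheme $X$.

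Because both problems are local on $X$ and normalization commutes with localization, I may assume $X=\Spec A$ is affine with finite normalization $A\subseteq\oA$. Since the candidate morphisms are finite, the condition (3) is equivalent to $\Spec C\to\Spec A$ being a universal homeomorphism --- inducing isomorphisms on residue fields in the semi-normal case, and merely purely inseparable extensions in the weak case --- and \autoref{prop:CA_description_of_univ_homeo} supplies the element-wise description of the latter. For each of the two problems I then consider the family of subrings $A\subseteq C\subseteq\oA$ satisfying the corresponding condition. Each family contains $A$, and I claim it is stable under the compositum $C_1C_2\subseteq\oA$: factoring $\Spec(C_1C_2)\to\Spec C_1\to\Spec A$, the second arrow is a member, while the first is dominated by the base change of $A\to C_2$ along $A\to C_1$, and universal homeomorphisms with the respective residue behaviour are preserved under base change and composition. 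As $\oA$ is Noetherian, the compositum of the entire family is attained by finitely many of its members, hence is itself in the family; this maximal subring $C^{\max}$ is the sought normalization. Condition (3) holds by construction, and condition (4) is precisely maximality, since any competing $Y=\Spec C'$ furnishes a member $C'$ of the family, whence $C'\subseteq C^{\max}$ and the desired factorization. These affine constructions are canonical and glue over an affine cover of a general $X$.

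For the last assertion, suppose $X$ is a $\bQ$-scheme, so every residue field has characteristic $0$ and admits no nontrivial purely inseparable extension. The residue-field condition in (3) for the weak normalization (purely inseparable) then coincides with that for the semi-normalization (isomorphism), so the two defining families are literally the same and their maximal subrings agree. Equivalently, in \autoref{prop:CA_description_of_univ_homeo} the second alternative never occurs over $\bQ$, since $p$ is invertible and $pb_i,b_i^p\in A[b_1,\dots,b_{i-1}]$ already forces $b_i\in A[b_1,\dots,b_{i-1}]$. By the uniqueness established above, the weak normalization and the semi-normalization coincide.

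The main obstacle I expect is the compositum-closure step: showing that being a universal homeomorphism with trivial (resp. purely inseparable) residue extensions is preserved under the join $C_1C_2$ inside $\oA$. This is where one must invoke the stability of such extensions under base change rather than argue naively point by point, using in particular that $\Spec(C_1C_2)\to\Spec C_1$ is dominated by the base change of $A\to C_2$. The gluing of the affine constructions and the verification that the maximal subring genuinely realizes conditions (1)--(4) are comparatively routine once this closure is in hand.
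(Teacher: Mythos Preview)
The paper does not give its own proof of this lemma; it is stated with a citation to \cite[I.7.2.3]{Kollar_Rational_curves} and no proof environment follows. Your argument is essentially the standard one found in that reference: realize the (weak/semi-)normalization as the spectrum of the maximal $\sO_X$-subalgebra of $\nu_*\sO_{X^\nu}$ satisfying the relevant residue-field condition, using closure under compositum and Noetherianity of $\nu_*\sO_{X^\nu}$ as an $\sO_X$-module to see that a maximum exists.

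Your proof is correct. The compositum-closure step you flag as the main obstacle is indeed the crux, and your sketch is right: the surjection $C_1\otimes_A C_2\twoheadrightarrow C_1C_2$ exhibits $\Spec(C_1C_2)\to\Spec C_1$ as a closed immersion followed by the base-change of $\Spec C_2\to\Spec A$, hence universally closed and universally injective; surjectivity comes from integrality of $C_1\subseteq C_1C_2$, and the residue-field condition transfers because closed immersions induce isomorphisms on residue fields at points in the image. One small point of phrasing: when you write ``as $\oA$ is Noetherian'', what you are actually using is that $\oA$ is a Noetherian \emph{$A$-module} (finiteness of normalization over an excellent ring), so that ascending chains of $A$-subalgebras stabilize. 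The gluing step is justified by the uniqueness you establish at the outset, which forces agreement on overlaps.
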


An example of weakly normal schemes that will be relevant in this note is given by the following lemma.
\begin{lemma}\label{lemma:divisor_is_weakly_normal}
Let $(X,\Delta+E)$ be an lc pair, where the residue fields of $X$ have characteristics $\neq 2$, and where $E$ is a (possibly reducible) divisor with coefficients $1$. Assume that $E$ is $S_2$. Then $E$ is weakly normal.
\end{lemma}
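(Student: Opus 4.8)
The plan is to reduce the statement to a codimension-one computation by exploiting the $S_2$ hypothesis, and then to analyze the codimension-one behaviour of $E$ by restricting to the surface case via adjunction.

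First I would isolate the following general principle: \emph{if a reduced excellent scheme $E$ is $S_2$ and the local ring $\sO_{E,\eta}$ is weakly normal at every codimension-one point $\eta$, then $E$ is weakly normal.} Indeed, by \autoref{lemma:existence_of_wk_normalizations} the weak normalization $\varphi\colon E^{\mathrm{wn}}\to E$ exists; it is finite and birational, and it commutes with localization, so the hypothesis says that $\varphi$ is an isomorphism over every codimension-one point and hence that its non-isomorphism locus has codimension $\geq 2$ in $E$. Viewing $\varphi_*\sO_{E^{\mathrm{wn}}}$ as a subsheaf of the total ring of fractions $\sK(E)$ that contains $\sO_E$ and coincides with it at all points of codimension $\leq 1$, the $S_2$ property of $\sO_E$ (sections extend across loci of codimension $\geq 2$) forces $\varphi_*\sO_{E^{\mathrm{wn}}}=\sO_E$, so $\varphi$ is an isomorphism. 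Since $E$ is $S_2$ by hypothesis, it therefore suffices to establish weak normality at the codimension-one points of $E$.

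Next I would fix a codimension-one point $\eta\in E$. If $\eta$ is a generic point of $E$ then $\sO_{E,\eta}$ is a field and there is nothing to prove, so assume $\eta$ is a non-generic codimension-one point, hence of codimension two in $X$. Discarding a subset of codimension $\geq 2$ in $E$ if necessary, we may assume $X$ is normal at $\eta$, so that $S:=\Spec\sO_{X,\eta}$ is a two-dimensional normal local scheme and $(S,\Delta+E)$ is a log canonical surface pair. By adjunction for the reduced divisor $E$ (which has coefficient one), on the normalization $E^\nu$ one has
\[
K_{E^\nu}+\Diff_{E^\nu}(\Delta)=(K_X+\Delta+E)|_{E^\nu},
\]
and log canonicity bounds the coefficients of the different by one. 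Any non-normal point of $E$ contributes to $\Diff_{E^\nu}(\Delta)$ with coefficient at least one, so this coefficient equals one and the contribution is reduced; by the theory of the different together with the classification of two-dimensional log canonical singularities and their reduced boundaries, this means precisely that $E$ has a node at $\eta$. Because the residue characteristic differs from $2$, such a node is an ordinary double point whose normalization induces either a split or a separable quadratic residue extension; neither can be refined by a purely inseparable modification, so the node is weakly normal. Thus $\sO_{E,\eta}$ is weakly normal, and together with the first paragraph this proves the lemma.

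I expect the main obstacle to be the codimension-one (surface) analysis: controlling the local structure of the reduced boundary of a log canonical surface pair and showing it is nodal. This is exactly where $p\neq 2$ is indispensable — in residue characteristic $2$ a nondegenerate rank-two quadratic form can degenerate, and the boundary can acquire a non-nodal, non-weakly-normal double point, which is the very mechanism behind the pathological examples discussed elsewhere in the paper. The $S_2$-globalization of the first paragraph, by contrast, is purely formal. One should also keep in mind that the pair is only assumed log canonical (not plt), so that the different may be nontrivial and several branches of $E$ may meet along $\eta$; it is precisely the adjunction estimate bounding the different by one that rules out anything worse than a node.
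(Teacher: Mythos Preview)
Your proposal is correct and follows essentially the same approach as the paper: both show that $E$ is nodal in codimension one via the classification of log canonical surface singularities, then combine this with the $S_2$ hypothesis and $p\neq 2$ to conclude weak normality. The only cosmetic difference is that the paper packages the globalization step through the notion of demi-normality (an $S_2$ scheme nodal in codimension one is demi-normal, and demi-normal schemes in characteristic $\neq 2$ are weakly normal), whereas you unpack this directly via the $S_2$ extension principle.
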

\begin{proof}
By the classification of lc singularities on surfaces \cite[2.31]{Kollar_Singularities_of_the_minimal_model_program}, we see that $E$ has at worse nodal singularities in codimension $1$. Since $E$ is also $S_2$, it is demi-normal. Now demi-normal schemes in char $\neq 2$ are weakly normal \cite[2.3.8]{Posva_Gluing_for_surfaces_in_mixed_char}. 
\end{proof}

Let us next give an algebraic description of weak normality. We stick to reduced excellent rings for simplicity.

\begin{proposition}[{\cite[\S 1]{Yanagihara_Weakly_normal_ring_extensions}}]\label{prop:CA_characterization_of_weak_normality}
Let $A$ be a reduced excellent ring with normalization $\bar{A}$.
    \begin{enumerate}
        \item $A$ is semi-normal if and only if: for every $x\in\bar{A}$, the fact that $x^2,x^3\in A$ implies that $x\in A$.
        \item $A$ is weakly normal if and only if: $A$ is semi-normal, and for every $x\in \bar{A}$ the fact that $x^p,px\in A$ for some prime $p>0$ implies that $x\in A$.
    \end{enumerate}
\end{proposition}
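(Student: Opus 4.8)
The plan is to read both equivalences against the scheme-theoretic \autoref{def:wk_and_semi_normality}. By \autoref{lemma:existence_of_wk_normalizations} together with excellence, the semi-normalization and the weak normalization of $\Spec A$ are the spectra of finite subextensions $A\subseteq A^{\mathrm{sn}}\subseteq A^{\mathrm{wn}}\subseteq\bar A$, and $A$ is semi-normal (resp. weakly normal) exactly when $A=A^{\mathrm{sn}}$ (resp. $A=A^{\mathrm{wn}}$); here $A^{\mathrm{sn}}\subseteq A^{\mathrm{wn}}$ because the semi-normalization satisfies the defining properties of the weak normalization. So I would reduce each part to translating the universal property of these two subrings into the stated relations on elements of $\bar A$. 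Note first that any $x\in\bar A$ with $x^2,x^3\in A$ (or with $x^p,px\in A$) is integral over $A$, so that $A[x]$ is module-finite over $A$.

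Forward implications first. Given $x\in\bar A$ with $x^2,x^3\in A$, one has $A[x]=A+Ax$, and computing the fibres of $\varphi\colon\Spec A[x]\to\Spec A$ shows each is a single point with residue field $\kappa(\mathfrak p)$ (the fibre ring being $\kappa(\mathfrak p)$ or $\kappa(\mathfrak p)[t]/(t^2)$); since moreover $x\in\bar A$ makes $\varphi$ an isomorphism over the generic points, $\varphi$ satisfies conditions (1)--(3) of \autoref{def:wk_and_semi_normality} for the semi-normalization, and the universal property (4) yields $A[x]\subseteq A^{\mathrm{sn}}$. If $A$ is semi-normal this gives $x\in A$. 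Similarly, if $x^p,px\in A$ then \autoref{prop:CA_description_of_univ_homeo}, applied with the single element $b_1=x$, shows that $\varphi$ is a universal homeomorphism which is an isomorphism over the generic points, hence $A[x]\subseteq A^{\mathrm{wn}}$; weak normality then gives $x\in A$. As a weakly normal ring is semi-normal (because $A\subseteq A^{\mathrm{sn}}\subseteq A^{\mathrm{wn}}=A$), this settles the forward direction of both parts.

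For the reverse implications I would argue by contradiction. If $A\neq A^{\mathrm{wn}}$, pick $y\in A^{\mathrm{wn}}\setminus A$; since $\Spec A^{\mathrm{wn}}\to\Spec A$ is a universal homeomorphism, \autoref{prop:CA_description_of_univ_homeo} places $y$ inside some $A[b_1,\dots,b_n]\subseteq A^{\mathrm{wn}}$ with each $b_i$ of one of the two listed types relative to $A[b_1,\dots,b_{i-1}]$. Taking the least $i$ with $b_i\notin A$, one has $A[b_1,\dots,b_{i-1}]=A$, so $x:=b_i\in\bar A\setminus A$ satisfies either $x^2,x^3\in A$ or $x^p,px\in A$; assuming the hypotheses of part (2), namely semi-normality together with the $p$-condition, both alternatives are excluded, so $A=A^{\mathrm{wn}}$. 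For part (1) the same scheme applies once one knows the analogous statement \emph{for the semi-normalization}: a nontrivial subintegral extension $A\subseteq A^{\mathrm{sn}}$ must contain an elementary generator $x\notin A$ with $x^2,x^3\in A$, directly contradicting the hypothesis.

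The main obstacle is precisely this last input for part (1): the ``quadratic--cubic only'' refinement of \autoref{prop:CA_description_of_univ_homeo}, valid for subintegral (as opposed to merely weakly subintegral) extensions, which is the non-formal algebraic content going back to Swan and Traverso. I would establish it by passing to the conductor square $A\to A^{\mathrm{sn}}$, $A/\mathfrak c\to A^{\mathrm{sn}}/\mathfrak c$ with $\mathfrak c$ the conductor of $A$ in $A^{\mathrm{sn}}$, thereby reducing to an artinian quotient in which the triviality of the residue field extensions confines the new generators and lets one extract, after translating by an element of $A$, a generator $x$ with $x^2,x^3\in A$ but $x\notin A$.
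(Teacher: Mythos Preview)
The paper does not give its own proof of this proposition: it is quoted verbatim from Yanagihara with the citation \cite[\S 1]{Yanagihara_Weakly_normal_ring_extensions} and then used as a black box. So there is nothing to compare your argument against in the paper itself.

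That said, your outline is essentially the standard route to these characterizations. The forward implications are clean, and your use of \autoref{prop:CA_description_of_univ_homeo} for the reverse of part (2) is exactly right once one observes that $\Spec A^{\mathrm{wn}}\to\Spec A$ is a universal homeomorphism (finite, bijective, with purely inseparable residue extensions). The only place where you are not yet done is, as you correctly flag, the reverse of part (1): the existence of an elementary subintegral generator $x\notin A$ with $x^2,x^3\in A$ inside a nontrivial subintegral extension. Your conductor-square sketch is the right idea but is not a proof as written; the actual argument (Swan, building on Traverso) proceeds by a filtration of $A^{\mathrm{sn}}/A$ and a careful residue-field computation, and is where the content of the cited reference lies. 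If you want to make your write-up self-contained, that is the step to expand; otherwise, citing Yanagihara or Swan at that point is the honest thing to do, which is precisely what the paper does for the whole proposition.
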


Notice the similarity between the algebraic conditions for semi-normality and weak normality given above, and the algebraic description of universal homeomorphisms given in \autoref{prop:CA_description_of_univ_homeo}. We take advantage of this similarity in order to obtain the following factorization result:

\begin{proposition}\label{prop:factorization_in_weakly_normal_case}
Let $f\colon Y\to X$ be a morphism of excellent reduced schemes. Assume that:
    \begin{enumerate}
        \item each irreducible component of $Y$ dominates a component of $X$, and
        \item $Y$ is weakly normal.
    \end{enumerate}
Then $f$ factors through the weak normalization $w\colon X^\text{wn}\to X$.
\end{proposition}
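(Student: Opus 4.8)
The plan is to reduce to a purely algebraic statement and then exploit the parallel between \autoref{prop:CA_description_of_univ_homeo} and \autoref{prop:CA_characterization_of_weak_normality}. Since the weak normalization is local on the base and the factorization I will produce is canonical, it suffices to treat the affine case $X=\Spec A$, $Y=\Spec B$ with $A,B$ reduced excellent; write $\phi\colon A\to B$ for the map inducing $f$, and $A\subseteq A^{\mathrm{wn}}\subseteq\oA$ for the weak normalization sitting inside the normalization $\oA$. The goal then becomes to extend $\phi$ to a ring homomorphism $A^{\mathrm{wn}}\to B$.

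First I would construct a canonical map $\Phi\colon\oA\to\oB$ on normalizations extending $\phi$. Decompose $\oA=\prod_i\oA_i$ and $\oB=\prod_j\oB_j$ over the components of $X$ and of $Y$, writing $A_i$ (resp.\ $B_j$) for the quotient of $A$ (resp.\ $B$) by the corresponding minimal prime. Hypothesis (1) attaches to each component $j$ of $Y$ the unique component $\sigma(j)$ of $X$ that it dominates, and dominance yields an inclusion of fraction fields $\Frac(A_{\sigma(j)})\hookrightarrow\Frac(B_j)$ which carries $\oA_{\sigma(j)}$ into $\oB_j$ by integrality. Assembling these component-wise maps produces the ring homomorphism $\Phi$, and this is the only place where hypothesis (1) enters: without it one cannot even specify where elements of $\oA$ should be sent inside $\oB$.

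The heart of the argument is to show $\Phi(A^{\mathrm{wn}})\subseteq B$. Since the weak normalization $\Spec A^{\mathrm{wn}}\to\Spec A$ is a universal homeomorphism (\autoref{def:wk_and_semi_normality}), \autoref{prop:CA_description_of_univ_homeo} lets me write any $x\in A^{\mathrm{wn}}$ inside a subextension $A[b_1,\dots,b_n]\subseteq A^{\mathrm{wn}}$ with each $b_i$ satisfying either $b_i^2,b_i^3\in A[b_1,\dots,b_{i-1}]$, or $b_i^p,pb_i\in A[b_1,\dots,b_{i-1}]$ for some prime $p>0$. I then induct on $i$: assuming $\Phi(b_1),\dots,\Phi(b_{i-1})\in B$, the element $y:=\Phi(b_i)\in\oB$ satisfies either $y^2,y^3\in B$ or $y^p,py\in B$, and weak normality of $B$ (\autoref{prop:CA_characterization_of_weak_normality}) forces $y\in B$ in both cases. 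Hence $\Phi(x)\in B$. This is precisely the step where weak normality, rather than mere seminormality, is needed: the universal homeomorphism $A\subseteq A^{\mathrm{wn}}$ can produce chain steps of both types, and the two parts of \autoref{prop:CA_characterization_of_weak_normality} dispatch them in exact correspondence with the two cases of \autoref{prop:CA_description_of_univ_homeo}.

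This yields the desired extension $\Phi|_{A^{\mathrm{wn}}}\colon A^{\mathrm{wn}}\to B$ of $\phi$, i.e.\ the factorization in the affine case. Because $\Phi$ is functorial for localization (both normalization and weak normalization commute with restriction to opens, and the component assignment $\sigma$ is unaffected), the local factorizations agree on overlaps and glue to a morphism $Y\to X^{\mathrm{wn}}$ over $X$, as wanted. I expect the main obstacle to be organizing this globalization cleanly, namely checking that $\Phi$ is genuinely canonical and compatible with passing to affine opens of $X$ and their preimages in $Y$, so that the pieces patch; the algebraic core in the previous paragraph is immediate once the two propositions are placed side by side.
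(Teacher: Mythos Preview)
Your proposal is correct and follows essentially the same approach as the paper: construct the induced map on normalizations (you do it componentwise, the paper cites functoriality of normalization from the Stacks Project), then use \autoref{prop:CA_description_of_univ_homeo} to build an inductive chain $b_1,\dots,b_n$ generating any element of the weak normalization, and push through \autoref{prop:CA_characterization_of_weak_normality} step by step using weak normality of $Y$. The only organizational difference is that the paper works directly with sheaves over $X$ (showing $w_*\sO_{X^{\text{wn}}}\to f_*\sO_Y$ factors inside $(\nu_X)_*f^\nu_*\sO_{Y^\nu}$), which sidesteps the gluing you flag at the end; your affine-plus-glue route is equally valid since the map on normalizations is canonical.
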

\begin{proof}
To give a morphism $g\colon Y\to X^\text{wn}$ means giving a continuous map $|g|\colon |Y|\to |X^\text{wn}|$ of topological spaces, together with a map of sheaves $g^\sharp\colon \sO_{X^\text{wn}}\to g_*\sO_Y$. By definition $X^\text{wn}\to X$ is an homeomorphism, so we take $|g|=|f|$, and to conclude we need to show that the map $\sO_X\hookrightarrow f_*\sO_Y$ admits a factorization $\sO_X\to w_*\sO_{X^\text{wn}}\to f_*\sO_Y$.

Let $Y^\nu\to Y$ be the normalization of $Y$ and $\nu_X\colon X^\nu\to X$ the normalization of $X$. By \cite[035Q.(4)]{StacksProject} there is a commutative diagram
        $$\begin{tikzcd}
        Y^\nu \arrow[rr]\arrow[d, "f^\nu"] && Y\arrow[d, "f"] \\
        X^\nu\arrow[r] & X^\text{wn}\arrow[r, "w"]&  X
        \end{tikzcd}$$
which induces the commutative diagram of $\sO_X$-algebras
        $$\begin{tikzcd}
        (\nu_X)_*f^\nu_*\sO_{Y^\nu} && f_*\sO_Y\arrow[ll, hook] \\
        (\nu_X)_*\sO_{X^\nu}\arrow[u, hook] & w_*\sO_{X^\text{wn}}\arrow[l, hook]\arrow[ul, hook, "\pi"]& \sO_X\arrow[u, hook, "\pi|_{\sO_X}"]\arrow[l, hook].
        \end{tikzcd}$$

We claim that the arrow $\pi\colon w_*\sO_{X^\text{wn}}\to (\nu_X)_*f^\nu_*\sO_{Y^\nu}$ factors through $f_*\sO_Y$. Indeed, 
take $s\in w_*\sO_{X^\text{wn}}(U)$ for some affine open set $U$ of $X$. By \autoref{prop:CA_description_of_univ_homeo} we can find $t_1,\dots, t_n\in \sO_X(U)$ such that $s\in \sO_X(U)[t_1,\dots,t_n]$ where for every $i=1,\dots, n$ we have either
    \begin{enumerate}
        \item $t_i^2,t_i^3\in \sO_X(U)[t_1,\dots,t_{i-1}]$, or
        \item $t_i^p,pt_i\in \sO_X(U)[t_1,\dots,t_{i-1}]$ for some prime $p>0$.
    \end{enumerate}
Let $V=f^{-1}(U)$ and $V^\nu$ be its preimage in $Y^\nu$. Then we can say that $\pi(s)\in \sO_{Y^\nu}(V^\nu)$ belongs to $\sO_Y(V)[\pi(t_1),\dots \pi(t_n)]$ where for every $i=1,\dots, n$ we have either
    \begin{enumerate}
        \item $\pi(t_i)^2,\pi(t_i)^3\in \sO_Y(V)[\pi(t_1),\dots,\pi(t_{i-1})]$, or
        \item $\pi(t_i)^p,p\pi(t_i)\in \sO_Y(V)[\pi(t_1),\dots,\pi(t_{i-1})]$ for some prime $p>0$.
    \end{enumerate}
Since $Y$ is weakly normal, it follows from \autoref{prop:CA_characterization_of_weak_normality} (applied on an affine cover of $V$) that $\pi(t_1)\in \sO_Y(V)$. In turn, it follows that $\pi(t_2)\in \sO_Y(V)$, and by induction we arrive at the conclusion that all the $\pi(t_i)$'s belong to $\sO_Y(V)$. Therefore $\pi(s)$ belongs to $\sO_Y(V)=f_*\sO_Y(U)$. As $s$ was arbitrary, our claim is proved, and we are provided with the morphism $Y\to X^\text{wn}$ that we were looking for.
\end{proof}

\begin{corollary}\label{corollary:factorization_in_normal_case}
In the situation of \autoref{prop:factorization_in_weakly_normal_case}, if the normalization $X^\nu\to X$ is a universal homeomorphism, then $Y\to X$ factorizes through $X^\nu$.
\end{corollary}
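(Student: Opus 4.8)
The plan is to reduce everything to the identification $X^{\text{wn}}\cong X^\nu$ over $X$, after which the claim is immediate. By \autoref{prop:factorization_in_weakly_normal_case} the hypotheses already give a factorization $f\colon Y\to X^{\text{wn}}\xrightarrow{w}X$ through the weak normalization, so it suffices to factor $w$ through the normalization; composing with $Y\to X^{\text{wn}}$ then produces the desired map $Y\to X^\nu$.

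To obtain such a factorization I would check that the normalization $\nu_X\colon X^\nu\to X$ itself satisfies conditions (1)--(3) of \autoref{def:wk_and_semi_normality}. Condition (1) is automatic, and condition (2) holds because the normalization of a reduced excellent scheme is finite and surjective, and is an isomorphism over the generic points of $X$ (whose local rings are fields). For condition (3) I would invoke the hypothesis directly: a universal homeomorphism is in particular a homeomorphism and is universally injective, hence induces purely inseparable residue field extensions $k(x)\subset k(x')$. Thus $X^\nu\to X$ is itself of the type required in \autoref{def:wk_and_semi_normality}, so the universal property (4) of $X^{\text{wn}}$ yields a factorization $X^{\text{wn}}\to X^\nu\to X$.

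It remains to see that this identifies $X^{\text{wn}}$ with $X^\nu$. Condition (1) applied to $X^{\text{wn}}$ provides a map in the other direction $X^\nu\to X^{\text{wn}}$ over $X$, and the two composites are $X$-endomorphisms of $X^\nu$ and of $X^{\text{wn}}$ respectively. Since all the morphisms in sight are finite, hence affine and separated over $X$, since both schemes are reduced, and since the composites restrict to the identity over the dense generic points of $X$, the two composites must equal the respective identities. Therefore $X^{\text{wn}}\cong X^\nu$ over $X$, and the factorization of \autoref{prop:factorization_in_weakly_normal_case} is exactly the factorization $Y\to X^\nu\to X$ that we want.

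The only delicate point is this last identification, namely that the mutually inverse maps genuinely compose to identities. I would settle it by the standard fact that two $X$-morphisms from a reduced source into a scheme separated over $X$ which agree on a schematically dense open must coincide, applied here to the dense locus lying over the generic points of $X$; the separatedness needed is guaranteed by the finiteness (hence affineness) of both $\nu_X$ and $w$.
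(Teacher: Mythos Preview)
Your proof is correct and follows the same idea as the paper: both arguments reduce to the identification $X^{\text{wn}}=X^\nu$ when $X^\nu\to X$ is a universal homeomorphism, after which \autoref{prop:factorization_in_weakly_normal_case} gives the conclusion. The paper simply asserts this identification in one line, whereas you spell out the verification via the defining properties of the weak normalization; your added detail is accurate and the final ``mutually inverse'' step can in fact be shortened by noting that $X^{\text{wn}}\to X^\nu$ is a finite birational morphism of reduced schemes with normal target, hence an isomorphism.
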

\begin{proof}
Indeed, if $X^\nu\to X$ is a universal homeomorphism then $X^\nu$ is equal to the weak normalization of $X$.
\end{proof}

\begin{remark}
One can see \autoref{prop:factorization_in_weakly_normal_case} as a universal factorization property of the weak normalization, similar to that of semi-normalization (see \cite[I.7.2.3.3]{Kollar_Rational_curves}). However, one cannot drop the assumption that every component of $Y$ dominates a component of $X$. The following counter-example is given in \cite[I.7.2.2,I.7.2.3]{Kollar_Rational_curves}. Let $k$ be a field of characteristic $p>0$ and let $R_n=k[x^{p^n}, yx^i:\ 0\leq i\leq p^n-1]$. The weak normalization of $R_n$ is $k[x,y]$. If we quotient $R_n$ by the ideal $(yx^i: \ 0\leq i\leq p^n-1)$, we obtain the (weakly) normal ring $k[x^{p^n}]$. However there is no morphism $\alpha\colon \Spec k[x^{p^n}]\to \Spec k[x,y]$ such that the diagram
        $$\begin{tikzcd}
        & \Spec k[x^{p^n}]\arrow[d, hook]\arrow[dl, "\alpha"] \\
        \Spec k[x,y]\arrow[r] & \Spec R_n
        \end{tikzcd}$$
is commutative. (Indeed there is no valid choice for the image $\alpha^*(x)\in k[x^{p^n}]$.) 
\end{remark}

\subsection{Birational vanishing theorems for threefolds}

In this section we collect the vanishing theorems for a dlt modification of a log canonical threefold that we will need in the proof of \autoref{theorem_intro:main}  and \autoref{theorem_intro:semi_normality}. 

\begin{proposition}\label{mainvanish}
\cite[Prop. 14]{Arv22} Let $(X,\Delta)$ be a quasi--projective 
log canonical threefold without zero-dimensional log canonical centers and with closed points of perfect residue field of characteristics $p>5$. 
There exists a proper birational morphism $f\colon (X^\text{dlt}, \Delta_{X^\text{dlt}}+E)\to  (X,\Delta)$ 
 such that $E=\Exc(f)$ is the reduced exceptional divisor of $f$, the pair $(X^\text{dlt}, \Delta_{X^\text{dlt}}+E)$ is dlt, $f^*(K_X+\Delta)= K_X + \Delta_{X^\text{dlt}}+E$ and $R^1f_*\sO_X(-E)=0$.
\end{proposition}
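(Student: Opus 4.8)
The statement packages two logically separate assertions: the existence of the crepant dlt modification, and the vanishing $R^1f_*\sO_{X^{\text{dlt}}}(-E)=0$. The plan is to obtain the first from the minimal model program for threefolds in residue characteristic $p>5$, and to prove the second by localizing along the log canonical centers and invoking a relative vanishing theorem in the same characteristic range. The two hypotheses ``$p>5$'' and ``no zero-dimensional lc center'' will both turn out to be essential, and it is worth keeping track of where each is used.

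For existence and crepancy I would take a log resolution of $(X,\Delta)$ and run a suitable $(K+\Delta)$-MMP over $X$ extracting exactly the divisorial lc places; this is available for excellent threefolds whose closed points have perfect residue fields of characteristic $p>5$. The output is a $\bQ$-factorial dlt pair $(X^{\text{dlt}},\Delta_{X^{\text{dlt}}}+E)$ with $f$ crepant, so $K_{X^{\text{dlt}}}+\Delta_{X^{\text{dlt}}}+E=f^*(K_X+\Delta)$, and with $E=\Exc(f)$ reduced since $(X,\Delta)$ is log canonical (all extracted places have discrepancy $-1$). The decisive consequence of the absence of zero-dimensional lc centers is that each component of $E$ is an lc center and hence maps onto a curve; being an irreducible surface dominating a curve, no component of $E$ can be contracted to a point, so every fiber of $f$ has dimension $\le 1$. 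Therefore $R^{\ge 2}f_*(-)=0$, and $R^1f_*\sO_{X^{\text{dlt}}}(-E)$ is a coherent sheaf supported on the union $C$ of the curves $f(E)$, of codimension two in $X$.

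For the vanishing, the question is local on $X$ and the support lies over $C$, so I would first examine the generic point $\eta$ of each component of $C$. There $\sO_{X,\eta}$ is a two-dimensional excellent lc singularity, $f$ restricts to a crepant dlt modification of this surface germ with curve exceptional fiber, and $R^1f_*\sO(-E)=0$ becomes the classical surface statement: running $0\to\sO_{X^{\text{dlt}}}(-E)\to\sO_{X^{\text{dlt}}}\to\sO_E\to 0$, one uses surjectivity of $\sO_X\to f_*\sO_E$ together with injectivity of $R^1f_*\sO\to R^1f_*\sO_E$ for the minimal lc modification of a surface lc singularity. This holds because for $p\neq 2,3,5$ surface lc singularities are as tame as in characteristic zero. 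Consequently the support of $R^1f_*\sO_{X^{\text{dlt}}}(-E)$ shrinks to finitely many closed points of $C$, so it is a torsion sheaf.

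It remains to kill this residual torsion, and this is the main obstacle. I would use the crepant identity to write $\sO_{X^{\text{dlt}}}(-E)$ as a rounding of $\sO(K_{X^{\text{dlt}}}+\Delta_{X^{\text{dlt}}}-f^*(K_X+\Delta))$, pass to a klt pair by lowering the coefficient of $E$, and apply the relative Kawamata--Viehweg vanishing available for threefolds with $p>5$ (in the circle of ideas of Bernasconi--Kollár and Hacon--Witaszek). In the cleanest formulation, since $R^1$ is the top nonzero pushforward, relative Grothendieck duality presents $R^1f_*\sO_{X^{\text{dlt}}}(-E)$ as the dual of a pushforward of a divisorial sheaf from the normal variety $X^{\text{dlt}}$, hence as torsion-free; a torsion-free sheaf supported in codimension $\ge 2$ must vanish. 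This step is where $p>5$, perfect residue fields and the absence of zero-dimensional lc centers are all indispensable: they confine every local model to a curve-fibered surface germ, which is tame, and rule out the del~Pezzo-type surface fibers over a point on which Kawamata--Viehweg vanishing genuinely fails in small characteristic. This is a feature rather than a defect of the method, as witnessed by the characteristic-$3$ example constructed later in the paper, whose non-seminormal center is precisely a zero-dimensional one.
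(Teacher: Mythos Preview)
The paper does not supply a proof of this proposition: it is recorded as a quotation of \cite[Prop.~14]{Arv22} and used as a black box in the proof of \autoref{maintheorem}. There is therefore no argument in the present paper to compare your proposal against.

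Assessing your sketch on its own terms: the construction of the crepant dlt modification and the observation that every fiber of $f$ has dimension $\leq 1$ (whence $R^{\geq 2}f_*=0$) are correct and use the hypothesis on zero-dimensional lc centers in the right way. The genuine gap is in your last paragraph, where you offer two routes and complete neither. For the Kawamata--Viehweg route, you have $-E-K_{X^{\text{dlt}}}\equiv_f\Delta_{X^{\text{dlt}}}$; after subtracting a klt boundary $\Delta'=\Delta_{X^{\text{dlt}}}+(1-\epsilon)E$ you get $-E-(K_{X^{\text{dlt}}}+\Delta')\equiv_f-\epsilon E$, and there is no reason $-E$ is $f$-nef for an arbitrary dlt modification, so the standard relative statement does not apply without further work on the choice of model. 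For the Grothendieck-duality route, your assertion that $R^1f_*\sO(-E)$ is ``the dual of a pushforward of a divisorial sheaf, hence torsion-free'' tacitly requires $\omega_X^\bullet$ to be a sheaf in a single degree, i.e.\ that $X$ is Cohen--Macaulay. Log canonical threefolds are not Cohen--Macaulay in general; establishing Cohen--Macaulayness away from zero-dimensional lc centers in positive and mixed characteristic is itself a nontrivial input closely related to the vanishing you are trying to prove, so as written this step is circular. The actual argument in \cite{Arv22} exploits the one-dimensional fiber structure more carefully than either of your sketches indicates.
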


The above proposition will allow us to say something about \emph{minimal} log canonical centers. On the contrary, the proposition below (which takes the main result of \cite{Kawakami_KVV_for_log_CY_surfaces_in_large_char} as an essential input) will be applied to the study of the union of all log canonical centers for large residue characteristics. 

\begin{proposition}\cite[Prop. 15]{Arv22}\label{MainvanII} There exists an integer $p_0$ with the following property. Assume that $\Delta\geq 0$ is a boundary with standard coefficients. Let $(X,\Delta)$ be a quasi--projective log canonical threefold  with closed points of perfect residue fields, each of characteristic $p>p_0$.  Then there exists a proper birational morphism $f\colon (X^\text{dlt}, \Delta_{X^\text{dlt}}+E)\to  (X,\Delta)$ 
 such that $E=\Exc(f)$ is the reduced exceptional divisor of $f$, the pair $(X^\text{dlt}, \Delta_{X^\text{dlt}}+E)$ is dlt, $f^*(K_X+\Delta)= K_X + \Delta_{X^\text{dlt}}+E$ and $R^1f_*\sO_X(-E)=0$.
 \end{proposition}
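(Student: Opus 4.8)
The plan is to follow the proof of \autoref{mainvanish} as closely as possible, isolating the single place where the stronger hypotheses (standard coefficients and $p$ large) become indispensable. First I would construct the dlt modification $f\colon (X^\text{dlt},\Delta_{X^\text{dlt}}+E)\to(X,\Delta)$ by running an appropriate MMP; this is available as soon as $p_0\geq 5$, and it produces all the asserted structure with no constraint beyond $p>5$: the pair $(X^\text{dlt},\Delta_{X^\text{dlt}}+E)$ is dlt, the reduced divisor $E=\Exc(f)$ is the whole exceptional locus, and crepancy $f^*(K_X+\Delta)=K_{X^\text{dlt}}+\Delta_{X^\text{dlt}}+E$ holds. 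The genuine content is therefore the vanishing $R^1f_*\sO(-E)=0$. Since higher direct images may be computed locally on $X$ — after completing at a closed point via the theorem on formal functions — it suffices to establish the vanishing fibrewise over the lc centers of $(X,\Delta)$, which are precisely the loci over which $f$ is not an isomorphism.

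Crepancy rewrites the sheaf as $-E-K_{X^\text{dlt}}\sim_{\bQ,f}\Delta_{X^\text{dlt}}$, so that $(X^\text{dlt},\Delta_{X^\text{dlt}})$ is klt and $-E$ is $K_{X^\text{dlt}}+\Delta_{X^\text{dlt}}$ up to an $f$-numerically-trivial class: this is exactly the setting of relative Kawamata--Viehweg vanishing in the boundary-trivial case. Over the lc centers of positive dimension the fibres of $f$ are at most one-dimensional, and there the vanishing is obtained verbatim as in \autoref{mainvanish}, using only $p>5$; no new input is needed, and indeed the \emph{no zero-dimensional lc center} assumption of \autoref{mainvanish} is used nowhere else. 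The one new case is a zero-dimensional lc center $x\in X$, over which the fibre $S:=f^{-1}(x)_{\mathrm{red}}\subset E$ is a full surface, necessarily normal because it is a divisorial lc center of a dlt pair.

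On this surface I would run the surface theory. By adjunction the pair $(S,\Delta_S)$, with $\Delta_S$ the different induced on $S$, satisfies $K_S+\Delta_S=(K_{X^\text{dlt}}+\Delta_{X^\text{dlt}}+E)|_S=f^*(K_X+\Delta)|_S\equiv 0$, since $f$ contracts $S$ to the point $x$; thus $(S,\Delta_S)$ is a log canonical log Calabi--Yau surface. Because $\Delta$ has standard coefficients, so does $\Delta_S$ — standard coefficients are preserved under adjunction — and this is precisely what licenses the use of Kawakami's Kawamata--Viehweg vanishing for log Calabi--Yau surfaces \cite{Kawakami_KVV_for_log_CY_surfaces_in_large_char}, which holds once $p>p_0$. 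Applying it to the restriction of $-E$ to $S$ yields the vanishing of the relevant $H^1$ on $S$, and propagating along the infinitesimal thickenings $E_n$ and feeding the result back into the theorem on formal functions gives $R^1f_*\sO(-E)=0$ at $x$.

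The hard part will be the reduction in the last paragraph, namely matching the three-dimensional relative vanishing with the two-dimensional statement that \cite{Kawakami_KVV_for_log_CY_surfaces_in_large_char} supplies. One must verify that $(-E)|_S$ lands in the numerical range to which log Calabi--Yau surface vanishing applies, that the different $\Delta_S$ genuinely has standard coefficients so the large-$p$ hypothesis is met, and — most delicately in positive characteristic — that the vanishing on the reduced surface $S$ propagates through the inverse system of thickenings $\{E_n\}$ to annihilate the entire formal higher direct image, rather than merely its first graded piece.
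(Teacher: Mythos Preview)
The paper does not actually give a proof of this proposition: it is quoted verbatim from \cite[Prop.~15]{Arv22}, with no argument supplied here. The only hint the paper offers is the parenthetical remark that the result ``takes the main result of \cite{Kawakami_KVV_for_log_CY_surfaces_in_large_char} as an essential input.'' So there is nothing to compare your proposal against in this paper.

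That said, your sketch is broadly in line with that hint: you correctly identify that the proof should run parallel to \autoref{mainvanish}, with Kawakami's log Calabi--Yau surface vanishing replacing whatever argument handled the absence of zero-dimensional lc centers there. A few of your reductions are stated too optimistically, however. The fibre $f^{-1}(x)_{\mathrm{red}}$ over an isolated lc center need not be a single normal surface: $E$ can have several components meeting over $x$, so $S$ is in general a reducible (at best demi-normal) surface, and you will need to work component by component or on the normalization, tracking the different carefully. Your claim that standard coefficients are preserved under adjunction also deserves care --- adjunction on dlt threefolds in characteristic $p>5$ produces coefficients in the standard set provided one starts there, but this uses the classification of surface lc singularities and should be cited rather than asserted. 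Finally, the passage from vanishing on the reduced fibre to vanishing of the formal $R^1$ via thickenings is, as you yourself flag, the genuinely delicate step; the conormal sheaves of the thickenings need to be controlled, and this is where the actual work in \cite{Arv22} presumably lies.
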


With the factorization result \autoref{corollary:factorization_in_normal_case}, the vanishing theorems \autoref{mainvanish} and \autoref{MainvanII} at hand, we are ready to prove \autoref{theorem_intro:main} and \autoref{theorem_intro:semi_normality}. We do so in \autoref{section:normality}. 

\subsection{Some generalities on cones}
For the convenience of the reader, we recall some basic facts about cones. The results of this section will be used in the construction of a non semi-normal log canonical center \autoref{section:counterexample}, as well as in \autoref{section:sufficient_conditions}. Cone constructions for Cartier polarizations are reviewed in \cite[\S 3.1]{Kollar_Singularities_of_the_minimal_model_program}. In addition to what is presented below, see also \cite[\S 2.3]{Bernasconi_Kawamata-Viehweg_vanishing_fails} for the $\bQ$-Cartier case.

Let $k$ be an arbitrary field and let $X$ be a normal connected excellent proper $k$-scheme. Let $A$ be a $\bQ$-Cartier $\bZ$-divisor on $X$ that is ample. We define the \textbf{absolute affine cone} of $X$ with respect to $A$ to be
        $$\aC(X,A)=\Spec_k \bigoplus_{d\geq 0}H^0(X,\sO(dA)).$$
We define the \textbf{relative affine cone} of $X$ with respect to $A$ to be
        $$\aBC(X,A)=\Spec_X \bigoplus_{d\geq 0}\sO(dA)$$
where the $\sO_X$-algebra structure on $\bigoplus_{d}\sO(dA)$ is given by the reflexified tensor product. We have a diagram
        \begin{equation}\label{diagram:cones}
        \begin{tikzcd}
        \aBC(X,A)\arrow[r, "f"]\arrow[d, "\rho"] & \aC(X,A) \\
        X &
        \end{tikzcd}
        \end{equation}
The absolute cone $\aC(X,A)$ has a closed point $v$, called its \textbf{vertex} defined by the irrelevant ideal $\bigoplus_{d>0}H^0(X,\sO(dA))$. The morphism $\rho$ has a section $X^-$, defined by the ideal $\bigoplus_{d>0}\sO(dA)$ of $\sO_{\aBC(X,A)}$. The following claim is standard.

\begin{claim}\label{propertiesofcone}
With the notations as above:
    \begin{enumerate}
        \item $\rho$ is of finite type of relative dimension one, and over a big open subset of $X$ is it an $\bA^1$-bundle;
        \item $\aBC(X,A)$ is normal and $X^-$ is $\bQ$-Cartier;
        \item $\aC(X,A)$ is affine, connected, normal and of finite type over $k$;
        \item the morphism $f$ induces an isomorphism 
    $\aBC(X,A)\setminus X^-\cong \aC(X,a)\setminus \{v\}$.
        \item $\aBC(X,A)\setminus X^-\cong \Spec_X\bigoplus_{d\in\bZ}\sO(dA)$.
    \end{enumerate}
\end{claim}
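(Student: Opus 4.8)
The plan is to prove the five items of \autoref{propertiesofcone} by reducing everything to the well-understood case of a Cartier polarization. Since $A$ is ample and $\bQ$-Cartier, there is a positive integer $m$ such that $mA$ is Cartier and very ample. The key technical point throughout will be the control of the reflexive (saturated) Veronese-type algebra $\bigoplus_{d\geq 0}\sO(dA)$ via its Cartier sub-algebra $\bigoplus_{e\geq 0}\sO(emA)$, together with the fact that $X$ is normal so that reflexive sheaves are determined in codimension one.

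\medskip

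First I would establish (1). Over the big open $U\subset X$ where $A$ is Cartier (its complement has codimension $\geq 2$ since $A$ is $\bQ$-Cartier on a normal variety), the sheaf of algebras $\bigoplus_{d\geq 0}\sO(dA)|_U$ is, locally where $A$ trivializes, a polynomial algebra $\sO_U[t]$, so $\rho^{-1}(U)\to U$ is an $\bA^1$-bundle; finite type and relative dimension one then follow since these can be checked after passing to the Cartier Veronese subalgebra, over which $\aBC(X,A)$ is finite. For (2), normality of $\aBC(X,A)$ I would deduce from Serre's criterion: the algebra $\bigoplus_d\sO(dA)$ is reflexive, hence $S_2$, by construction (reflexified tensor products), and regularity in codimension one holds because in codimension one on $\aBC$ we are either over the big open $U$ (where we have a smooth $\bA^1$-bundle) or at the generic points of $X^-$ (where the local structure is again that of a Cartier cone). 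The $\bQ$-Cartier claim for $X^-$ reduces to observing that $mX^-$ is Cartier, cut out locally by the degree-one generator of the Cartier Veronese algebra.

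\medskip

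For (3), I would use that $\aC(X,A)=\Spec_k\bigoplus_{d\geq 0}H^0(X,\sO(dA))$ is affine by definition; finite type over $k$ follows because the section ring is finitely generated (as $A$ is ample and $X$ is proper and excellent, one passes to the finitely generated Cartier Veronese subring $\bigoplus_e H^0(X,\sO(emA))$ and uses finiteness of the full ring over it); connectedness follows from the presence of the degree-zero part $H^0(X,\sO_X)$, which is a field as $X$ is connected and normal (hence geometrically the cone is connected through the vertex); and normality follows from the normality of $\aBC(X,A)$ together with the fact that $f$ is proper birational with $f_*\sO_{\aBC}=\sO_{\aC}$, so $\aC$ inherits $S_2$ and $R_1$. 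For (4) and (5), the isomorphism away from the vertex and $X^-$ is the standard computation: inverting a degree-one element trivializes the grading, giving $\aBC(X,A)\setminus X^-\cong\Spec_X\bigoplus_{d\in\bZ}\sO(dA)$, which in turn maps isomorphically to $\aC(X,A)\setminus\{v\}$ because over the punctured cone the $\bG_m$-action is free and the morphism $f$ is an isomorphism there.

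\medskip

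The main obstacle I anticipate is the careful bookkeeping of the $\bQ$-Cartier (non-Cartier) structure: none of these statements are hard in the Cartier case, but with a genuinely $\bQ$-Cartier $A$ one must consistently argue that the relevant properties (finite generation, $S_2$-ness, the $R_1$ condition, the affine-bundle structure) hold either after restriction to the big open $U$ or after passage to the Cartier Veronese subalgebra $\bigoplus_e\sO(emA)$, and then descend. The reflexivity of the tensor products is what makes this work, but verifying that the algebra is $S_2$ and regular in codimension one at the generic points of $X^-$ — precisely the locus where the non-Cartier behavior is concentrated — is the delicate step, and I would handle it by the local analytic description of the cone over a Cartier divisor after the Veronese twist, referring to \cite[\S 2.3]{Bernasconi_Kawamata-Viehweg_vanishing_fails} and \cite[\S 3.1]{Kollar_Singularities_of_the_minimal_model_program} for the foundational computations.
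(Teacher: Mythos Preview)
The paper gives no proof of this claim at all: it simply says ``The following claim is standard'' and moves on. So your proposal is already more detailed than what the paper offers, and your overall strategy---reduce to the Cartier case via the Veronese subalgebra $\bigoplus_e\sO(emA)$, and use reflexivity/$S_2$-ness to control what happens in codimension $\geq 2$---is the correct one and is exactly what is implicit in the references you cite.

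Two small points worth tightening. First, in your argument for (5) you speak of ``inverting a degree-one element'', but over the non-Cartier locus of $A$ there need not be a local degree-one generator; the clean statement is that locally on $X$ one inverts a degree-$m$ generator $t$ of the Cartier sheaf $\sO(mA)$, and then one checks that $D(t)=\aBC(X,A)\setminus X^-$ (any positive-degree element $s$ satisfies $s^m\in t^d\cdot\sO_X$, so $D(s)\subseteq D(t)$). Second, in your argument for normality of $\aC(X,A)$ you invoke that $f$ is proper, but you have not established this; it is true, but it requires either a valuative-criterion check or passing through a projective completion. Alternatively one can argue normality of the section ring directly: its normalization is graded, and in each degree is contained in $H^0(X,\sO(dA))$ by normality of $X$, hence equals it. Neither issue is serious, and since the paper itself omits the proof entirely, your outline is more than adequate.
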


\noindent Let $E\subset X$ be a prime divisor (not necessarily $\bQ$-Cartier). We define a prime divisor $E_{\aC(X,A)}\subset \aC(X,A)$ by $E_{\aC(X,A)}=f_*\rho^{-1}E$. We also define a prime divisor $E_{\aBC(X,A)}\subset \aBC(X,A)$ by $E_{\aBC(X,A)}=\red(\rho^{-1}E)$. Then
    \begin{equation*}
    \sI_{E_{\aBC(X,A)}}=\bigoplus_{d\geq 0}\sO_X(dA-E)\hookrightarrow \bigoplus_{d\geq 0}\sO_X(dA)=\mathcal{O}_{\aBC(X,A)}
    \end{equation*}
is the (graded) ideal defining $E_{\aBC(X,A)}$ inside $\aBC(X,A)$.

Consider for all $d\in \bZ$ the exact sequence of $\mathcal{O}_X$-modules
        \begin{equation}\label{eqn:restrictrion_exact_seq}
            0\to \sO_X(dA-E)\to \sO_X(dA)\to\sE_d\to 0,
        \end{equation}
where $\sE_d$ is defined by the exactness of the sequence. Then $\sE_d$ is a rank one $S_1$-sheaf supported on $E$ \cite[Lemma 2.60]{Kollar_Singularities_of_the_minimal_model_program}, and hence a torsion free $\sO_E$-module \cite[Proposition 2.6]{Schwede2008GENERALIZEDDA}. 

\begin{claim}\label{claim:algebra_structure_on_restriction}
The direct sum $\bigoplus_{d\in\bZ}\sE_d$ has a structure of $\bZ$-graded $\sO_E$-module.
\end{claim}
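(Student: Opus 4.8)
The plan is to avoid building the module structure by hand, and instead to realize $\bigoplus_{d\in\bZ}\sE_d$ as a graded quotient of a sheaf of $\bZ$-graded algebras, so that both the grading and the $\sO_E$-action are inherited. I would set $\sR=\bigoplus_{d\in\bZ}\sO_X(dA)$ and $\sJ=\bigoplus_{d\in\bZ}\sO_X(dA-E)$. By \autoref{propertiesofcone}(5), $\sR$ is already a $\bZ$-graded sheaf of $\sO_X$-algebras (the structure sheaf of $\aBC(X,A)\setminus X^-$), its multiplication being the reflexified product of sections of divisorial sheaves on the normal variety $X$. Since the exact sequence \eqref{eqn:restrictrion_exact_seq} presents $\sE_d$ as the cokernel $\sO_X(dA)/\sO_X(dA-E)$, there is a tautological identification of graded $\sO_X$-modules $\sR/\sJ=\bigoplus_{d\in\bZ}\sE_d$. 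It then suffices to show that $\sJ$ is a graded ideal of $\sR$, for then the quotient inherits the structure of a $\bZ$-graded sheaf of algebras.

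The key step is therefore the inclusion
$$\sO_X(dA)\cdot\sO_X(d'A-E)\subseteq \sO_X\big((d+d')A-E\big)$$
of subsheaves of the sheaf of rational functions on $X$, valid for all $d,d'\in\bZ$. I would check this on the big open subset $\iota\colon U=X_{\reg}\hookrightarrow X$, whose complement has codimension at least two because $X$ is normal, and over which every Weil divisor is Cartier, so that all three divisorial sheaves are invertible. There, if $\phi$ is a local section of $\sO_U(dA)$ and $\psi$ one of $\sO_U(d'A-E)$, then adding the inequalities $\opdiv(\phi)+dA\geq 0$ and $\opdiv(\psi)+d'A-E\geq 0$ gives $\opdiv(\phi\psi)+(d+d')A-E\geq 0$, so $\phi\psi$ is a section of $\sO_U((d+d')A-E)$. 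Since all the divisorial sheaves in sight are reflexive, hence $S_2$, each equals $\iota_*$ of its restriction to $U$; pushing the multiplication map forward along $\iota$ and using $\codim_X(X\setminus U)\geq 2$ then extends the displayed inclusion from $U$ to all of $X$. This shows $\sR\cdot\sJ\subseteq\sJ$, i.e. that $\sJ$ is a graded ideal.

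Granting this, $\sR/\sJ=\bigoplus_{d\in\bZ}\sE_d$ is a $\bZ$-graded sheaf of $\sO_X$-algebras, whose structure maps $\sE_d\otimes\sE_{d'}\to\sE_{d+d'}$ are induced by the reflexive products on $\sR$. Because each $\sE_d$ is an $\sO_E$-module — equivalently, $\sI_E$ annihilates it, as recorded just after \eqref{eqn:restrictrion_exact_seq} — this $\sO_X$-algebra structure factors through $\sO_E$, so $\bigoplus_{d\in\bZ}\sE_d$ is in fact a $\bZ$-graded sheaf of $\sO_E$-algebras, and in particular the $\bZ$-graded $\sO_E$-module asserted in the claim. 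I expect the main obstacle to be precisely the ideal property of the second paragraph: because $E$ is not assumed $\bQ$-Cartier, one cannot simply twist $\sR$ by a line bundle $\sO_X(-E)$, and one is forced to argue with reflexive (divisorial) sheaves and to extend the multiplication across the codimension-two locus where $A$ or $E$ fails to be Cartier. Everything else reduces, after restriction to $U$, to an elementary manipulation of principal divisors, which should cause no difficulty.
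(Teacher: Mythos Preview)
Your proposal is correct and follows essentially the same approach as the paper: realize $\bigoplus_{d\in\bZ}\sE_d$ as the quotient of the graded $\sO_X$-algebra $\bigoplus_{d\in\bZ}\sO_X(dA)$ by the graded submodule $\bigoplus_{d\in\bZ}\sO_X(dA-E)$, and check that the latter is an ideal via the reflexified product. You supply more detail than the paper (which simply asserts that the product of $\sO(dA-E)$ and $\sO(eA)$ lands in $\sO((d+e)A-E)$), and you correctly observe that the quotient is in fact a graded $\sO_E$-algebra, which is what is actually used later.
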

\begin{proof}
This direct sum is the quotient of the $\sO_X$-module $\sA=\bigoplus_{d\in\bZ} \sO_X(dA)$ by the graded sub-module $\sI=\bigoplus_{d\in\bZ}\sO_X(dA-E)$. Since $A$ is a $\bZ$-divisor, the module $\sA$ obtains a graded $\sO_X$-algebra structure via the pairing
        $$\sO(dA)\otimes \sO(eA)\to \sO(dA)[\otimes]\sO(eA)=\sO((d+e)A).$$
So we only have to show that $\sI$ is a graded ideal of $\sA$. We observe that the product structure takes $\sO(dA-E)\otimes\sO(eA)$ to $\sO((d+e)A-E)$, so $\sI$ is indeed an ideal.
\end{proof}


       
       Similarly, $$I_{E_{\aC(X,A)}}=\bigoplus_{d\geq 0}H^0(X,\sO(dA-E))$$ is the (graded) ideal defining $E_{\aC(X,A)}$ inside $\aC(X,A)$
and the cokernel $\sO_{E_{\aC(X,A)}}$ is by definition the $k$-algebra $\bigoplus_{d\geq 0}\im\psi_d$, where    
    $$\psi_d\colon H^0(X,\sO(dA))\longrightarrow H^0(E,\sE_d)$$ 
is the map on cohomology induced by \autoref{eqn:restrictrion_exact_seq}.

\begin{proposition}\label{prop:S_2_at_vertex}
 $E_{\aC(X,A)}$ is $S_2$ at the vertex if and only the map $H^1(X,\sO(dA-E))\to H^1(X,\sO(dA))$ induced by \autoref{eqn:restrictrion_exact_seq} is injective for every $d\geq 0$.
\end{proposition}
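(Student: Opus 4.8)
The plan is to compute the local cohomology of the coordinate ring $R_\bullet=\bigoplus_{d\geq 0}\im\psi_d$ of $E_{\aC(X,A)}$ at the vertex $v$, which corresponds to the irrelevant maximal ideal $\mathfrak{m}=\bigoplus_{d>0}\im\psi_d$. Since $E_{\aC(X,A)}$ is integral (it is a prime divisor) of dimension $\dim X\geq 2$ in the relevant cases, being $S_2$ at $v$ amounts to $\depth_{\mathfrak m}R_\bullet\geq 2$, i.e. to the vanishing $H^0_{\mathfrak m}(R_\bullet)=H^1_{\mathfrak m}(R_\bullet)=0$. The first group vanishes because $R_\bullet$ is a domain of positive dimension, so the whole statement reduces to the vanishing of $H^1_{\mathfrak m}(R_\bullet)$.

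To access $H^1_{\mathfrak m}$, I would use the local cohomology exact sequence attached to the open immersion $U:=E_{\aC(X,A)}\setminus\{v\}\hookrightarrow E_{\aC(X,A)}$. Because $E_{\aC(X,A)}$ is affine, this yields $H^1_{\mathfrak m}(R_\bullet)=\coker\!\big(R_\bullet\to H^0(U,\sO_U)\big)$, with the displayed map injective (recovering $H^0_{\mathfrak m}=0$). The key geometric input is then to identify $H^0(U,\sO_U)$. Using \autoref{propertiesofcone}(4),(5), over the punctured cone the projection to $X$ restricts to a $\mathbb{G}_m$-bundle over $E$ whose total space is $\Spec_E\bigoplus_{d\in\bZ}\sE_d$: one checks that the reduced preimage of $E$ has structure sheaf $\bigoplus_d\sE_d$ (with its graded algebra structure from \autoref{claim:algebra_structure_on_restriction}), using that $\sE_d=\sO_X(dA)\otimes\sO_E$ and that this algebra is generically the $\mathbb{G}_m$-bundle algebra, hence reduced. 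As the projection is affine, $H^0(U,\sO_U)=\bigoplus_{d\in\bZ}H^0(E,\sE_d)$, and the comparison map from $R_\bullet$ is the degreewise inclusion $\im\psi_d\hookrightarrow H^0(E,\sE_d)$ in degrees $d\geq 0$ and the zero map in negative degrees.

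Consequently $H^1_{\mathfrak m}(R_\bullet)=0$ if and only if $\im\psi_d=H^0(E,\sE_d)$ for every $d\geq 0$ and $H^0(E,\sE_d)=0$ for every $d<0$. I would dispatch the negative-degree condition unconditionally: since $A$ is ample and $E$ is integral, $A|_E$ is ample, and a nonzero section of the torsion-free rank-one sheaf $\sE_d$ would pull back along the normalization $\nu\colon\overline{E}\to E$ to a nonzero section of a rank-one reflexive sheaf on $\overline{E}$ agreeing generically with $d\,\nu^*(A|_E)$, which for $d<0$ is anti-ample and thus has no sections. Hence only the conditions $\im\psi_d=H^0(E,\sE_d)$, $d\geq 0$, survive, matching exactly the range in the statement.

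Finally, I would translate surjectivity of $\psi_d$ into the asserted injectivity. The long exact cohomology sequence of \autoref{eqn:restrictrion_exact_seq} gives $\coker\psi_d\cong\ker\!\big(H^1(X,\sO(dA-E))\to H^1(X,\sO(dA))\big)$, so $\im\psi_d=H^0(E,\sE_d)$ if and only if this map is injective. Combining with the previous paragraph yields the equivalence. The main obstacle, and the step deserving the most care, is the identification of $H^0(U,\sO_U)$ with $\bigoplus_{d\in\bZ}H^0(E,\sE_d)$: one must verify that the reduced preimage of $E$ in the punctured cone really is the relative $\mathbb{G}_m$-bundle $\Spec_E\bigoplus_d\sE_d$, so that no spurious functions or torsion contribute, and that the negative-degree part genuinely vanishes. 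Everything else is a formal consequence of the two exact sequences.
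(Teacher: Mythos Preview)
Your argument is essentially the same as the paper's: both compute $H^1_v$ via the restriction map $H^0(E_{\aC(X,A)},\sO)\to H^0(E_{\aC(X,A)}\cap U,\sO)$, identify the target as $\bigoplus_{d\in\bZ}H^0(E,\sE_d)$ through the description of the punctured cone, and then translate surjectivity of $\psi_d$ into injectivity of $\alpha_d$ via the long exact sequence of \autoref{eqn:restrictrion_exact_seq}. Two small cautions: your identity $\sE_d=\sO_X(dA)\otimes\sO_E$ only holds where $E$ is Cartier (in general $\sE_d$ is just the torsion-free rank-one $\sO_E$-module defined as a cokernel), and for the vanishing in negative degrees the paper instead uses the $n$-fold product map $H^0(E,\sE_d)\hookrightarrow H^0(E,\sE_{nd})$ with $n$ a multiple of the Cartier index of $A$, which avoids passing to the normalization of $E$.
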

\begin{proof}
Let $U$ be the complement of the vertex $v\in\aC(X,A)$. Applying \cite[III.exerc.2.3]{Hartshorne_Algebraic_Geometry} to $\sO_{E_{\aC(X,L)}}=\sO$, we find the exact sequence
    $$H^0\left(E_{\aC(X,L)},\sO\right)\to H^0\left(E_{\aC(X,L)}\cap U, \sO\right)\to H^1_v\left(E_{\aC(X,L)},\sO\right)\to H^1\left(E_{\aC(X,L)},\sO\right)=0.$$
So $E_{\aC(X,L)}$ is $S_2$ if and only if $H^0(E_{\aC(X,L)},\sO)\to H^0(E_{\aC(X,L)}\cap U, \sO)$ is surjective. Now $H^0(E_{\aC(X,L)},\sO)=\bigoplus_{d\geq 0}\im\psi_d$ as we have seen above. The isomorphism $U\cong \Spec_X\bigoplus_{d\in\bZ}\sO(dA)$ induces an isomorphism $E_{\aC(X,A)}\cap U\cong \Spec_X \bigoplus_{d\in\bZ}\sE_d$. So the restriction $\pi$ of $\rho$ to $E_{\aC(X,A)}\cap U$ is affine, and 
we obtain
        $$H^0(E_{\aC(X,L)}\cap U,\sO)
        =H^0(X,\pi_*\sO_{E_{\aBC}\cap U})
        = H^0(X,\bigoplus_{d\in\bZ}\sE_d)
        =\bigoplus_{d\in\bZ}H^0(E,\sE_d),
        $$
where the last equality comes from that the sheaves $\sE_d$ are supported on $E$.
        
Therefore, the morphism $H^0(E_{\aC(X,L)},\sO)\to H^0(E_{\aC(X,L)}\cap U, \sO)$ is the direct sum 
        $$\bigoplus_{d\in\bZ} \left[\im\psi_d\hookrightarrow H^0(E,\sE_d)\right].$$
In other words, $E_{\aC(X,A)}$ is $S_2$ at the vertex if and only if $\psi_d$ is surjective for every $d\in \bZ$. Since we have the exact sequences
        $$H^0(X,\sO(dA))\overset{\psi_d}{\longrightarrow}H^0(E,\sE_d)
        \to H^1(X,\sO(dA-E))\overset{\alpha_d}{\longrightarrow} H^1(X,\sO(dA))$$
induced by \autoref{eqn:restrictrion_exact_seq},
we see that $\psi_d$ is surjective if and only if $\alpha_d$ is injective. 

Finally, we claim that the group $H^0(E,\sE_d)$ is trivial for $d<0$, which implies that $\alpha_d$ is injective for every $d< 0$. This will concludes the proof.

Fix $d<0$. For $n>0$ divisible enough, the sheaf $\sE_{nd}$ is the restriction of an invertible sheaf whose dual is very ample. Thus $\sE_{nd}^{-1}$ is ample, and so $H^0(E,\sE_{nd})=0$. Now consider the $n$-fold product
        $$\sE_{d}\longrightarrow\sE_{nd},\quad s\mapsto s^{\bullet n}$$
induced by the structure of graded algebra of $\bigoplus_{d\in\bZ}\sE_d$ (see \autoref{claim:algebra_structure_on_restriction}). Since generically along $E$ the sheaf $\sO_X(A)$ is invertible, this $n$-fold product is generically an isomorphism. Since the $\sE_i$'s are torsion-free, this implies that the induced product on global sections
        $$H^0(E,\sE_{d})\longrightarrow H^0(E,\sE_{nd}),\quad s\mapsto s^{\bullet n}$$
is injective. But $H^0(E,\sE_{nd})$ is trivial, so $H^0(E,\sE_d)$ is also trivial.
\end{proof}

\begin{lemma}\label{lemma:psi_eventually_surjects}
$\psi_d$ is surjective for $d\gg 0$.
\end{lemma}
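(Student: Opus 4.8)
The plan is to deduce the surjectivity of $\psi_d$ from the vanishing of $H^1(X,\sO_X(dA-E))$ for $d\gg 0$, via the long exact cohomology sequence attached to \autoref{eqn:restrictrion_exact_seq}. That sequence reads
        $$H^0(X,\sO(dA))\overset{\psi_d}{\longrightarrow}H^0(E,\sE_d)\longrightarrow H^1(X,\sO_X(dA-E)),$$
so $\coker\psi_d$ injects into $H^1(X,\sO_X(dA-E))$. Hence it suffices to prove that this last group vanishes for all $d$ large enough, and the whole statement is reduced to a Serre-type vanishing.

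The subtlety is that $A$ is only $\bQ$-Cartier, so $\sO_X(dA-E)$ is a divisorial (reflexive rank one) sheaf rather than a line bundle, and one cannot feed the family $\{dA-E\}_{d}$ directly into Serre vanishing. To get around this I would fix $r>0$ with $L:=\sO_X(rA)$ an ample line bundle, which exists since $A$ is ample and $\bQ$-Cartier. Writing $d=qr+s$ with $0\le s<r$, I would use the isomorphism
        $$\sO_X(dA-E)\cong \sO_X(sA-E)\otimes L^{\otimes q},$$
which holds because tensoring a divisorial sheaf by a line bundle again yields the divisorial sheaf of the sum of the divisors (the two sheaves are reflexive and agree in codimension one, hence everywhere). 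This replaces the awkward $\bQ$-Cartier family by finitely many fixed coherent sheaves $\sF_s:=\sO_X(sA-E)$, $s=0,\dots,r-1$, twisted by powers of the honest ample line bundle $L$.

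Now I would invoke Serre vanishing: since $X$ is proper over $k$ and carries the ample line bundle $L$, it is projective, so for each $s$ there is $q_s$ with $H^1(X,\sF_s\otimes L^{\otimes q})=0$ whenever $q\ge q_s$. Setting $Q=\max_s q_s$, every $d\ge (Q+1)r$ has $q=\lfloor d/r\rfloor\ge Q\ge q_s$, whence $H^1(X,\sO_X(dA-E))=0$ uniformly over the residue $s$. Combined with the first paragraph this gives $\coker\psi_d=0$, i.e. $\psi_d$ is surjective for $d\gg 0$. The only point that genuinely requires care is the reduction from the $\bQ$-Cartier divisor $A$ to the ample line bundle $L$, namely the reflexive identity $\sO_X(dA-E)\cong\sO_X(sA-E)\otimes L^{\otimes q}$; once this is in place, everything else is formal from the long exact sequence and Serre vanishing on a projective scheme.
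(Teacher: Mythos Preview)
Your argument is correct and follows the same route as the paper: both reduce the surjectivity of $\psi_d$ to the vanishing of $H^1(X,\sO_X(dA-E))$ for $d\gg 0$, then handle the $\bQ$-Cartier issue by fixing an index $r$ (the paper takes the Cartier index $m$) so that $\sO_X(rA)$ is a line bundle, splitting the problem into finitely many residue classes modulo $r$, and applying Serre vanishing to each. Your write-up is in fact slightly more careful than the paper's in making the reflexive identification $\sO_X(dA-E)\cong\sO_X(sA-E)\otimes L^{\otimes q}$ explicit.
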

\begin{proof}
Say that $m$ is the Cartier index of $A$. Then $\sE_{d+m}=\sE_d\otimes\sO_E((mA)|_E)$. For every $i=0,\dots,m-1$, by Serre vanishing for the ample Cartier divisor $mA$ there exists $n_i>0$ such that $H^1(T,\sO((nm+i)A-E))=0$ for every $n\geq n_i$. If $N=\max\{n_i\}$ this implies that
        $$\psi_{nm+i}\colon H^0(T,\sO((nm+i)A))\to H^0(E,\sE_{nm+i}) \text{ is surjective }\forall n\geq N, i=0,\dots,m-1.$$
In other words, $\im\psi_d=H^0(E,\sE_d)$ for $d\geq Nm$.
\end{proof}

\begin{corollary}\label{corollary:partial_normalization_cone}
The morphism $\Spec_k \bigoplus_{d\geq 0}H^0(E,\sE_d)\to E_{\aC(X,A)}$ is a finite birational homeomorphism inducing equalities as extension of residue fields.
\end{corollary}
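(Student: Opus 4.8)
The plan is to identify the morphism with the inclusion of $\bZ_{\ge 0}$-graded $k$-algebras
$$R:=\bigoplus_{d\ge 0}\im\psi_d\hookrightarrow S:=\bigoplus_{d\ge 0}H^0(E,\sE_d),$$
where $R=\sO_{E_{\aC(X,A)}}$ is the coordinate ring of the target, and to read off each asserted property from the graded structure. First I would establish finiteness: by \autoref{lemma:psi_eventually_surjects} we have $R_d=S_d$ for all $d$ large enough (call these degrees the \emph{stable range}), so $S/R$ is supported in finitely many degrees, and each $H^0(E,\sE_d)$ is a finite-dimensional $k$-vector space since $E$ is proper over $k$ and $\sE_d$ is coherent. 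Hence $S/R$ is finite-dimensional over $k$, and because $k\subseteq R$ this exhibits $S$ as a finite $R$-module.

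Next I would check that $R$ and $S$ are domains and that the map is birational. Since $E$ is integral and each $\sE_d$ is torsion-free of rank one, with multiplications $\sE_d\otimes\sE_e\to\sE_{d+e}$ that are generically isomorphisms, the product of two nonzero homogeneous elements is nonzero at the generic point of $E$; so $S$, and a fortiori $R\subseteq S$, is a domain. For birationality it suffices to show $\Frac R=\Frac S$: given homogeneous $s\in S_d$, choose $e$ in the stable range with $0\ne t\in R_e=S_e$ (such $t$ exists because $\sE_e$ is the restriction of a very ample bundle when $e$ is a large multiple of the Cartier index, so $H^0(E,\sE_e)\ne 0$); then $st\in S_{d+e}=R_{d+e}$ and $t$ both lie in $R$, with $t$ a non-zero-divisor, so $s=(st)/t\in\Frac R$.

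For the homeomorphism, the key is that the two rings agree away from their vertices. For nonzero homogeneous $f\in R_e=S_e$ with $e$ in the stable range, I would show $R_f=S_f$: in each degree $n\in\bZ$ one has $\bigcup_m R_{n+me}/f^m=\bigcup_m S_{n+me}/f^m$, since $n+me$ eventually enters the stable range. Thus the morphism is an isomorphism over each $D(f)$, and these cover $\Spec R\setminus\{v\}$, where $v=V(R_+)$ is the vertex, because the ideal generated by all homogeneous elements of stable-range degree has the same radical as $R_+$. Over $v$ the fiber is the single point $v_S=V(S_+)$, as $\sqrt{R_+S}=S_+$ (every positive-degree element of $S$ has a power landing in $R_+S$). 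A finite morphism is closed, so being continuous and bijective it is a homeomorphism; and away from $v$ it is an isomorphism, hence induces equalities of residue fields there.

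The hard part will be the residue field at the vertex. There the induced extension is $R_0=\im\psi_0\hookrightarrow S_0=H^0(E,\sE_0)=H^0(E,\sO_E)$, using $\sE_0=\sO_E$ from \autoref{eqn:restrictrion_exact_seq}. Since $\psi_0$ is the restriction of constants $H^0(X,\sO_X)\to H^0(E,\sO_E)$ and is injective (as $H^0(X,\sO_X(-E))=0$, a nonzero constant cannot vanish along $E$), this extension is exactly $H^0(X,\sO_X)\subseteq H^0(E,\sO_E)$. It is an equality precisely when $X$ and $E$ share the same field of constants, which holds automatically when $k$ is algebraically closed (both being $k$). Securing this last equality is the one genuinely delicate point; granted it, every residue field extension induced by the morphism is trivial, which together with the finiteness, birationality and homeomorphy established above yields the claim.
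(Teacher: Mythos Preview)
Your proof is correct and follows essentially the same approach as the paper: both set $R=\bigoplus\im\psi_d\subset S=\bigoplus H^0(E,\sE_d)$, use the graded equality $R_d=S_d$ for $d\gg 0$ to get finiteness, show $R_f=S_f$ for homogeneous $f$ of large degree to obtain an isomorphism away from the vertices, and then analyze the fiber over the vertex directly. You are right to flag the residue-field equality at the vertex as the delicate point---the paper simply asserts $\Spec_k H^0(E,\sO_E)=\Spec k(v)$ without further comment, which (as you observe) amounts to $H^0(X,\sO_X)=H^0(E,\sO_E)$ and is automatic in the applications where $k$ is algebraically closed.
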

\begin{proof}
Consider the graded rings $R=\bigoplus_{d\geq 0} \im\psi_d$ and $R'=\bigoplus_{d\geq 0}H^0(E,\sE_d)$. Denote by $v'\in \Spec R'$ the closed point given by the irrelevant ideal. We have an inclusion $R\hookrightarrow R'$ preserving the degrees. By \autoref{lemma:psi_eventually_surjects} there exists $N\gg 0$ such that $R_d=R'_d$ for $d\geq N$. This shows that $\Spec R'\to \Spec R=E_{\aC(X,A)}$ is  finite.

Take $f\in R_d=R'_d$ for $d\geq N$. Then for any $s\in R'_e$ we have $fs\in R_{e+d}=R_{e+d}'$, so $s=fs/f\in R[f^{-1}]$. This shows that $R[f^{-1}]=R'[f^{-1}]$. Since $\{\Spec R[f^{-1}]\mid f\in R_d, d\geq N\}$ is an open cover of $\Spec R\setminus\{v\}$, and similarly $\{\Spec R'[f^{-1}]\mid f\in R_d, d\geq N\}$ is an open cover of $\Spec R'\setminus\{v'\}$, we obtain that $\Spec R'\setminus \{v'\}\to \Spec R\setminus\{v\}$ is an isomorphism.

To conclude we only have to show that $k(v')=k(v)$. We have
        $$\Spec R'\times_{\Spec R} k(v)=\Spec_k \left(H^0(E,\sO_E)\oplus \bigoplus_{d=1}^{N-1}H^0(E,\sE_d)/\im\psi_d\right).$$
Thus 
      $$\Spec k(v')=\red\left(\Spec R'\times_{\Spec R} k(v)\right)=\Spec_kH^0(E,\sO_E)=\Spec k(v),$$
which concludes the proof.
\end{proof}

\subsubsection{Birational singularities of cones}\label{section:birational_sing_cones}

Consider anew a proper pair $(X,E)$ such that $K_X+E\sim_{\mathbb{Q}}rA$ where $A$ is an ample $\mathbb{Q}$-Cartier $\mathbb{Z}$-divisor and $r\in \mathbb{Q}$. The assumption $K_X+E\sim_{\mathbb{Q}}rA$ assures that $K_{\aC(X,A)}+E_{\aC(X,A)}$ is $\mathbb{Q}$-Cartier \cite[2.4.3]{Bernasconi_Kawamata-Viehweg_vanishing_fails}. In this subsection we consider the birational singularities of the affine cone around its vertex. We have (see 
\cite[(2.4.2)]{Bernasconi_Kawamata-Viehweg_vanishing_fails}) that
        $$f^*\left(K_{\aC(X,A)}+E_{\aC(X,A)}\right)=
        K_{\aBC(X,A)}+E_{\aBC(X,A)}+(1+r)X^-,$$
where $f\colon \aBC(X,A) \to \aC(X,A)$ is as in \autoref{diagram:cones}, so it is equivalent to consider the birational singularities of the relative cone along its section.

\begin{definition}\label{inversion}We will say that \emph{inversion of adjunction holds for the pair $(\aBC(X,A),E_{\aBC(X,A)}+X^-)$ along $X^-$}, if the following implications hold: \begin{itemize}
\item If $(X^-,\Diff_{X^-}(E_{\aBC(X,A)}))$ is klt then $(\aBC(X,A),E_{\aBC(X,A)}+X^-)$ is plt around $X^-$, and
\item If $(X^-,\Diff_{X^-}(E_{\aBC(X,A)}))$ is lc then $(\aBC(X,A),E_{\aBC(X,A)}+X^-)$ is lc around $X^-$.
\end{itemize}
\end{definition}

\noindent Since $X^-\cong X$ we see that $X^-$ is normal. Moreover, note that under this isomorphism we have
    \begin{equation}\label{eqn:adjunction_on_section}
        K_{X^-}+\Diff_{X^-}(E_{\aBC(X,A)})= K_X +E.
    \end{equation}
Indeed, this equality holds over the Cartier locus of $A$ where the relative cone is an $\bA^1$-bundle, and since the preimage of that Cartier locus in $\aBC(X,A)$ contains every codimension one point of $X^-$.


The following proposition is standard.

\begin{proposition}\label{lc klt assuming inv adjunction}Assume that $(X,E)$ is a pair such that $K_X+E\sim_{\mathbb{Q}}rA$ where $A$ is an ample $\mathbb{Q}$-Cartier $\mathbb{Z}$-divisor and $r\in \mathbb{Q}$. Assume inversion of adjunction for the pair $(BC_a(X,A),E_{BC_a(X,A)}+X^-)$ along $X^-$. Then:\begin{enumerate}
    \item If $r<0$ and $(X,E)$ is klt, then $(C_a(X,A), E_{C_a(X,A)})$ is klt at the vertex.
    \item If $r\leq 0$ and $(X,E)$ is lc, then $(C_a(X,A), E_{C_a(X,A)})$ is lc at the vertex.
\end{enumerate}
\end{proposition}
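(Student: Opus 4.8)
The plan is to reduce the statement about the cone to the inversion-of-adjunction hypothesis together with the adjunction formula \autoref{eqn:adjunction_on_section}, and to track the coefficient $(1+r)X^-$ carefully. First I would recall the pullback formula established just above the statement, namely
\begin{equation*}
f^*\left(K_{\aC(X,A)}+E_{\aC(X,A)}\right)=K_{\aBC(X,A)}+E_{\aBC(X,A)}+(1+r)X^-,
\end{equation*}
which shows that the birational singularities of $(\aC(X,A),E_{\aC(X,A)})$ at the vertex $v$ are exactly the birational singularities of $(\aBC(X,A),E_{\aBC(X,A)}+(1+r)X^-)$ along the section $X^-$ (the morphism $f$ is an isomorphism away from $v$ and $X^-$, and contracts $X^-$ to $v$). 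So the whole problem moves to the relative cone.

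Next I would set up the adjunction on the section. By \autoref{eqn:adjunction_on_section} we have, under the isomorphism $X^-\cong X$,
\begin{equation*}
K_{X^-}+\Diff_{X^-}\left(E_{\aBC(X,A)}\right)=K_X+E.
\end{equation*}
Hence if $(X,E)$ is klt (resp.\ lc), then $\left(X^-,\Diff_{X^-}(E_{\aBC(X,A)})\right)$ is klt (resp.\ lc). The inversion of adjunction hypothesis, applied to the \emph{reduced} boundary pair $(\aBC(X,A),E_{\aBC(X,A)}+X^-)$, then upgrades this to: $(\aBC(X,A),E_{\aBC(X,A)}+X^-)$ is plt (resp.\ lc) around $X^-$. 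For part (2), where $r\le 0$, the coefficient $(1+r)$ satisfies $1+r\le 1$, so the actual boundary $E_{\aBC(X,A)}+(1+r)X^-$ has coefficient at most $1$ along $X^-$ and is dominated by the reduced boundary $E_{\aBC(X,A)}+X^-$; since lc-ness is preserved under lowering a boundary coefficient, $(\aBC(X,A),E_{\aBC(X,A)}+(1+r)X^-)$ is lc around $X^-$, and pushing forward via $f$ gives that $(\aC(X,A),E_{\aC(X,A)})$ is lc at the vertex.

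For part (1), where $r<0$, I would argue that plt-ness of the reduced pair around $X^-$ gives a strict inequality on discrepancies of divisors over $X^-$ (coefficients $>-1$ for divisors with center meeting $X^-$), and lowering the coefficient of $X^-$ from $1$ to $1+r<1$ can only increase these discrepancies; moreover $X^-$ itself now appears with coefficient $1+r<1$ rather than $1$. The upshot is that $(\aBC(X,A),E_{\aBC(X,A)}+(1+r)X^-)$ is klt around $X^-$, and contracting $X^-$ via $f$ shows $(\aC(X,A),E_{\aC(X,A)})$ is klt at the vertex. The main obstacle, and the step requiring the most care, is precisely this bookkeeping with the coefficient $1+r$: one must verify that the inversion-of-adjunction hypothesis as stated (for the reduced boundary $E_{\aBC(X,A)}+X^-$) genuinely transfers to the non-reduced boundary $E_{\aBC(X,A)}+(1+r)X^-$, and that the sign conditions $r<0$ versus $r\le 0$ correctly produce klt versus lc after contracting $X^-$. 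I expect this to follow formally from monotonicity of discrepancies under decreasing boundary coefficients, but it is the place where an error would most easily creep in.
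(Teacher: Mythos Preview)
Your proposal is correct and follows essentially the same approach as the paper's proof: both use the pullback formula to reduce to the relative cone pair $(\aBC(X,A),E_{\aBC(X,A)}+(1+r)X^-)$, invoke \autoref{eqn:adjunction_on_section} together with the inversion-of-adjunction hypothesis to obtain plt (resp.\ lc) for the reduced pair along $X^-$, and then lower the coefficient of $X^-$ from $1$ to $1+r$ to conclude klt (resp.\ lc). Your discussion of the monotonicity of discrepancies is slightly more explicit than the paper's, but the argument is the same.
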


\begin{proof}
Let $f\colon \aBC(X,A)\to \aC(X,A)$ be the natural morphism. An elementary computation shows that 
    \begin{equation}\label{eqn:pullback_between_cones}
    f^*(K_{C_a(X,A)}+E_{C_a(X,A)})=K_{BC_a(X,A)}+E_{BC_a(X,A)}+(1+r)X^-,
    \end{equation}
see for example \cite[Proof of 3.1]{Kollar_Singularities_of_the_minimal_model_program}.
It is therefore sufficient to prove that $(\aBC(X,A), E_{\aBC(X,A)}+(1+r)X^-)$ is klt, respectively lc. 

Assume that $(X,E)$ is klt and $r<0$. Then by \autoref{eqn:adjunction_on_section}, since inversion of adjunction holds by hypothesis, we obtain that $(\aBC(X,A), E_{\aBC(X,A)}+X^-)$ is plt in a neighbourhood of $X^-$. Since $r<0$ we get that $(\aBC(X,A), E_{\aBC(X,A)}+(1+r)X^-)$ is klt on the same neighbourhood.

If we only assume that $(X,E)$ is lc and $r\leq 0$, then the same argument shows that the pair $(\aBC(X,A), E_{\aBC(X,A)}+(1+r)X^-)$ is lc in a neighbourhood of $X^-$.
\end{proof}

\begin{remark}
If the index of $A$ is not divisible by the characteristic, then the conclusions of \autoref{lc klt assuming inv adjunction} hold \emph{without assuming inversion of adjunction}. Indeed, this can be deduced from the Cartier case \cite[3.1]{Kollar_Singularities_of_the_minimal_model_program} by means of \cite[2.43.1]{Kollar_Singularities_of_the_minimal_model_program}. 
\end{remark}

\subsubsection{Birational singularities of cones over surfaces.}\label{section:birational_sing_threefold_cones}

For cones over surfaces, inversion of adjunction in the form of \autoref{inversion} is available:

\begin{lemma}\label{remark:inv_adj_for_threefolds}
Let $(X,E)$ be a normal surface pair over a perfect field. Let $A$ be an ample $\bQ$-Cartier $\bZ$-divisor on $X$ such that $K_X+E\sim_{\bQ}rA$ with $r\in\bQ$. Then inversion of adjunction (see \autoref{inversion}) holds for the threefold pair $(\aBC(X,A),E_{\aBC(X,A)}+X^-)$ along $X^-$ .
\end{lemma}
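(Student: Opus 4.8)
The plan is to test the two implications of \autoref{inversion} by controlling the log discrepancies of the prime divisors over $\aBC(X,A)$ whose centre meets $X^-$, and to show that these are governed entirely by the surface pair $(X,E)$. The starting point is the isomorphism $X^-\cong X$ together with the adjunction identity recorded in \autoref{eqn:adjunction_on_section}, namely $K_{X^-}+\Diff_{X^-}(E_{\aBC(X,A)})=K_X+E$: this already matches the \emph{restricted} data, so the remaining task is to compare the singularities of the total space near $X^-$ with those of $(X,E)$ themselves.

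First I would dispose of everything lying over the Cartier locus $U\subseteq X$ of $A$. Since $X$ is a normal surface, $X\setminus U$ is a finite set of closed points, and by \autoref{propertiesofcone} the morphism $\rho\colon\rho^{-1}(U)\to U$ is an $\bA^1$-bundle (the total space of a line bundle), hence smooth over $U$. One can therefore pull back a log resolution of the surface pair $(X,E)$ along the smooth morphism $\rho$, obtaining a log resolution of $\bigl(\aBC(X,A),E_{\aBC(X,A)}+X^-\bigr)$ on which the vertical exceptional divisors have the same log discrepancies as the corresponding divisors over $(X,E)$, while $X^-$ is the boundary component of coefficient one. Consequently, over $U$ the cone pair is plt (resp.\ lc) near $X^-$ exactly when $(X,E)$ is klt (resp.\ lc), which is the desired inversion of adjunction there.

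It remains to treat the finitely many points of $X^-$ lying over $X\setminus U$. Here the usual reduction to the Cartier case --- pass to a cyclic cover of the surface germ that makes $A$ Cartier and transport klt/lc/plt through it --- is available only when the index of $A$ is prime to the residue characteristic (this is the content of the Remark following \autoref{lc klt assuming inv adjunction}). To cover the remaining case, in which the index may be divisible by the characteristic, the plan is to carry out an explicit, characteristic-independent computation of log discrepancies. Concretely, I would take a log resolution $g\colon X'\to X$ of $(X,E)$ with $X'$ smooth and $\Exc(g)\cup g_*^{-1}E\cup\Supp(g^*A)$ simple normal crossing, form the induced birational model of the relative cone over $X'$, and observe that over each chart of the smooth surface $X'$ this model is the total space of a $\bQ$-line bundle, i.e.\ an orbifold object whose discrepancies are given by combinatorial (toric) formulas that do not see the characteristic. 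Matching these formulas against the surface discrepancies of $(X,E)$ --- the vertical divisors reproducing the discrepancies $a(X,E;F)$ and the horizontal section reproducing \autoref{eqn:adjunction_on_section} --- would yield the two implications at the bad points as well.

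The main obstacle is precisely this last computation at the non-Cartier points: one must track the fractional part of $g^*A$ and verify that it contributes only to the different along $X^-$, so that the \emph{vertical} log discrepancies remain equal to those of the surface pair in every characteristic. This is exactly where the two hypotheses are essential. The surface assumption makes the non-Cartier locus finite and, equivalently, permits replacing the computation by an appeal to inversion of adjunction for \emph{surfaces} (valid in all characteristics) after cutting $\aBC(X,A)$ with a general hypersurface section through the bad point and using compatibility of the different with general hyperplane cutting; perfectness of the base field guarantees that the resolution $g$ and the accompanying general-position arguments behave well. A naive reduction to the Cartier case via cyclic covers breaks when the index is divisible by the characteristic, so it is this characteristic-free discrepancy analysis --- not a covering trick --- that carries the proof.
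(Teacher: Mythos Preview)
Your outline attempts to prove inversion of adjunction from scratch by an explicit discrepancy computation, splitting into the Cartier locus (where $\rho$ is an $\bA^1$-bundle) and the finitely many bad points. The paper does something much simpler: it cites the general threefold inversion of adjunction theorem. Hacon--Xu's proof of inversion of adjunction for threefolds (via MMP and the connectedness lemma) originally required characteristic $>5$, but Hacon--Witaszek's relative MMP results remove that restriction, so the theorem is now available in every characteristic and for arbitrary coefficients. The only additional input the paper needs is that $\aBC(X,A)$ is $\bQ$-factorial, which follows because a potentially klt surface is $\bQ$-factorial and this propagates to the relative cone.

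Your approach, by contrast, has a genuine gap at the hard step. Pulling back along a log resolution $g\colon X'\to X$ does not immediately yield a birational model of $\aBC(X,A)$ on which discrepancies are computed by ``toric formulas'': $g^*A$ is only a $\bQ$-divisor, so the algebra $\bigoplus_d\sO(dg^*A)$ is not defined, and the correct replacement (the normalized fibre product) has singularities over the support of the fractional part that are not simply read off from combinatorics. Your alternative, cutting by a general hypersurface and appealing to surface inversion of adjunction, runs into circularity: to transfer the lc/plt property from the hypersurface section back to the threefold you would need precisely the threefold adjunction you are trying to prove (or a nontrivial semicontinuity argument you have not supplied). In short, the paper bypasses all of this by invoking a theorem you seem to have overlooked; your direct computation would require substantially more work to close.
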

\begin{proof}
Thanks to \cite[Corollary 1.4]{Hacon_Witaszek_On_the_relative_MMP_for_threefolds_in_low_char}, the proof of \cite[Theorem 6.2]{Hacon_Xu_On_the_3dim_MMP_in_pos_char} works in every characteristic and without any assumption on the coefficients of $E_{\aBC(X,A)}$ (see also \cite[Corollary 1.5]{Hacon_Witaszek_The_MMP_for_threefolds_in_char_5}). To conclude we need to show that if $(X,E)$ is dlt then $\aBC(X,A)$ is $\mathbb{Q}$-factorial. Indeed, as $X$ is a potentially klt surface it is $\mathbb{Q}$-factorial \cite[Corollary 4.11]{Tanaka_MMP_for_excellent_surfaces} and hence by \cite[Proof of proposition 2.4]{Bernasconi_Kawamata-Viehweg_vanishing_fails} the relative cone $\aBC(X,A)$ is $\mathbb{Q}$-factorial.
\end{proof}



\begin{corollary}\label{Sing 3fold Cones}
Assume that $(X,E)$ is a surface pair such that $K_X+E\sim_{\mathbb{Q}}rA$ where $A$ is an ample $\mathbb{Q}$-Cartier $\mathbb{Z}$-divisor and $r\in \mathbb{Q}$. Then;\begin{enumerate}
    \item If $r<0$ and $(X,E)$ is klt, then $(\aC(X,A), E_{\aC(X,A)})$ is klt everywhere (not only at the vertex).
    \item If $r\leq 0$ and $(X,E)$ is lc, then $(\aC(X,A), E_{\aC(X,A)})$ is lc everywhere (not only at the vertex).
\end{enumerate}
\end{corollary}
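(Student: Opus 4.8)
The plan is to reduce the global statement of Corollary~\ref{Sing 3fold Cones} to the local statement at the vertex, which is exactly Proposition~\ref{lc klt assuming inv adjunction} combined with the inversion of adjunction provided by Lemma~\ref{remark:inv_adj_for_threefolds}. The key observation is that the cone $\aC(X,A)$ has a single ``special'' point, namely the vertex $v$, and away from $v$ the cone looks like a product. So once we know the singularities are good at $v$, we must propagate this to all other points using the $\bG_m$-action (equivalently, the grading on the cone ring).

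First I would invoke Lemma~\ref{remark:inv_adj_for_threefolds}: since $(X,E)$ is a normal surface pair over a perfect field and $A$ is an ample $\bQ$-Cartier $\bZ$-divisor with $K_X+E\sim_\bQ rA$, inversion of adjunction (in the sense of \autoref{inversion}) holds for the threefold pair $(\aBC(X,A),E_{\aBC(X,A)}+X^-)$ along $X^-$. This exactly discharges the hypothesis of Proposition~\ref{lc klt assuming inv adjunction}, so in case (1) (with $r<0$ and $(X,E)$ klt) the pair $(\aC(X,A),E_{\aC(X,A)})$ is klt \emph{at the vertex}, and in case (2) (with $r\le 0$ and $(X,E)$ lc) it is lc \emph{at the vertex}. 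It remains to upgrade ``at the vertex'' to ``everywhere.''

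For the upgrade I would use the explicit description of the cone away from the vertex from Claim~\ref{propertiesofcone}(4)--(5): the isomorphism $\aBC(X,A)\setminus X^-\cong \aC(X,A)\setminus\{v\}$ identifies the punctured cone with an open subscheme of the relative cone that is, over a big open subset of $X$, an $\bA^1$-bundle (Claim~\ref{propertiesofcone}(1)). Concretely, any closed point $w\neq v$ of $\aC(X,A)$ lies in the image of $\rho\colon \aBC(X,A)\to X$ over some point $x\in X$, and a local analytic (or \'etale) neighbourhood of $w$ is a product of a neighbourhood of $x\in X$ with an affine line. By \autoref{eqn:adjunction_on_section} the different computation gives $K_{X^-}+\Diff_{X^-}(E_{\aBC(X,A)})=K_X+E$, so the discrepancies of the cone away from the section match those of $(X,E)$ up to the trivial $\bA^1$-factor; since $(X,E)$ is klt (resp.\ lc) by hypothesis, the cone $(\aC(X,A),E_{\aC(X,A)})$ is klt (resp.\ lc) at every point other than $v$. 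Combined with the vertex computation, this gives the statement everywhere.

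The step I expect to be the main obstacle is the rigorous propagation away from the vertex. One must argue that taking a cone (i.e.\ crossing with an $\bA^1$-direction, at least generically) does not worsen the singularity type, and that the finitely many codimension-one points of $X^-\cong X$ over which the $\bA^1$-bundle structure might degenerate do not cause trouble. The cleanest route is probably to note that $\aC(X,A)\setminus\{v\}$ admits a $\bG_m$-action whose quotient is $X$, so klt/lc at a point in a given $\bG_m$-orbit is equivalent to klt/lc at the image point downstairs; this follows from the behaviour of discrepancies under the smooth (indeed $\bA^1$-bundle in codimension one) morphism $\rho$, together with the compatibility of differents recorded in \autoref{eqn:adjunction_on_section}. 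Once this orbit-wise reduction is in place, the hypothesis that $(X,E)$ is globally klt (resp.\ lc) closes the argument.
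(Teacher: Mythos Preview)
Your overall strategy is sound---reduce to the vertex via Proposition~\ref{lc klt assuming inv adjunction} and Lemma~\ref{remark:inv_adj_for_threefolds}, then propagate using the $\bG_m$-action---but the propagation step has a genuine gap.

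The problem is your claim that ``klt/lc at a point in a given $\bG_m$-orbit is equivalent to klt/lc at the image point downstairs'' because $\rho$ is smooth. The morphism $\rho^0\colon \sV=\aC(X,A)\setminus\{v\}\to X$ is an $\bA^1$-bundle only over the big open where $A$ is Cartier; over the (finitely many) points where $A$ is merely $\bQ$-Cartier, $\rho^0$ need not be smooth, and the comparison of discrepancies with $(X,E)$ via a local product structure breaks down exactly there. The equation \autoref{eqn:adjunction_on_section} records the different along the zero section $X^-$ inside $\aBC(X,A)$, not a discrepancy comparison along the fibers of $\rho^0$, so it does not help at those bad points either. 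You acknowledge this is ``the main obstacle'' but do not actually overcome it.

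The paper closes this gap differently. It works on the relative cone and observes that inversion of adjunction yields $(\aBC(X,A),E_{\aBC(X,A)}+X^-)$ plt---hence, after lowering the coefficient of $X^-$, klt---along the \emph{entire} section $X^-$, not merely above the vertex. Since log resolutions exist for threefolds in positive and mixed characteristic, the non-klt locus is closed, so one obtains an open klt set $U_{\mathrm{klt}}\supseteq X^-$. Then $U_{\mathrm{klt}}\cap\sV$ meets every fiber of $\rho^0$. The $\bG_m$-action on $\sV$ preserves $E_{\aBC(X,A)}\cap\sV$ (its ideal is graded) and hence preserves the klt locus; since it acts transitively on each fiber of $\rho^0$, one concludes $U_{\mathrm{klt}}\cap\sV=\sV$. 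The two ingredients you are missing are (i) the explicit use of openness of the klt locus, and (ii) a ``seed'' of klt points that meets every $\bG_m$-orbit. The paper's seed is an open neighbourhood of $X^-$ in $\aBC(X,A)$, not a comparison with the base $(X,E)$.
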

\begin{proof} 
First assume that $r<0$ and that $(X,E)$ is klt. To show that $(\aC(X,A), E_{\aC(X,A)})$ is klt everywhere, by the crepant equation \autoref{eqn:pullback_between_cones} it suffices to show that the relative cone pair $(\aBC(X,A), E_{\aBC(X,A)})$ is klt everywhere. For the rest of the proof, let us write
        $$\sV=\aBC(X,A)\setminus X^-, \quad 
        \rho|_{\sV}=\rho^0\colon \sV\to X$$
where $\rho\colon \aBC(X,A)\to X$ is the structural morphism.

It follows from \autoref{lc klt assuming inv adjunction} that the pair $(\aBC(X,A), E_{\aBC(X,A)})$ is klt along the section $X^-$, since by \autoref{remark:inv_adj_for_threefolds} inversion of adjunction holds for threefold pairs. Since log resolution for threefolds is available in positive and mixed characteristics \cite{Cossart_Piltant_Resolution_of_singularities_for_3_folds_in_pos_char_I, Cossart_Piltant_Resolution_of_3folds_II}, we can apply the arguments of \cite[2.31-32]{Kollar_Mori_Birational_geometry_of_algebraic_varieties} to deduce that the non-klt locus of $(\aBC(X,A), E_{\aBC(X,A)})$ is closed. Therefore we obtain that: if $U_\text{klt}\subset \aBC(X,A)$ is the klt locus of the pair $(\aBC(X,A), E_{\aBC(X,A)})$, then $U_\text{klt}$ is open and $X^-\subset U_\text{klt}$. Therefore, $U^0_\text{klt}=U_\text{klt}\cap \sV$ intersects non-trivially every fiber of the morphism $\rho^0$.

By \autoref{propertiesofcone} we have
        $$\sV \cong \Spec_X\bigoplus_{d\in\bZ} \sO(dA),$$
and the $\bZ$-grading induces a $\bG_m$-action on $\sV$ such that $\rho^0$ is invariant. If $D$ is any prime divisor supported on $\Supp(E)$, then the ideal of $D_{\aBC(X,A)}\cap \sV$ is the graded ideal $\bigoplus_{d\in\bZ}\sO_X(dA-D)$. In particular the $\bG_m$-action preserves $D_{\aBC(X,A)}\cap\sV$. Therefore $\bG_m$ acts on the log pair $(\sV, E_{\aBC(X,A)}\cap \sV)$. That action preserves the open subset $U^0_\text{klt}\subseteq \sV$, because isomorphisms of log pairs preserve klt singularities.

Since $\rho^0$ is $\bG_m$-invariant, the $\bG_m$-action preserves the fibers of $\rho^0$. Moreover the restricted action on any fiber of $\rho^0$ is transitive. Since $U^0_\text{klt}$ intersects every fiber of $\rho^0$ non-trivially, we deduce that $U^0_\text{klt}=\sV$. Therefore $U_\text{klt}=\aBC(X,A)$, and we have proved that $(\aBC(X,A),E_{\aBC(X,A)})$ is klt everywhere.

In the case where $r\leq 0$ and $(X,E)$ is lc, the proof is similar.
\end{proof}

\section{Normality in characteristic $>5$}\label{section:normality}

\subsection{Normality of minimal lc centers}
The following result follows essentially from \cite{Filipazzi_Waldron_Connectedness_principle_char>5}.
\begin{lemma}\label{lemma:minimal_lc_centers_are_normal_up_to_homeo}
Let $(X,\Delta)$ be a semi-log canonical threefold whose closed points have perfect residue fields of characteristic different from $2,3 $ and $5$. Then the minimal lc centers of $(X,\Delta)$ are normal up to universal homeomorphism.
\end{lemma}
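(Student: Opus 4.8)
The plan is to reduce to the case where $(X,\Delta)$ is log canonical (so that $X$ is normal), to prove there that every minimal lc center is geometrically unibranch, and then to invoke \autoref{lemma:geom_unibranch_implies_univ_homeo_normalization}: once a minimal lc center $W$ is geometrically unibranch, its normalization $W^\nu\to W$ is a universal homeomorphism, which is exactly the assertion that $W$ is normal up to universal homeomorphism. So the statement splits into a formal reduction and a geometric core.

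First I would carry out the reduction from the semi-log canonical to the log canonical case. Let $\pi\colon (\overline{X},\overline{\Delta}+\overline{C})\to (X,\Delta)$ be the normalization, where $\overline{C}$ is the conductor; then $(\overline{X},\overline{\Delta}+\overline{C})$ is log canonical and $\pi$ is finite and surjective. Every minimal lc center $W$ of $(X,\Delta)$ is dominated through the finite morphism $\pi$ by a minimal lc center of $(\overline{X},\overline{\Delta}+\overline{C})$, and since universal homeomorphisms are stable under composition it suffices to treat the normal case together with the finite gluing morphism along $\overline{C}$. This step is routine modulo the gluing formalism, and it is precisely here that the slack of ``up to universal homeomorphism'' is absorbed.

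For the core, assume $(X,\Delta)$ is log canonical, let $W$ be a minimal lc center and fix a closed point $w\in W$. By \autoref{lemma:irred_etale_neigh_implies_geom_unibranch} it is enough to show that $W$ remains irreducible in every \'etale neighbourhood of $w$. Choose a crepant dlt modification $g\colon (Y,\Delta_Y)\to (X,\Delta)$ with $K_Y+\Delta_Y=g^*(K_X+\Delta)$ and $\lfloor\Delta_Y\rfloor=\Nklt(Y,\Delta_Y)$, which exists by the threefold MMP in these characteristics \cite{Bhatt&Co_MMP_for_3folds_in_mixed_char, Hacon_Witaszek_On_the_relative_MMP_for_threefolds_in_low_char}. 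As $g$ is crepant, $-(K_Y+\Delta_Y)$ is numerically $g$-trivial, hence $g$-nef and $g$-big, so the connectedness principle of \cite{Filipazzi_Waldron_Connectedness_principle_char>5} applies and shows that $\Nklt(Y,\Delta_Y)\cap g^{-1}(x)$ is connected for every $x\in X$, a property stable under \'etale base change on $X$. Since $W$ is minimal I would pick a minimal stratum $S$ of $\lfloor\Delta_Y\rfloor$ with $g(S)=W$; in residue characteristic $>5$ such a minimal dlt stratum is normal by \cite[Theorem 3.1]{Bernasconi_Kollar_vanishing_theorems_for_threefolds_in_char_>5}, hence in particular unibranch. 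Passing to an arbitrary \'etale neighbourhood $X'\to X$ of $w$ and base-changing the whole picture, minimality of $W$ is preserved, any stratum whose image lies in the base-change $W'$ must dominate a component of $W'$, and combining connectedness of $\Nklt\cap (g')^{-1}(w')$ with the unibranchness of the minimal stratum $S'$ forces $W'$ to be irreducible at the point over $w$. Perfectness of the residue fields together with the normality of $S$ upgrades this to geometric unibranchness, and \autoref{lemma:geom_unibranch_implies_univ_homeo_normalization} then finishes the argument.

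The main obstacle is the final implication: converting the connectedness of the \emph{entire} non-klt fibre into the statement that the single minimal center $W$ has only one analytic branch through $w$. Plain connectedness of $\Nklt(Y,\Delta_Y)\cap g^{-1}(w)$ does not by itself separate the branches of $W$, because two branches could a priori be joined through other, higher-dimensional strata lying in the same fibre. The resolution is to apply the connectedness principle in its relative form to the minimal stratum $S\to W$ (equivalently, after discarding or contracting the strata that do not dominate $W$), which is exactly the refined input supplied by \cite{Filipazzi_Waldron_Connectedness_principle_char>5}; tracking the residue field extensions along $S\to W$ to obtain \emph{geometric} rather than merely topological unibranchness is the remaining technical point, and it is where the hypotheses on the residue characteristics and on perfectness of the residue fields are used.
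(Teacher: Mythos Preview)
Your overall strategy---reduce to geometric unibranchness and invoke \autoref{lemma:geom_unibranch_implies_univ_homeo_normalization}---matches the paper's exactly, and for the log canonical core both you and the paper rely on \cite{Filipazzi_Waldron_Connectedness_principle_char>5}. The paper is simply more efficient here: rather than running a dlt modification and the connectedness principle by hand, it cites \cite[Corollary~1.7]{Filipazzi_Waldron_Connectedness_principle_char>5} (intersections of lc centers are unions of lc centers), which already packages the ``refined input'' you mention; your reconstruction is correct in spirit but you are re-proving a piece of that reference.

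The genuine gap is in your slc reduction. You assert that a minimal lc center $W$ of $(X,\Delta)$ is dominated by a minimal lc center $\overline{W}$ of the normalization, and that because universal homeomorphisms compose, knowing $\overline{W}^\nu\to\overline{W}$ is a universal homeomorphism suffices. But the finite map $\overline{W}\to W$ is \emph{not} in general a universal homeomorphism: if $W$ lies in the double locus it may be $2$-to-$1$, and even away from the double locus it is only birational, not a homeomorphism. So normality-up-to-universal-homeomorphism of $\overline{W}$ does not transfer to $W$ by composition. The paper avoids this by never trying to compare $\overline{W}$ and $W$ directly. Instead it works \'etale-locally on $X$: given an \'etale neighbourhood $X'\to X$, it uses that normalization commutes with \'etale base change to obtain the normalization $Y\to X'$, observes that lc centers on $X'$ are precisely the $\mu$-images of lc centers on the lc pair $Y$, and then applies \cite[Corollary~1.7]{Filipazzi_Waldron_Connectedness_principle_char>5} on $Y$ to conclude that if the pullback $Z'$ of $W$ were reducible at $z'$, it would contain a strictly smaller lc center, contradicting minimality of $W$ (since the \'etale map $X'\to X$ is also crepant). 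This is the missing idea in your reduction.
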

\begin{proof}
By \autoref{lemma:geom_unibranch_implies_univ_homeo_normalization} it is sufficient to show that the minimal lc centers are geometrically unibranch. The case where $(X,\Delta)$ is lc follows from \cite[Corollary 1.7]{Filipazzi_Waldron_Connectedness_principle_char>5} and stability of lc singularities under \'{e}tale base-changes. 

In general, let $Z\subset X$ is a minimal lc center, let $z\in Z$ and consider an \'{e}tale neighborhood $(z'\in Z')\to (z\in Z)$. We have to show that $Z'$ is irreducible at $z'$. After shrinking it if necessary, we may assume that there is an \'{e}tale neighborhood $(z'\in X')\to (z\in X)$ such that $Z'=Z\times_XX'$ \cite[02GI, 02GU]{StacksProject}. As normalization commutes with \'{e}tale base-changes \cite[07TD]{StacksProject}, if $(\bar{X},\bar{D}+\bar{\Delta})$ is the normalization of $X$ then $Y=\bar{X}\times_XX'\to X'$ is the normalization morphism. We also obtain crepant divisors, defined by the following diagram:
        $$\begin{tikzcd}
        (Y,D_Y+\Delta_Y)\arrow[d, "\mu"]\arrow[r] & (\bar{X},\bar{D}+\bar{\Delta})\arrow[d] \\
        (X',\Delta')\arrow[r, "f"] & (X,\Delta)
        \end{tikzcd}$$
Since $\mu$ is crepant, the lc centers of $(X',\Delta')$ are exactly the images of the lc centers of $(Y,D_Y+\Delta_Y)$. By construction $Z'$ is an lc center of $(X',\Delta')$. Since intersections of lc centers on $Y$ are union of lc centers by \cite[Corollary 1.7]{Filipazzi_Waldron_Connectedness_principle_char>5}, we see that if $Z'$ is reducible then it is not a minimal lc center. But $f$ is crepant, so it means that $Z$ is not a minimal lc center of $(X,\Delta)$, which is a contradiction. Thus $Z'$ is irreducible, as desired.
\end{proof}

\begin{theorem}\label{maintheorem}
Let $(X,\Delta)$ be a log canonical threefold whose closed points have perfect residue fields of characteristic different from $2,3 $ and $5$. If $C\subset X$ is a minimal log canonical center then $C$ is normal.
\end{theorem}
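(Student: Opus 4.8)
The plan is to follow the strategy indicated in the introduction, reducing everything to the vanishing theorem \autoref{mainvanish} and the factorization property \autoref{corollary:factorization_in_normal_case}. First I would dispose of the cases $\dim C\neq 1$. If $\dim C=0$ then $C$ is a reduced point, hence normal. If $\dim C=2$ then $C$ is a component of $\lfloor\Delta\rfloor$, and minimality forces $(X,\Delta)$ to be plt in a neighbourhood of $C$; normality of such plt centers in our range of characteristics is already known by \cite{Hacon_Xu_On_the_3dim_MMP_in_pos_char} and its mixed-characteristic analogues. From now on I assume $C$ is a curve, and since normality is local on $C$ I fix a closed point $c\in C$ and prove normality there.

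The key preliminary step is a localization. As $C$ is a \emph{minimal} lc center, no lc center is strictly contained in $C$, so $c$ is not a zero-dimensional lc center; since there are only finitely many lc centers, after replacing $X$ by a quasi-projective open neighbourhood of $c$ I may assume $(X,\Delta)$ has no zero-dimensional lc center at all. Now I would use that, by \cite[Corollary 1.7]{Filipazzi_Waldron_Connectedness_principle_char>5}, the intersection of two lc centers is again a union of lc centers: two distinct one-dimensional lc centers could therefore only meet along zero-dimensional ones, of which there are none, so the one-dimensional lc centers are pairwise disjoint. Shrinking once more, I arrange that $C$ is the \emph{unique} lc center of dimension $\le 1$.

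Next I would apply \autoref{mainvanish} (valid since the residue characteristics at closed points are $>5$), producing a dlt modification $f\colon(X^\text{dlt},\Delta_{X^\text{dlt}}+E)\to X$ with $E=\Exc(f)$ and $R^1f_*\sO_{X^\text{dlt}}(-E)=0$. Each component of $E$ is an exceptional lc place, so its center is an lc center of dimension $\le 1$, necessarily $C$; thus $f(E)=C$ and every component of $E$ dominates $C$. Pushing forward $0\to\sO_{X^\text{dlt}}(-E)\to\sO_{X^\text{dlt}}\to\sO_E\to 0$ and using $f_*\sO_{X^\text{dlt}}=\sO_X$ together with the vanishing, the map $\sO_X\to f_*\sO_E$ is surjective; since $f_*\sO_E$ is at once a quotient of $\sO_X$ and reduced (as $E$ is reduced), it is the structure sheaf of the reduced image $C$, i.e. $f_*\sO_E=\sO_C$. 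Now the reduced divisor $E$ is the reduced part of the boundary of the dlt pair $(X^\text{dlt},E)$ and is $S_2$ in residue characteristic $>5$, so \autoref{lemma:divisor_is_weakly_normal} shows $E$ is weakly normal; meanwhile \autoref{lemma:minimal_lc_centers_are_normal_up_to_homeo} tells us that $\nu\colon C^\nu\to C$ is a universal homeomorphism, hence coincides with the weak normalization of $C$. As every component of $E$ dominates $C$ and $E$ is weakly normal, \autoref{corollary:factorization_in_normal_case} factors $f|_E\colon E\to C$ as $E\to C^\nu\xrightarrow{\nu}C$. Pushing forward gives $\nu_*\sO_{C^\nu}\hookrightarrow f_*\sO_E=\sO_C$, and combined with the tautological inclusion $\sO_C\subseteq\nu_*\sO_{C^\nu}$ this forces $\nu_*\sO_{C^\nu}=\sO_C$, so $\nu$ is an isomorphism and $C$ is normal at $c$.

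The hard part is not any single computation but the reduction of the \emph{global} vanishing $R^1f_*\sO(-E)=0$, which involves the whole exceptional divisor, to the statement $f_*\sO_E=\sO_C$ about the single center $C$. This hinges entirely on arranging $f(E)=C$, which is exactly what the disjointness of the one-dimensional lc centers buys us after the zero-dimensional ones are removed — a consequence of the connectedness principle of \cite{Filipazzi_Waldron_Connectedness_principle_char>5}. A secondary technical point that I expect to require the structure theory of threefold dlt pairs in characteristic $>5$, such as \cite{Bernasconi_Kollar_vanishing_theorems_for_threefolds_in_char_>5}, is the $S_2$-property of the reduced boundary $E$, needed to invoke \autoref{lemma:divisor_is_weakly_normal} and conclude that $E$ is weakly normal.
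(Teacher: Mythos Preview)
Your proposal is correct and follows essentially the same approach as the paper: reduce to the curve case, apply the vanishing of \autoref{mainvanish} to get a surjection $\sO_X\twoheadrightarrow f_*\sO_E$, use $S_2$-ness of $E$ (via \cite{Bernasconi_Kollar_vanishing_theorems_for_threefolds_in_char_>5}) and \autoref{lemma:divisor_is_weakly_normal} to get weak normality of $E$, then combine \autoref{lemma:minimal_lc_centers_are_normal_up_to_homeo} with \autoref{corollary:factorization_in_normal_case} to factor through $C^\nu$ and conclude. Your localization step is in fact more carefully justified than the paper's: where the paper simply asserts ``we can assume that $g(E_i)=C$ for each component $E_i$ of $E$'', you explain why, via the intersection property of lc centers from \cite{Filipazzi_Waldron_Connectedness_principle_char>5}, the one-dimensional lc centers become disjoint once the zero-dimensional ones are removed.
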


\begin{proof}
The strategy of proof is similar to \cite[Theorem 3.6]{DH16}. Firstly, if $C$ is a surface, then $(X,\Delta)$ is plt around $C$ and by \cite[Theorem 19]{Bernasconi_Kollar_vanishing_theorems_for_threefolds_in_char_>5} the surface $C$ is normal. If $C$ is a point there is nothing to prove. So for the rest of the proof we may assume that $C$ is a curve. We may also localize at any closed point of $C$, so that we can assume that $(x\in X,\Delta)$ is local and that $C$ is the unique minimal log canonical center of $(X,\Delta)$. 

Let $g\colon (Z, \Delta_Z+E)\to (X,\Delta_X)$ be the dlt model described in \autoref{mainvanish}. We can assume that $g(E_i)=C$ for each component $E_i$ of $E$. Since $(Z, \Delta_Z+ E)$ is dlt \cite[Theorem 19]{Bernasconi_Kollar_vanishing_theorems_for_threefolds_in_char_>5} implies that $E$ is $S_2$. Therefore $E$ is weakly normal by \autoref{lemma:divisor_is_weakly_normal}. Consider the short exact sequence associated to $E$ inside $Z$:
$$\begin{tikzcd} 0\arrow{r}& \mathcal{O}_Z(-E)\arrow{r} &\mathcal{O}_Z\arrow{r} &\mathcal{O}_E\arrow{r} & 0.\end{tikzcd}$$
By \autoref{mainvanish} it holds that $R^1g_*\mathcal{O}_Z(-E)=0$ and hence, pushing forward along $g$, on $X$ there is a short exact sequence on
$$\begin{tikzcd} 0\arrow{r}& g_*\mathcal{O}_Z(-E)\arrow{r} &g_*\mathcal{O}_Z \arrow{r} &g_*\mathcal{O}_E\arrow{r} &0. \end{tikzcd}$$
 We have $g_*\mathcal{O}_Z=\mathcal{O}_X$ and $g_*\mathcal{O}_Z(-E)=\mathcal{I}_C$ is the ideal sheaf of $C$. Let $\nu\colon C^\nu\to C$ denote the normalization. By \autoref{lemma:minimal_lc_centers_are_normal_up_to_homeo} this normalization morphism is a universal homeomorphism. Since $E$ is weakly normal, \autoref{corollary:factorization_in_normal_case} implies that there exists a factorization $\mathcal{O}_C\hookrightarrow \nu_*\mathcal{O}_{C^\nu}\hookrightarrow g_*\mathcal{O}_E$. Therefore we have the commutative diagram
 $$\begin{tikzcd}&&\mathcal{O}_C\arrow[hook]{rd}\\
 &&&\nu_*\mathcal{O}_{C^\nu}\arrow[hook]{d}\\
 \mathcal{O}_X\arrow[twoheadrightarrow]{rruu}\arrow[twoheadrightarrow]{rrr}&&&g_*\mathcal{O}_E.
 \end{tikzcd}$$
    Since $\sO_X\twoheadrightarrow g_*\sO_E$ is surjective, we see that $\mathcal{O}_C\hookrightarrow \nu_*\mathcal{O}_{C^\nu}$ is also a surjection. Therefore $C\cong C^\nu$, as claimed.
    \end{proof}


\subsection{Demi-normality of Cartier boundaries}\label{section:demi_normality}

In the Cartier case, the following result is due to \cite{ABP, Arv22}. We can build on it using cyclic covers to obtain a slight improvement.

\begin{proposition}\label{prop:demi_normality}
Let $(X,S+\Delta)$ be an slc threefold pair, where $S$ is an effective divisor. Assume that the residue fields at closed points are perfect of characteristics different from $2$, $3$ and $5$. If $S$ is $\bQ$-Cartier with index not divisible by any of the residue characteristics, then $S$ is demi-normal.
\end{proposition}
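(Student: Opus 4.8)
The plan is to verify the two defining properties of demi-normality for $S$ separately: that $S$ is $S_2$, and that $\sO_{S,\eta}$ is regular or nodal at every codimension one point $\eta$. Since $X$ is a threefold, $S$ is a surface, so the codimension one points are generic points of curves in $S$. Along such a point, $S$ is the reduced boundary of the slc pair $(X,S+\Delta)$: by adjunction $(S,\Diff_S(\Delta))$ is slc, and the classification of log canonical surface singularities (the input to \autoref{lemma:divisor_is_weakly_normal}, valid since the residue characteristics are $\neq 2$) shows that $S$ has at worst nodal singularities in codimension one. This part uses neither the Cartier index hypothesis nor any auxiliary cover. Thus everything reduces to proving that $S$ is $S_2$, which is a local statement on $X$; I would establish it by reducing to the Cartier case via a tame cyclic cover.

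First I would form the index-one cyclic cover of $S$. Writing $m$ for the Cartier index of $S$, which by hypothesis is coprime to all residue characteristics, this is the finite morphism $\pi\colon Y=\Spec_X\bigoplus_{i=0}^{m-1}\sO_X(iS)\to X$ (the algebra structure coming from a local trivialization of $\sO_X(mS)$), a $\mu_m$-Galois cover with $X=Y/\mu_m$. Because no component of $S$ lies in $\Sing(X)$, the divisor $S$ is Cartier at its generic point, so its non-Cartier locus has codimension $\geq 2$ in $X$; hence $\pi$ is étale in codimension one, and tame as $m$ is invertible. Consequently $Y$ is again demi-normal, the pullback $S_Y:=\pi^{*}S$ is Cartier on $Y$, and $\pi$ is crepant, $K_Y+S_Y+\Delta_Y=\pi^{*}(K_X+S+\Delta)$ with $\Delta_Y=\pi^{*}\Delta$. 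Since a finite crepant cover of an slc pair is slc (see \cite[2.43]{Kollar_Singularities_of_the_minimal_model_program}), the pair $(Y,S_Y+\Delta_Y)$ is slc with $S_Y$ now Cartier.

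Next I would invoke the Cartier case: as $S_Y$ is Cartier, the main theorem of \cite{ABP} applies to $(Y,S_Y+\Delta_Y)$ and yields that $S_Y$ is demi-normal, in particular $S_2$. It then remains to descend the $S_2$ property along the finite morphism $\pi_S\colon S_Y\to S$ obtained by restriction. Because $m$ is invertible, the Reynolds (averaging) operator $\frac{1}{m}\sum_{g\in\mu_m}g$ furnishes an $\sO_S$-linear splitting of the inclusion $\sO_S\hookrightarrow \pi_{S*}\sO_{S_Y}$, so that $\sO_S=(\pi_{S*}\sO_{S_Y})^{\mu_m}$ is a direct summand of $\pi_{S*}\sO_{S_Y}$. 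Since $\pi_S$ is finite and $S_Y$ is $S_2$, the module $\pi_{S*}\sO_{S_Y}$ is $S_2$ over $\sO_S$, and a direct summand of an $S_2$-module is $S_2$; hence $S$ is $S_2$. Combined with the nodal codimension one structure established in the first paragraph, this shows that $S$ is demi-normal.

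The main obstacle I anticipate is justifying the properties of the cyclic cover in the demi-normal (non-normal) setting: that $\pi$ is étale in codimension one and crepant, that $Y$ remains demi-normal so that \cite{ABP} is applicable, and that $S_Y$ is genuinely Cartier. The hypothesis that $m$ is coprime to the residue characteristics is essential throughout: it guarantees tameness (hence étaleness in codimension one and preservation of slc singularities under $\pi$) and the existence of the Reynolds splitting used in the descent. Without it the cover could acquire wild ramification and the argument would break down, in accordance with the failure of $S_2$-ness exhibited by the characteristic $3$ example of \autoref{section:counterexample}.
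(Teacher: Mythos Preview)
Your overall strategy---reduce to the Cartier case by a tame cyclic cover, get $S_2$ there, and descend via the direct-summand property of $\mu_m$-invariants---is exactly the paper's approach for the $\bQ$-Cartier reduction. Two points deserve correction.

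First, your codimension one argument is circular as written. You invoke adjunction to say $(S,\Diff_S(\Delta))$ is slc, but the very notion of an slc pair on $S$ presupposes that $S$ is demi-normal, which is what you are trying to prove. The paper avoids this by never performing adjunction onto $S$: instead it localizes $X$ at a codimension two point (a codimension one point of $S$), obtaining a two-dimensional slc pair on the localized $X$, and then proves a short lemma (\autoref{lemma:slc_surface_codim_1}) showing that the reduced boundary of an slc surface pair is nodal. Your reference to the classification input of \autoref{lemma:divisor_is_weakly_normal} is the right instinct, but that classification is applied on the ambient surface germ, not on $S$ via adjunction.

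Second, your appeal to \cite{ABP} on the cover $Y$ skips a step. The result of \cite{ABP} is stated for \emph{lc} (normal) threefolds, while your $Y$ is only demi-normal. The paper handles the Cartier case on a demi-normal threefold separately: it passes to the normalization $\bar{X}$, applies \cite{ABP} there to get that $\bar{S}$ is $S_2$ (hence demi-normal), and then descends demi-normality of $\bar{S}$ to $S$ using \cite[4.2.6]{Posva_Gluing_for_surfaces_in_mixed_char} via the map to $\bA^1$ cut out by a local equation of $S$. Only after this Cartier slc case is in hand does the paper run the cyclic cover argument. Your proof would go through once you insert this intermediate step (or otherwise justify the Cartier slc case). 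A minor stylistic point: your Reynolds operator $\frac{1}{m}\sum_{g\in\mu_m}g$ is informal since $\mu_m$ need not be a constant group scheme over the base; the paper instead uses that $\mu_m$ is diagonalizable, so taking invariants is exact and the invariant part is a direct summand \cite[I.2.11]{Jantzen_Representation_algebraic_groups}, which gives the same conclusion cleanly.
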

\begin{proof}
First let us assume that $S$ is Cartier. Let $\nu\colon (\bar{X},\bar{S}+\bar{\Delta}+\bar{D})\to (X,S+\Delta)$ be the normalization morphism, where $\bar{D}\subset \bar{X}$ is the conductor divisor. By the classification of lc surface singularities we know that $\bar{S}$ is nodal in codimension one, and by \cite[Theorem 1.5]{ABP} we know it is $S_2$ (see also \cite[Cor. 23]{Arv22}). Thus $\bar{S}$ is demi-normal, and by adjunction the boundary $(\bar{\Delta}+\bar{D})|_{\bar{S}}$ gives to $\bar{S}$ a structure of slc pair. We claim this implies that $S$ is also demi-normal. This is a local question, so we may localize on $X$ and assume that $X=\Spec A$ and $S=V(s)$ for some $s\in A$. The normalization $\bar{X}$ is also affine, say $\bar{X}=\Spec \bar{A}$, and $\bar{S}=V(s)\subset \bar{X}$. The section $s$ defines a morphism to $\bA^1$ on both $X$ and $\bar{X}$, fitting into the commutative diagram
        $$\begin{tikzcd}
        \bar{X}\arrow[dr, "\bar{\varphi}" below left]\arrow[rr,"\nu"] && X\arrow[dl, "\varphi"]\\
        & \bA^1. &
        \end{tikzcd}$$
Notice that $\varphi^{-1}(0)=S$ and $\bar{\varphi}^{-1}(0)=\bar{S}$. Localizing at $0\in\bA^1$, we may assume that the target of $\varphi$ and $\bar{\varphi}$ is the spectrum of a DVR. 
Then we are in situation to apply \cite[4.2.6]{Posva_Gluing_for_surfaces_in_mixed_char}, which concludes the Cartier case.

Next assume that $S$ is $\bQ$-Cartier, with Cartier index $m>0$ not divisible by the residue characteristics. We have that $S$ is nodal in codimension one by \autoref{lemma:slc_surface_codim_1} below. To show that $S$ is $S_2$ is a local property, so we may localize at any closed point. In particular we may assume that $\sO(-mS)\cong\sO_X$. Let $\pi\colon X'\to X$ be the cyclic cover induced by this isomorphism \cite[2.52-53 and 5.19]{Kollar_Mori_Birational_geometry_of_algebraic_varieties}. The scheme $X'$ is endowed with a $\mu_m$-action and the morphism $\mu_m$ is a geometric quotient. By assumption on $m$, the morphism $\pi$ is \'{e}tale in codimension one. So $X'$ is demi-normal. If $S'=\red(\pi^{-1}S)$ and $\Delta'=\red(\pi^{-1}\Delta)$ then $\pi^*(K_X+S+\Delta)=K_{X'}+S'+\Delta'$ and $S'$ is Cartier. So by \cite[2.43]{Kollar_Singularities_of_the_minimal_model_program} the pair $(X',S'+\Delta')$ is slc and by the first part of the proof $\sO_{S'}$ is $S_2$. Now observe that
        $$\pi_*\sI_{S'}=\bigoplus_{i=0}^{m-1}\sO(iS)[\otimes]\sO(-S).$$
The $\mu_m$-coaction on $\sO_{X'}=\bigoplus_{i=0}^{m-1}\sO(iS)$ makes $\pi_*\sI_{S'}$ into an $\mu_m$-equivariant sheaf, with invariant part $\sO(-S)=\sI_S$. Since $\mu_m$ is diagonalizable, taking invariants is an exact functor (see for example \cite[I.2.5, I.2.11]{Jantzen_Representation_algebraic_groups} for basics about diagonalizable group schemes). Thus the exact sequence
        $$0\to\sI_{S'}\to\sO_{X'}\to\sO_{S'}\to 0$$
yields the exact sequence
        $$0\to \sI_S\to \sO_X\to \left(\sO_{S'}\right)^{\mu_m}\to 0.$$
From this we see that $\left(\sO_{S'}\right)^{\mu_m}=\sO_S$. Since $(\sO_{S'})^{\mu_m}$ is a direct summand of $\sO_{S'}$ \cite[I.2.11]{Jantzen_Representation_algebraic_groups}, we deduce that $\sO_S$ is $S_2$. This concludes the proof.
\end{proof}

\begin{lemma}\label{lemma:slc_surface_codim_1}
Let $(X,S+\Delta)$ be a two-dimensional excellent slc pair, where $S$ is a reduced divisor. Then $S$ is a nodal curve.
\end{lemma}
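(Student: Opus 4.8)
The plan is to check nodality of the reduced curve $S$ one closed point at a time. Since $S$ is pure of dimension one and reduced, it is automatically $S_2$ (Cohen--Macaulay in dimension one is equivalent to having no embedded points), so $S$ will be a nodal curve as soon as each of its singular closed points is a node; and being a node is an \'etale-local condition, insensitive to localization and completion. All the hypotheses (demi-normality of $X$ and the slc condition on the pair) localize, so I would fix a singular closed point $s\in S$ and argue in a neighbourhood of $s$.

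The main device is the normalization $\nu\colon \bar X\to X$ with reduced conductor $\bar D\subset\bar X$, together with the presentation of $X$ as the gluing of $\bar X$ along an involution $\tau\colon \bar D\to\bar D$; this gluing description is valid in our excellent setting by \cite{Posva_Gluing_for_surfaces_in_mixed_char}. By the very definition of an slc pair, $(\bar X,\bar S+\bar\Delta+\bar D)$ is lc on the \emph{normal} surface $\bar X$, where $\bar S$ is the strict transform of $S$. Because the pair condition forbids any component of $S$ from lying in $\Sing X$, and in particular in the double locus $D$, the divisor $\bar S$ shares no component with $\bar D$, the intersection $S\cap D$ is finite, and $S=\nu(\bar S)$.

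The key geometric input is then the classification of lc surface singularities \cite[2.31]{Kollar_Singularities_of_the_minimal_model_program}, applied to $(\bar X,\bar S+\bar D+\dots)$: the reduced divisor $\bar S+\bar D$ is nodal, its components are regular where they meet, and two distinct components cross transversally, since three reduced branches through one point would violate log canonicity. Hence $\bar S$ is nodal away from $\bar D$, and at each point of $\bar S\cap\bar D$ the curve $\bar S$ is regular and transverse to $\bar D$. To descend along $\nu$, note first that over a singular point $s\notin D$ the morphism $\nu$ is an isomorphism, so $S\cong\bar S$ is nodal there. Over $s\in D$, the preimages of $s$ on $\bar S$ lie on $\bar D$, and since $S$ descends to $X$ the finite set $\bar S\cap\bar D$ is $\tau$-stable. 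If the fibre over $s$ is a genuine orbit $\{p,\tau(p)\}$ with $p\neq\tau(p)$, then near $s$ the curve $S$ is obtained by gluing two regular branches of $\bar S$ at a single point, and the pushout computation $\kappa[[u]]\times_\kappa\kappa[[v]]\cong\kappa[[u,v]]/(uv)$ shows this is a node; if instead $p=\tau(p)$, then $S$ is the image of a single regular branch and is smooth at $s$. In either case $S$ is nodal at $s$.

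I expect the main obstacle to lie in the last two steps: extracting from the lc surface classification the precise statement that $\bar S$ is regular and transverse to $\bar D$ at every intersection point (so that no branch point of $\bar S$ can sit on the double locus), and then verifying that the gluing involution $\tau$ identifies the points of $\bar S\cap\bar D$ in node-producing pairs. The bookkeeping for non-split nodes and for the inseparable residue-field extensions that can occur over imperfect residue fields in this excellent, mixed-characteristic setting is where the argument requires the most care.
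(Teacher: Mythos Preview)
Your proposal is correct and follows essentially the same route as the paper: normalize, apply the lc surface classification \cite[2.31]{Kollar_Singularities_of_the_minimal_model_program} to see that $\bar S+\bar D$ is nodal (so $\bar S$ is regular along $\bar S\cap\bar D$), and then observe that the gluing via $\tau$ identifies smooth points of $\bar S$ in pairs to produce nodes. The only cosmetic difference is that the paper first discards the images of the nodes of $\bar D$ (which $\bar S$ avoids) so that $\bar D$ becomes normal and $\tau$ is honestly an involution on $\bar D$ rather than on its normalization; you instead assert the involution on $\bar D$ directly, which is fine after the same localization.
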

\begin{proof}
Let $\nu\colon (\bar{X},\bar{D}+\bar{S}+\bar{\Delta},\tau)\to (X,S+\Delta)$ be the normalization. Then $\bar{D}$ is a nodal curve and $\bar{S}$ does not pass through the nodes $p_1,\dots,p_n$ of $D$ \cite[2.31]{Kollar_Singularities_of_the_minimal_model_program}. This implies that $S$ does not contain the images $\nu(p_i)$. So we may restrict on $X\setminus\{\pi(p_1),\dots,\pi(p_n)\}$ and assume that the curve $\bar{D}$ is normal. In particular $\tau$ is an involution on $\bar{D}$. Thus the morphism $\bar{X}\to X$ is the quotient by the finite equivalence relation $R(\tau)$ where the equivalence classes have cardinality at most $2$. Moreover $\bar{D}$ does not pass through the nodes of $\bar{S}$, by \cite[2.31]{Kollar_Singularities_of_the_minimal_model_program} again. Therefore $\bar{S}\to S$ is obtained by gluing pairs of smooth points of $\bar{S}$. It follows that $S$ has at worst nodal singularities.
\end{proof}

\subsection{Semi-normality of unions of lc centers in large characteristics}

\begin{theorem}\label{Mainsemi} Let $p_0$ be the positive integer occurring in \autoref{MainvanII}. Let $\Delta$ be a boundary with standard coefficients. Let $(X,\Delta)$ be a log canonical threefold whose closed points have perfect residue fields of characteristic $p>p_0$. Then the union of all log canonical centers of $(X,\Delta)$ is semi-normal.
\end{theorem}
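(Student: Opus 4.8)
The plan is to mimic the proof of \autoref{maintheorem}, replacing the minimal-lc-center/normality argument with a union-of-centers/semi-normality argument, and feeding the input vanishing \autoref{MainvanII} (valid for standard coefficients and $p>p_0$) in place of \autoref{mainvanish}. Let $W$ denote the union of all log canonical centers of $(X,\Delta)$, equipped with its reduced scheme structure; since semi-normality is a local property, we may localize at a closed point $x\in X$ and argue locally there. I would first apply \autoref{MainvanII} to produce a proper birational morphism $f\colon (X^{\mathrm{dlt}},\Delta_{X^{\mathrm{dlt}}}+E)\to (X,\Delta)$ with $E=\Exc(f)$ reduced, the pair dlt, crepant, and with the vanishing $R^1f_*\sO_{X^{\mathrm{dlt}}}(-E)=0$. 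The key point is that because $f$ is crepant and $E$ carries all the exceptional coefficient-one divisors, the image $f(E)$ is exactly the union of the positive-dimensional lc centers of $(X,\Delta)$; together with the lc centers lying on $\Delta$ itself this recovers $W$, so after possibly enlarging $E$ (or handling the divisorial lc centers on $X$ separately) we arrange $f(E)=W$.

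Next I would run the cohomological argument. Pushing forward the exact sequence
\[
0\to \sO_{X^{\mathrm{dlt}}}(-E)\to \sO_{X^{\mathrm{dlt}}}\to \sO_E\to 0
\]
and using $R^1f_*\sO_{X^{\mathrm{dlt}}}(-E)=0$ gives a short exact sequence
\[
0\to f_*\sO_{X^{\mathrm{dlt}}}(-E)\to \sO_X\to f_*\sO_E\to 0,
\]
where $f_*\sO_{X^{\mathrm{dlt}}}(-E)=\sI_W$ is the ideal sheaf of the reduced subscheme $W=f(E)$, so that $\sO_X\twoheadrightarrow f_*\sO_E$ has image exactly $\sO_W$. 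As in \autoref{maintheorem}, since $(X^{\mathrm{dlt}},\Delta_{X^{\mathrm{dlt}}}+E)$ is dlt with $p>5$, \autoref{lemma:divisor_is_weakly_normal} (via the $S_2$-ness of $E$ coming from \cite[Theorem 19]{Bernasconi_Kollar_vanishing_theorems_for_threefolds_in_char_>5}) shows $E$ is weakly normal, hence in particular semi-normal. The semi-normalization $W^{\mathrm{sn}}\to W$ then satisfies the universal property, and because $E$ is semi-normal and each component of $E$ dominates a component of $W$, the factorization machinery gives $\sO_W\hookrightarrow \sO_{W^{\mathrm{sn}}}\hookrightarrow f_*\sO_E$ inside $\sO_X$-algebras. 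Since $\sO_X\twoheadrightarrow f_*\sO_E$ factors through $\sO_W$, the inclusion $\sO_W\hookrightarrow \sO_{W^{\mathrm{sn}}}$ is forced to be surjective, whence $W\cong W^{\mathrm{sn}}$ and $W$ is semi-normal.

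The main obstacle, and the place where this argument genuinely differs from that of \autoref{maintheorem}, is the factorization step: \autoref{prop:factorization_in_weakly_normal_case} is stated for the \emph{weak} normalization and requires the target's normalization to be a universal homeomorphism (as in \autoref{corollary:factorization_in_normal_case}), whereas here we want the \emph{semi}-normalization, and $W$ need not be normal-up-to-universal-homeomorphism. I therefore expect to need a semi-normal analogue of \autoref{prop:factorization_in_weakly_normal_case}: a morphism $f\colon Y\to W$ from a semi-normal $Y$, each of whose components dominates a component of $W$, factors through $W^{\mathrm{sn}}$. This is exactly parallel to the weakly-normal statement, using part (1) of \autoref{prop:CA_characterization_of_weak_normality} (the $x^2,x^3\in A\Rightarrow x\in A$ criterion) together with \autoref{prop:CA_description_of_univ_homeo} to transport the relevant relations along $\pi$; I would either cite the universal property of the semi-normalization directly \cite[I.7.2.3]{Kollar_Rational_curves} or record this semi-normal factorization lemma separately. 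A secondary technical point to verify carefully is the identification $f(E)=W$, i.e. that $f_*\sO_{X^{\mathrm{dlt}}}(-E)$ is genuinely the \emph{radical} ideal of the reduced union of lc centers rather than some thickening; this follows because $E$ is reduced and the lc centers of $(X,\Delta)$ are precisely the crepant images of the coefficient-one strata, but it deserves an explicit sentence.
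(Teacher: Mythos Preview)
Your proposal is correct and follows essentially the same route as the paper. The paper also takes the dlt model from \autoref{MainvanII}, shows $E$ is $S_2$ (hence weakly normal, hence semi-normal), pushes forward the ideal sequence using the vanishing, and concludes via the commutative diagram that $\sO_C\hookrightarrow \sO_{C^{\mathrm{sn}}}$ is surjective.

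One clarification: the ``main obstacle'' you flag is not an obstacle at all. The universal property of the \emph{semi}-normalization \cite[I.7.2.3.3]{Kollar_Rational_curves} is strictly stronger than the weak-normal analogue in \autoref{prop:factorization_in_weakly_normal_case}: any morphism from a semi-normal scheme to $W$ factors through $W^{\mathrm{sn}}$, with no dominance hypothesis on components. (This is precisely the asymmetry noted in the remark following \autoref{prop:factorization_in_weakly_normal_case}.) The paper simply cites this universal property and moves on; you do not need to prove a separate lemma. Your second concern, about the identification $f_*\sO_{X^{\mathrm{dlt}}}(-E)=\sI_W$, is a fair point that the paper asserts without comment.
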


\begin{proof}
The proof follows similar lines as \autoref{maintheorem}, however using instead the vanishing theorem of \autoref{MainvanII}. Let $C$ denote the union of all log canonical centers of $(X,\Delta_X)$.
Let $g\colon (Z, \Delta_Z+E)\to (X,\Delta_X)$ be the dlt model described in \autoref{MainvanII}. Since $(Z, \Delta_Z+ E)$ is dlt \cite[Theorem 19]{Bernasconi_Kollar_vanishing_theorems_for_threefolds_in_char_>5} implies that $E$ is $S_2$. Therefore $E$ is weakly normal by \autoref{lemma:divisor_is_weakly_normal}.   Consider the short exact sequence associated to $E$ inside $Z$:
$$\begin{tikzcd} 0\arrow{r}& \mathcal{O}_Z(-E)\arrow{r} &\mathcal{O}_Z\arrow{r} &\mathcal{O}_E\arrow{r} & 0.\end{tikzcd}$$
By \autoref{MainvanII} it holds that $R^1g_*\mathcal{O}_Z(-E)=0$ and hence, pushing forward along $g$, on $X$ there is a short exact sequence
$$\begin{tikzcd} 0\arrow{r}& g_*\mathcal{O}_Z(-E)\arrow{r} &g_*\mathcal{O}_Z \arrow{r} &g_*\mathcal{O}_E\arrow{r} &0. \end{tikzcd}$$
 We have $g_*\mathcal{O}_Z=\mathcal{O}_X$ and $g_*\mathcal{O}_Z(-E)=\mathcal{I}_C$ is the ideal sheaf of $C$. Let $\varphi\colon C^{sn}\to C$ denote the semi-normalization. 
 Since $E$ is semi-normal, the universal property of the semi-normalization \cite[I.7.2.3.3]{Kollar_Rational_curves} implies that there exists a factorization $\mathcal{O}_C\hookrightarrow \varphi_*\mathcal{O}_{C^{sn}}\hookrightarrow g_*\mathcal{O}_E$. Therefore we have the commutative diagram
 $$\begin{tikzcd}&&\mathcal{O}_C\arrow[hook]{rd}\\
 &&&\varphi_*\mathcal{O}_{C^{s\nu}}\arrow[hook]{d}\\
 \mathcal{O}_X\arrow[twoheadrightarrow]{rruu}\arrow[twoheadrightarrow]{rrr}&&&g_*\mathcal{O}_E.
 \end{tikzcd}$$
    Since $\sO_X\twoheadrightarrow g_*\sO_E$ is surjective, we see that $\mathcal{O}_C\hookrightarrow \varphi_*\mathcal{O}_{C^{s\nu}}$ is also a surjection. Therefore $C\cong C^{sn}$, as claimed.
    \end{proof}

    
\section{Non-seminormal lc center in characteristic $3$}\label{section:counterexample}
Our starting point is the surface constructed in \cite{Bernasconi_Kawamata-Viehweg_vanishing_fails}, whose notation we follow. Let $k$ be an algebraically closed field of characteristic $3$. Then there exists a normal connected projective surface $T$ over $k$ such that:
    \begin{enumerate}
        \item $T$ is a projective $\bQ$-factorial Picard rank one del Pezzo surface;
        \item on $T$ there exists a proper curve $E_1$ such that $-K_T\equiv E_1$;
        \item on $T$ there is an ample $\bQ$-Cartier $\bZ$-divisor $A$ such that $h^1(T,\sO_T(-A))\neq 0$;
        \item $A\equiv -K_T$.
    \end{enumerate}
The pair $(T,E_1)$ is constructed in \cite[\S 3.1]{Bernasconi_Kawamata-Viehweg_vanishing_fails}. The ample divisor $A$ is defined in \cite[\S 3.2]{Bernasconi_Kawamata-Viehweg_vanishing_fails} and the non-vanishing is shown in \cite[Theorem 3.6]{Bernasconi_Kawamata-Viehweg_vanishing_fails}. The fact that $A\equiv -K_T$ follows from \cite[3.2]{Bernasconi_Kawamata-Viehweg_vanishing_fails}.

\begin{claim}
The pair $(T,E_1)$ is a plt log Calabi--Yau surface pair.
\end{claim}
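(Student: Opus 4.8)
The plan is to treat the two assertions separately. For the log Calabi--Yau condition, note first that $K_T+E_1$ is $\bQ$-Cartier because $T$ is $\bQ$-factorial, and that the numerical equivalence $-K_T\equiv E_1$ gives $K_T+E_1\equiv 0$. To upgrade this to $\bQ$-linear triviality I would use that a del Pezzo surface is rational, so that $\Pic^0(T)=0$ and $\Pic(T)$ is finitely generated; since $T$ has Picard rank one, the kernel of $\Pic(T)_{\bQ}\to\Num(T)_{\bQ}$ vanishes, and therefore $K_T+E_1\sim_{\bQ}0$. In particular $K_T+E_1\sim_{\bQ}0\cdot A$, i.e. we will be in the situation of \autoref{Sing 3fold Cones} with $r=0$, which is exactly what the later cone construction needs.

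For the plt property, I would first dispose of the locus away from $E_1$: a del Pezzo surface is klt, so $(T,0)$ --- and hence $(T,E_1)$ --- is klt on $T\setminus E_1$, where it is a fortiori plt. It then remains to check plt in a neighbourhood of $E_1$, and here the natural tool is inversion of adjunction for surface pairs over a perfect field, which is unconditional in this codimension-one setting and valid in characteristic $3$ (the surface different and the classification of surface singularities are characteristic-free; see \cite{Kollar_Singularities_of_the_minimal_model_program}). By the adjunction formula $K_{E_1}+\Diff_{E_1}(0)=(K_T+E_1)|_{E_1}\sim_{\bQ}0$, and the criterion reduces plt of $(T,E_1)$ near $E_1$ to the two statements that $E_1$ is normal and that $(E_1,\Diff_{E_1}(0))$ is klt.

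Thus everything reduces to the local geometry of $T$ along $E_1$, which I would extract directly from the explicit construction in \cite{Bernasconi_Kawamata-Viehweg_vanishing_fails}: one verifies there that $E_1$ is a normal (hence regular) curve and that the klt singularities of $T$ meeting $E_1$ contribute to $\Diff_{E_1}(0)$ only coefficients strictly less than $1$ (of the standard shape $1-\tfrac1m$), so that $(E_1,\Diff_{E_1}(0))$ is klt and $(T,E_1)$ is plt. This last verification --- reading off the normality of $E_1$ and the coefficients of the different from Bernasconi's surface --- is the only non-formal step, and is where I expect the actual work to lie; the remaining ingredients (the rationality of del Pezzo surfaces and surface adjunction) are standard and merely need to be assembled.
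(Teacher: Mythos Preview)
Your proposal is correct, but the route to the plt property is genuinely different from the paper's. The paper argues directly on the minimal resolution $\psi\colon S\to T$: it computes the pullbacks $\psi^*K_T$ and $\psi^*E_1$ explicitly in terms of the exceptional curves $C,F_i,G_1,H_1$ appearing in \cite{Bernasconi_Kawamata-Viehweg_vanishing_fails}, and then simply reads off that every exceptional coefficient in $\psi^*(K_T+E_1)$ is strictly less than $1$. In other words, the paper performs the discrepancy calculation by hand and is done in two lines once the pullback formulas are written down.

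Your approach via inversion of adjunction is conceptually cleaner and would also work, but note that it does not actually save any computation: to verify that $E_1$ is normal on $T$ and that the coefficients of $\Diff_{E_1}(0)$ are all below $1$, you would have to analyse precisely which exceptional curves of $\psi$ meet the strict transform of $E_1$ and with what multiplicities --- essentially the same intersection data that feeds into the paper's pullback formula. So both arguments bottom out in the same explicit inspection of Bernasconi's configuration; the paper just packages it as a single pullback identity rather than as an adjunction statement. One minor caution: your phrase ``a del Pezzo surface is klt'' is not automatic in general (think of a cone over an elliptic curve); here it is true because $T$ is constructed as a contraction of a smooth rational surface with all discrepancies equal to $-\tfrac13$, but you should cite this from \cite{Bernasconi_Kawamata-Viehweg_vanishing_fails} rather than assert it as a general fact. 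Your log Calabi--Yau argument (rationality $\Rightarrow$ numerical triviality upgrades to $\bQ$-linear triviality) is exactly what the paper has in mind, only spelled out more fully.
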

\begin{proof}
Let $\Psi\colon S\to T$ denote the minimal resolution of $T$. An easy computation shows that, in the notations of \cite[\S 3.2]{Bernasconi_Kawamata-Viehweg_vanishing_fails},  
    $$\psi^*(E_1)=\psi^{-1}_*E_1+\frac{1}{3}C+\frac{2}{3}H_1+\frac{1}{3}G_1+\frac{1}{3}F_1.$$ 
Since $\psi^*(K_T)=K_S+\sum_{i=1}^3\frac{1}{3}F_i+\frac{1}{3}C$ \cite[Proof of Proposition 3.2]{Bernasconi_Kawamata-Viehweg_vanishing_fails}, it follows that $(T,E_1)$ is plt. Since $K_T+E_1$ is numerically trivial, it follows that $K_T+E_1\sim_{\bQ}0$.
\end{proof}

Now let us define the divisors $L=K_T+A$ and $B=L+E_1$.

\begin{claim}\label{claim:numerically_trivial_div}
$L$ is numerically trivial and $h^1(T,\sO_T(L))\neq 0$.
\end{claim}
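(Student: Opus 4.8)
The plan is to treat the two assertions separately: numerical triviality is immediate, while the cohomological nonvanishing requires a duality argument adapted to the fact that $A$ is only $\bQ$-Cartier.

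First I would dispose of numerical triviality. By the fourth listed property of $T$ we have $A\equiv -K_T$, so $L=K_T+A\equiv K_T-K_T=0$, and thus $L$ is numerically trivial. (Since $T$ has Picard rank one, this in fact pins down the $\bQ$-linear equivalence class of $L$, but numerical triviality is all that is needed here.)

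For the nonvanishing, the idea is to produce $h^1(T,\sO_T(L))\neq 0$ out of the given $h^1(T,\sO_T(-A))\neq 0$ (the third listed property) via Serre duality. I would first note that $T$, being normal of dimension two, is Cohen--Macaulay and carries a dualizing sheaf which on a normal variety is the reflexive sheaf $\omega_T=\sO_T(K_T)$. Applying Serre duality for Cohen--Macaulay projective schemes to the reflexive sheaf $\sO_T(L)$ gives
\[
h^1(T,\sO_T(L))=\dim_k\Ext^1_T(\sO_T(L),\omega_T).
\]
Next I would identify the sheaf--$\Hom$: both $\sHom_T(\sO_T(L),\omega_T)$ and $\sO_T(-A)$ are reflexive of rank one, so it suffices to compare them on the regular locus $U\subseteq T$ (whose complement is a finite set), where they are line bundles and $\sHom(\sO_U(L),\sO_U(K_T))=\sO_U(K_T-L)=\sO_U(-A)$. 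Hence $\sHom_T(\sO_T(L),\omega_T)=\sO_T(-A)$.

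To pass from the sheaf--$\Hom$ to the global $\Ext^1$ I would run the local--to--global $\Ext$ spectral sequence
\[
E_2^{p,q}=H^p\bigl(T,\sExt^q_T(\sO_T(L),\omega_T)\bigr)\Longrightarrow \Ext^{p+q}_T(\sO_T(L),\omega_T).
\]
The sheaves $\sExt^q_T(\sO_T(L),\omega_T)$ for $q\geq 1$ are supported on the locus where $\sO_T(L)$ fails to be locally free, which lies in the finite set $\Sing T$; therefore $H^p(T,\sExt^q)=0$ for $p\geq 1$, $q\geq 1$, and the edge term $E_2^{1,0}=H^1(T,\sO_T(-A))$ receives no differentials and survives to $E_\infty$. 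The resulting filtration yields an injection $H^1(T,\sO_T(-A))\hookrightarrow \Ext^1_T(\sO_T(L),\omega_T)$, so that
\[
h^1(T,\sO_T(L))=\dim_k\Ext^1_T(\sO_T(L),\omega_T)\geq h^1(T,\sO_T(-A))\neq 0,
\]
the final nonvanishing being the third listed property of $T$.

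The step I expect to require the most care is exactly this passage through the $\Ext$ spectral sequence. Because $A$ is only $\bQ$-Cartier, the sheaves $\sO_T(L)$ and $\sO_T(-A)$ are genuinely non-invertible at the klt singular points of $T$, so the naive duality $h^1(\sO_T(L))=h^1(\sO_T(-A))$ need not hold on the nose; the discrepancy is measured by the higher $\sExt^q$ sheaves, which are supported on points and hence contribute only to $E_2^{0,1}$, leaving the desired subspace $E_2^{1,0}$ intact. The places to handle honestly are the identification $\omega_T=\sO_T(K_T)$ on the singular normal surface $T$ and the use of Serre duality in its Grothendieck--$\Ext$ form for the possibly non-locally-free sheaf $\sO_T(L)$; once these are in place, only the inequality $\geq$ (not equality) is needed, which the surviving edge term supplies.
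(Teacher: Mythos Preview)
Your argument is correct and matches the paper's one-line justification (``the non-vanishing holds by Serre duality''). Your caution about the local-to-global spectral sequence is in fact unnecessary: the reflexive sheaf $\sO_T(L)$ on the normal (hence Cohen--Macaulay) surface $T$ is maximal Cohen--Macaulay, so $\sExt^q_T(\sO_T(L),\omega_T)=0$ for all $q\geq 1$; the spectral sequence therefore degenerates and the naive equality $h^1(T,\sO_T(L))=h^1(T,\sO_T(-A))$ holds on the nose, not merely the inequality you derive.
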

\begin{proof}
Numerical triviality holds by construction, and the non-vanishing holds by Serre duality.
\end{proof}

\begin{proposition}\label{prop:non_injectivity_H^1}
$B$ is ample. Moreover $h^1(T,\sO_T(B))=0$ and $h^1(T,\sO_T(B-E_1))\neq 0$.
\end{proposition}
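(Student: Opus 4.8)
The plan is to treat the three assertions in increasing order of difficulty. The non-vanishing $h^1(T,\sO_T(B-E_1))\neq 0$ is immediate: by definition $B-E_1=L$, so this is exactly \autoref{claim:numerically_trivial_div} and nothing more is needed. For the ampleness of $B$, I would simply compute its numerical class: using $A\equiv -K_T$ and $E_1\equiv -K_T$ we get $B=K_T+A+E_1\equiv -K_T$. Since $T$ is del Pezzo, $-K_T$ is ample, and since $T$ is $\bQ$-factorial of Picard rank one, ampleness of a $\bQ$-Cartier divisor is detected by its numerical class; hence $B\equiv -K_T$ forces $B$ to be ample.

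The real content is $h^1(T,\sO_T(B))=0$, and the difficulty is structural: $B=K_T+(A+E_1)$ is an adjoint divisor with $A+E_1$ ample, so in characteristic $0$ this would follow at once from Kawamata--Viehweg vanishing. But that vanishing is precisely what fails on $T$ in characteristic $3$ (this is property $(3)$), so no general theorem is available and the computation must use the geometry of $T$. I would organize the argument around the restriction to $E_1$. Since $(T,E_1)$ is plt and log Calabi--Yau (established above), $E_1$ is normal and $(E_1,\Diff_{E_1}(0))$ is a log Calabi--Yau curve, so $E_1\cong\bP^1$. The divisorial short exact sequence
\[ 0\to \sO_T(L)\to \sO_T(B)\to \sG\to 0, \]
with $\sG$ supported on $E_1$, has $H^1(E_1,\sG)=0$ because $B$ is ample and $E_1\cong\bP^1$ (the degree of $\sG$ being governed by $B\cdot E_1>0$). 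The long exact sequence then identifies $H^1(T,\sO_T(B))$ with the cokernel of the connecting map $\delta\colon H^0(E_1,\sG)\to H^1(T,\sO_T(L))$, so the vanishing reduces to the surjectivity of $\delta$.

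Establishing this surjectivity is the main obstacle, and I do not expect it to follow formally: it encodes the fact that adding the single reduced divisor $E_1$ to $L$ "repairs" the pathological class detected in \autoref{claim:numerically_trivial_div}. I would prove it by explicit computation on the minimal resolution $\psi\colon S\to T$. Surface klt singularities are rational, so $R^1\psi_*\sO_S(\lceil \psi^*D\rceil)=0$ and $\psi_*\sO_S(\lceil \psi^*D\rceil)=\sO_T(D)$ for $D=L,B$ (the round-up being harmless over the exceptional locus, whose components form a negative-definite configuration of rational curves), whence $h^i(T,\sO_T(D))=h^i(S,\sO_S(\lceil \psi^*D\rceil))$. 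On the smooth rational surface $S$ one computes $h^1$ directly: combining $h^2(T,\sO_T(B))=h^0(T,\sO_T(K_T-B))=h^0(T,\sO_T(-A-E_1))=0$ (as $-A-E_1\equiv 2K_T$ is anti-ample) with a Riemann--Roch computation of $\chi$ and an estimate of $h^0$ on $S$, using the explicit pullback formulas for $K_T$ and $E_1$ recorded above and the resulting intersection numbers of the exceptional configuration $C,H_1,G_1,F_1$. I expect $\chi$ and $h^0$ to match and force $h^1(T,\sO_T(B))=0$, while the same bookkeeping reproduces $h^1(T,\sO_T(L))\neq 0$ from property $(3)$; the difference between the round-ups $\lceil \psi^*B\rceil$ and $\lceil \psi^*L\rceil$ is exactly where the unit of $E_1$ flips the answer. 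This explicit resolution computation is the step I expect to demand the most care.
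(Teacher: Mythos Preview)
Your treatment of the ampleness of $B$ and of $h^1(T,\sO_T(B-E_1))\neq 0$ is correct and essentially the same as the paper's. The divergence is entirely in how you attack $h^1(T,\sO_T(B))=0$.

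The paper's argument here is a one-liner you have missed. From the explicit definition of $A$ in \cite[\S 3.2]{Bernasconi_Kawamata-Viehweg_vanishing_fails} one has the linear equivalence $B=K_T+A+E_1=K_T+E_2+E_3$, where $E_2,E_3$ are the other two curves used to build $A$. Serre duality then gives $h^1(T,\sO_T(B))=h^1(T,\sO_T(-E_2-E_3))$, and this last group vanishes by \cite[Proposition 3.3]{Cascini_Tanaka_Plt_threefolds_with_non_normal_centres_in_char_2}. No restriction sequence, no resolution computation, no Riemann--Roch.

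Your proposed route---restriction to $E_1$, reducing to the surjectivity of a connecting map, and then a Riemann--Roch and $h^0$ computation on the minimal resolution $S$---is not unreasonable in principle, but it is both much longer and, as you acknowledge, speculative: you have not actually verified that $\chi$ and $h^0$ match, only that you ``expect'' them to. That is a genuine gap, since the whole point of this example is that naive vanishing expectations fail on $T$ in characteristic $3$. The identification $E_1\cong\bP^1$ and the estimate $H^1(E_1,\sG)=0$ are plausible in this particular example but also need to be checked from the explicit description of $T$. In short, your plan might be completable, but the paper bypasses all of this by recognizing $B$ in the form $K_T+E_2+E_3$ and invoking an already-established vanishing. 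The lesson is that when working with an explicitly constructed surface, it pays to look for an explicit linear equivalence before reaching for general cohomological machinery.
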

\begin{proof}
Since $L$ is numerically trivial and $E_1$ is ample, $B$ is ample. We have $h^1(T,\sO_T(B-E_1))=h^1(T,\sO_T(L))\neq 0$ by \autoref{claim:numerically_trivial_div}. If $E_2$ and $E_3$ are the two other curves on $T$ involved in the definition of $A$, then 
$B=K_T+E_2+E_3$ \cite[\S 3.2]{Bernasconi_Kawamata-Viehweg_vanishing_fails}. By Serre duality $h^1(T,\sO_T(B))= h^1(T,\sO_T(-E_2-E_3))$, and the latter $h^1$ is $0$ by \cite[Proposition 3.3]{Cascini_Tanaka_Plt_threefolds_with_non_normal_centres_in_char_2}.
\end{proof}

\begin{proposition}
The pair $(\aC(T,B),E_{1,\aC(T,B)})$ is lc, and $E_{1,\aC(T,B)}$ is a non-minimal lc center.
\end{proposition}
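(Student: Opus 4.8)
The plan is to treat the two assertions separately, feeding the cone into the machinery of \autoref{section:birational_sing_cones}; throughout, the ample $\bZ$-divisor building the cone is $B$ (ample by \autoref{prop:non_injectivity_H^1}), not the surface divisor $A$. For log canonicity I would first record that the plt log Calabi--Yau claim gives $K_T+E_1\sim_{\bQ}0$, so that $K_T+E_1\sim_{\bQ}rB$ holds with $r=0\leq 0$. Since $(T,E_1)$ is plt, hence lc, \autoref{Sing 3fold Cones}(2) applies with the ample divisor $B$ and yields that $(\aC(T,B),E_{1,\aC(T,B)})$ is lc everywhere, in particular at the vertex. As $E_1$ is a prime divisor, its cone $E_{1,\aC(T,B)}$ is a prime divisor of coefficient $1$ in this lc pair, hence an lc center.

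For non-minimality, the strategy is to produce the vertex $v$ as a strictly smaller lc center. Containment $v\in E_{1,\aC(T,B)}$ is immediate: the defining ideal $\bigoplus_{d\geq 0}H^0(T,\sO(dB-E_1))$ of $E_{1,\aC(T,B)}$ lies inside the irrelevant ideal cutting out $v$ (its degree-zero piece $H^0(T,\sO(-E_1))$ vanishes, $E_1$ being effective and nonzero), and the inclusion is strict since $v$ is a point while $E_{1,\aC(T,B)}$ is a surface. To realize $v$ as an lc center I would invoke the crepant relation of \autoref{section:birational_sing_cones}: for $f\colon\aBC(T,B)\to\aC(T,B)$ as in \autoref{diagram:cones},
$$f^*\left(K_{\aC(T,B)}+E_{1,\aC(T,B)}\right)=K_{\aBC(T,B)}+E_{1,\aBC(T,B)}+(1+r)X^-.$$
With $r=0$ the section $X^-$ occurs with coefficient exactly $1$; it is $f$-exceptional, being contracted to $v$ by \autoref{propertiesofcone}(4); and since this pair is crepant to the lc pair found above it is itself lc, so $X^-$ is a genuine log canonical place with $f(X^-)=\{v\}$. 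Hence $v$ is an lc center, and as $v\subsetneq E_{1,\aC(T,B)}$ the divisor $E_{1,\aC(T,B)}$ is non-minimal.

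I expect the delicate point to be pinning the coefficient of $X^-$ at exactly $1$, which is precisely where the log Calabi--Yau condition enters. Because $B$ is ample (hence $B\not\equiv 0$), the value $r$ in $K_T+E_1\sim_{\bQ}rB$ is forced to be $0$, so $1+r=1$ and $X^-$ is an lc place over $v$. Had we instead been in the log Fano situation $r<0$, the same relation would give $X^-$ coefficient $1+r<1$, the cone would be plt at the vertex by \autoref{lc klt assuming inv adjunction}, and $v$ would fail to be an lc center; it is exactly the numerical triviality of $K_T+E_1$ that forces the vertex to be an lc center sitting strictly inside the divisorial center $E_{1,\aC(T,B)}$.
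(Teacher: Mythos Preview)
Your proof is correct and follows the same line as the paper's: both deduce log canonicity from \autoref{Sing 3fold Cones} using $K_T+E_1\sim_{\bQ}0$ (so $r=0$), and both exhibit the vertex as a smaller lc center via the crepant pullback formula, observing that the section $X^-$ (the paper writes $T^-$) appears with coefficient $1+r=1$ and is contracted to $v$. Your explicit verification that $v\in E_{1,\aC(T,B)}$ and your closing remark on the necessity of $r=0$ are additional commentary; the paper omits both and simply asserts that the vertex is the minimal lc center.
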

\begin{proof}
The pair $(\aC(T,B),E_{1,\aC(T,B)})$ is lc by \autoref{Sing 3fold Cones}, and obviously $E_{1,\aC(T,B)}$ is an lc center. However it is not minimal. Consider the partial resolution $f\colon \aBC(T,B)\to \aC(T,B)$ with exceptional divisor $T^-$. As $K_X+E\sim_{\bQ} 0$, by \cite[(2.4.2)]{Bernasconi_Kawamata-Viehweg_vanishing_fails} we have
        $$K_{\aBC(T,B)}+f^{-1}_*E_{1,\aC(T,B)}+T^-=
        f^*(K_{\aC(T,B)}+E_{1,\aC(T,B)}).$$
Therefore $T^-$ is an lc place of $(\aC(T,B),E_{1,\aC(T,B)})$ and the minimal lc center is the vertex of the cone.
\end{proof}

\begin{proposition}
$E_{1,\aC(T,B)}$ is not $S_2$ nor seminormal.
\end{proposition}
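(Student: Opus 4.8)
The plan is to deduce both failures from the cohomological criterion \autoref{prop:S_2_at_vertex} together with the partial-normalization description \autoref{corollary:partial_normalization_cone}. Throughout I write $\psi_d\colon H^0(T,\sO(dB))\to H^0(E_1,\sE_d)$ and $\alpha_d\colon H^1(T,\sO(dB-E_1))\to H^1(T,\sO(dB))$ for the maps induced by the restriction sequence \autoref{eqn:restrictrion_exact_seq}, and I set $R=\bigoplus_{d\geq 0}\im\psi_d$ (the homogeneous coordinate ring of $E_{1,\aC(T,B)}$) and $R'=\bigoplus_{d\geq 0}H^0(E_1,\sE_d)$.

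First I would establish the failure of $S_2$. By \autoref{prop:S_2_at_vertex}, $E_{1,\aC(T,B)}$ is $S_2$ at the vertex if and only if $\alpha_d$ is injective for every $d\geq 0$. Taking $d=1$, \autoref{prop:non_injectivity_H^1} gives $h^1(T,\sO(B-E_1))\neq 0$ while $h^1(T,\sO(B))=0$; hence $\alpha_1$ has nonzero source and zero target, so it is not injective. Thus $E_{1,\aC(T,B)}$ fails $S_2$ at the vertex, and a fortiori is not $S_2$. Equivalently, $\coker\psi_1=\ker\alpha_1$ is nonzero, so $R_1\subsetneq R'_1$.

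For the failure of seminormality I would argue by contradiction through the universal property of the semi-normalization. By \autoref{corollary:partial_normalization_cone} the map $\varphi'\colon\Spec_k R'\to \Spec_k R=E_{1,\aC(T,B)}$ is a finite birational homeomorphism inducing equalities of residue fields, so it satisfies conditions (2) and (3) of \autoref{def:wk_and_semi_normality}. Moreover $R\subseteq R'$ is a finite ring extension with the same total ring of fractions, since the two graded domains agree in all large degrees by \autoref{lemma:psi_eventually_surjects}; hence $R'$ is contained in the normalization $\bar R$ of $R$, so the normalization of $E_{1,\aC(T,B)}$ factors through $\varphi'$, giving condition (1). If $E_{1,\aC(T,B)}$ were seminormal, its semi-normalization would be the identity, and the universal property \autoref{def:wk_and_semi_normality}(4) applied to $\varphi'$ would produce a section of $\varphi'$; since $\varphi'$ is a finite bijective homeomorphism, such a section is forced to be an isomorphism, i.e. $R=R'$. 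But $R=R'$ means every $\psi_d$ is surjective, which by the proof of \autoref{prop:S_2_at_vertex} is exactly the $S_2$ condition at the vertex, contradicting the first step. Therefore $E_{1,\aC(T,B)}$ is not seminormal.

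The cohomological input is already packaged in \autoref{prop:non_injectivity_H^1}, and the equivalence $R=R'\Leftrightarrow S_2$ is immediate from the proof of \autoref{prop:S_2_at_vertex}, so those are routine. The step needing the most care is the seminormality reduction: one must verify that $\varphi'$ genuinely meets all three defining conditions of \autoref{def:wk_and_semi_normality}, in particular that the normalization factors through it (which rests on $R'\subseteq\bar R$), and that a section of a finite universally bijective morphism must be an isomorphism. Alternatively one could bypass the universal property and exhibit directly, via \autoref{prop:CA_characterization_of_weak_normality}(1), a homogeneous $x\in R'\setminus R$ with $x^2,x^3\in R$; but the universal-property route is cleaner and reuses the already-established \autoref{corollary:partial_normalization_cone}.
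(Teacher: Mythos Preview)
Your proof is correct and follows essentially the same approach as the paper: both deduce the $S_2$ failure from \autoref{prop:S_2_at_vertex} via the non-injectivity of $\alpha_1$ supplied by \autoref{prop:non_injectivity_H^1}, and both deduce non-seminormality from \autoref{corollary:partial_normalization_cone} by observing that $\Spec R'\to\Spec R$ is a finite birational homeomorphism with trivial residue field extensions yet is not an isomorphism because $R_1\subsetneq R'_1$. The only difference is that you unpack the final implication through the universal property and a section argument, whereas the paper simply asserts that the existence of such a non-isomorphism forces non-seminormality; your extra justification is sound (the section is a surjective closed immersion into the reduced scheme $\Spec R'\subset\Spec\bar R$, hence an isomorphism).
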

\begin{proof}
The $S_2$ condition fails by \autoref{prop:S_2_at_vertex}, since by \autoref{prop:non_injectivity_H^1} the natural map $H^1(T,\sO_T(B-E_1))\to H^1(T,\sO_T(B))$ is not injective. 
We can say more: consider the exact sequence of cohomology
        $$H^0(T,\sO_T(dB))\overset{\psi_d}{\longrightarrow}H^0(E_1,\sE_d)
        \to H^1(T,\sO_T(dB-E_1))\overset{\alpha_d}{\longrightarrow} H^1(T,\sO_T(dB))$$
obtained from the restriction sequences \autoref{eqn:restrictrion_exact_seq} in our setting. Let $E'=\Spec_k \bigoplus H^0(E,\sE_d)$ with vertex $v'$. Then by \autoref{corollary:partial_normalization_cone} we have a finite homeomorphism $\pi\colon E'\to E_{1,\aC(T,B)}$ which is an isomorphism outside the vertex and induces an isomorphism $k(v)\cong k(v')$. Now $\alpha_1$ is not injective by \autoref{prop:non_injectivity_H^1}, so $\psi_1$ is not surjective: this implies that the inclusion $\bigoplus_{d\geq 0}\im\psi_d\subset \bigoplus H^0(E,\sE_d)$ is not an equality. This implies in turn that $\pi\colon E'\to E_{1,\aC(T,B)}$ is not an isomorphism, and therefore $E_{1,\aC(T,B)}$ is not seminormal.
\end{proof}

\begin{corollary}\label{corollary:counterex}There exists a log canonical threefold singularity $(y\in Y,\Delta_Y)$ over a perfect field of characteristic $3$ such that the union of all log canonical centers of $(Y,\Delta_Y)$ is a divisor which is not semi-normal. 
\end{corollary}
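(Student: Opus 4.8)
The plan is to take for $(y\in Y,\Delta_Y)$ the vertex of the cone pair constructed above. Concretely, set $Y=\aC(T,B)$, let $\Delta_Y=E_{1,\aC(T,B)}$ be the reduced cone divisor, and let $y=v$ be the vertex; since $T$ is a surface and $B$ is an ample $\bQ$-Cartier $\bZ$-divisor, $Y$ is a normal affine threefold over the perfect field $k$ of characteristic $3$, and we may localize at $v$ to view it as a singularity. That $(Y,\Delta_Y)$ is log canonical is exactly the content of the proposition above asserting lc-ness of $(\aC(T,B),E_{1,\aC(T,B)})$, which applies \autoref{Sing 3fold Cones} with $r=0$, legitimately since $K_T+E_1\sim_\bQ 0$ and $(T,E_1)$ is plt, hence lc.

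The substance of the argument is to identify the union of all log canonical centers of $(Y,\Delta_Y)$ with the divisor $\Delta_Y$. First I would analyze the punctured cone: by \autoref{propertiesofcone} the complement $Y\setminus\{v\}$ is isomorphic to $\Spec_T\bigoplus_{d\in\bZ}\sO_T(dB)$, which over the Cartier locus of $B$ is a $\bG_m$-bundle over $T$. Since log canonical centers are compatible with such smooth (bundle) morphisms, and the reduced boundary $E_1$ is the unique log canonical center of the plt pair $(T,E_1)$, the only center meeting $Y\setminus\{v\}$ is $E_{1,\aC(T,B)}$. It then remains to understand the centers through the vertex, and here I would reuse the $\bG_m$-equivariance exploited in the proof of \autoref{Sing 3fold Cones}: the non-klt locus, and hence every log canonical center, is $\bG_m$-invariant, so each center is either the cone over a log canonical center of $(T,E_1)$ or the fixed point $v$ itself. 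As the proposition above already identifies $v$ as the minimal log canonical center, the complete list of centers is $E_{1,\aC(T,B)}$ together with $\{v\}$.

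Because $E_1$ is a nonempty divisor on $T$, its cone $E_{1,\aC(T,B)}$ contains the vertex, so the union of all log canonical centers of $(Y,\Delta_Y)$ equals $E_{1,\aC(T,B)}$, a divisor inside the threefold $Y$. This divisor fails to be seminormal by the final proposition above, and the corollary follows.

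I expect the one genuinely delicate point to be the exclusion of spurious centers through $v$: a priori a one-dimensional $\bG_m$-orbit closure through the vertex could be an extra log canonical center, and ruling this out is precisely where the $\bG_m$-invariance of the lc structure, combined with the fact that the plt surface pair $(T,E_1)$ has no zero-dimensional log canonical center, is indispensable. Everything else is a direct reading-off from the propositions already established.
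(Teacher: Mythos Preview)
Your setup and overall plan coincide with the paper's: take $(Y,\Delta_Y)=(\aC(T,B),E_{1,\aC(T,B)})$, localize at the vertex, and argue that every log canonical center is contained in $\Delta_Y$. The non-seminormality of $\Delta_Y$ is then quoted from the previous proposition, exactly as in the paper.

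The gap is in the step where you claim that the only lc center meeting $Y\setminus\{v\}$ is $E_{1,\aC(T,B)}$. You justify this by saying that $\rho^0\colon Y\setminus\{v\}\to T$ is a $\bG_m$-bundle and that lc centers are compatible with smooth bundle morphisms. But $\rho^0$ is only smooth over the Cartier locus of $B$; over the finitely many points where $B$ is merely $\bQ$-Cartier, the fibers may be singular, and your $\bG_m$-invariance argument only tells you that a putative extra lc center is a union of such fibers, not that such fibers are forbidden. Your proposed remedy in the last paragraph --- that $(T,E_1)$ has no zero-dimensional lc center --- presupposes a bijection between lc centers upstairs and downstairs, which is exactly what fails to be automatic over the non-Cartier locus. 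So the delicate point you flag is real, but your stated resolution does not close it.

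The paper handles this differently and avoids analyzing $\rho^0$ over bad points. It argues by contradiction on the crepant model $f\colon(\aBC(T,B),E_{1,\aBC}+T^-)\to(\aC(T,B),E_{1,\aC})$: a hypothetical lc center $C\not\subset E_{1,\aC}$ lifts to an lc center $C'\not\subset E_{1,\aBC}\cup T^-$; since $\aBC(T,B)$ is $\bQ$-factorial, \cite[2.27]{Kollar_Mori_Birational_geometry_of_algebraic_varieties} lets one drop $E_{1,\aBC}$ from the boundary without changing the discrepancy along $C'$; then inversion of adjunction (\autoref{remark:inv_adj_for_threefolds}) applied to the klt surface $(T,0)$ shows $(\aBC(T,B),T^-)$ is plt near $T^-$, contradicting the existence of $C'$. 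If you want to salvage your approach, the same two ingredients are what you need: $\bQ$-factoriality of $Y\setminus\{v\}\cong\aBC(T,B)\setminus T^-$ together with klt-ness of $(Y\setminus\{v\},0)$ (which follows from \autoref{Sing 3fold Cones} applied to $(T,0)$ with $r<0$) forces every lc center of $(Y\setminus\{v\},E_{1,\aC})$ into $\Supp E_{1,\aC}$ via \cite[2.27]{Kollar_Mori_Birational_geometry_of_algebraic_varieties}. Without invoking $\bQ$-factoriality and some form of inversion of adjunction, the $\bG_m$-equivariance alone is not enough.
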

    \begin{proof}We let $Y=\aC(T,B)$ and $\Delta_Y=E_{1,\aC(T,B)}$, in the notation of this subsection. We consider $y\in Y$ to be the cone point. We have already showed that $(Y,\Delta_Y)$ is log canonical around $y$ and that $\Delta_Y$ is a log canonical center of this pair. We will show that any other log canonical center of $(Y,\Delta_Y)$ is contained in $\Delta_Y$. Suppose in order to arrive at a contradiction that $C\subset Y$ is a log canonical center of $(Y,\Delta_Y)$ and that $C$ is not contained in $\Delta_Y$. Since we work in a neighbourhood of $y$ we may assume that $y\in C$.
    
    As already mentioned in the proof of \autoref{lc klt assuming inv adjunction}, the morphism 
            $$f\colon (\aBC(T,B),E_{\aBC(T,B)}+(1+r)T^-)\longrightarrow
            (\aC(T,B),E_{\aC(T,B)})$$
    is crepant, where $r\in\bQ$ is such that $K_T+E\sim_{\bQ}rB$. By \autoref{claim:numerically_trivial_div} we have $r=0$. Thus $(\aBC(T,B),E_{\aBC(T,B)}+T^-)$ has a log canonical center $C'$ such that $f(C')=C$. Since $C\neq \{y\}$ we have $C'\nsubseteq T^-$, and since $C\nsubseteq E_{\aC(T,B)}$ we have $C'\nsubseteq E_{\aBC(T,B)}$. 
    
    Since $T$ is $\mathbb{Q}$-factorial, the cone $\aBC(T,B)$ is $\mathbb{Q}$-factorial \cite[Proof of Proposition 2.4]{Bernasconi_Kawamata-Viehweg_vanishing_fails}. So if $E$ is an lc place over $(\aBC(T,B),E_{\aBC(T,B)}+T^-)$ with center $C'$, we have the equality of discrepancies
            $$-1=a(E;\aBC(T,B), E_{\aBC(T,B)}+T^-)=
            a(E;\aBC(T,B),T^-)$$
    by \cite[Lemma 2.27]{Kollar_Mori_Birational_geometry_of_algebraic_varieties}. This shows that $C'$ is also a log canonical center of $(\aBC(T,B),T^-)$.
    
    Now we apply inversion of adjunction for $(\aBC(T,B),T^-)$ along $T^-$. Notice that $K_T\sim_\bQ -A$ is anti-ample, so $K_T\sim_\bQ rB$ for some $r\in\bQ_{-}$. So the desired inversion of adjunction holds by \autoref{remark:inv_adj_for_threefolds}. As $T$ is klt we obtain by \autoref{lc klt assuming inv adjunction}
    that $(\aBC(T,B),T^-)$ is plt in a neighbourhood of $T^-$. This fact contradicts what we obtained above, namely that $C'\neq T^-$ is also a log canonical center of $(\aBC(T,B),T^-)$. We have reached a contradiction, and therefore the proof is complete.
    \end{proof}


\section{Sufficient conditions for a non (semi-)normal boundary}\label{section:sufficient_conditions}
To conclude this note, we describe two sets of conditions that are sufficient to obtain examples of cone pairs with non-(semi-)normal reduced boundary. In both cases it boils down to a single cohomology statement on the base of the cone. Our criteria are independent of the dimension.

Let us fix the notations. The assumption on the Picard rank is not strictly speaking necessary, but it makes our conditions easier to state.

\begin{notation}\label{notation:counterexample}
We let $X$ be a projective normal variety of Picard rank one over an algebraically closed field and $E$ a prime effective $\bQ$-Cartier $\bZ$-divisor on $X$. We assume that $(X,E)$ is lc and that $K_X+E$ is numerically trivial or anti-ample.
\end{notation}

 If $A$ is an ample $\bQ$-Cartier $\bZ$-divisor divisor on $X$, then we have $K_X+E\sim_{\bQ}-rA$ for some positive rational number $r\in\bQ_{\geq 0}$. So $(\aC(X,A),E_{\aC(X,A)})$ is a pair \cite[2.4]{Bernasconi_Kawamata-Viehweg_vanishing_fails}. If we additionally assume inversion of adjunction in the form of \autoref{inversion} then it follows that this pair is actually lc by \autoref{lc klt assuming inv adjunction}. Below we give two sets of sufficient conditions for $E_{\aC(X,A)}$ to be non-$S_2$ and non-seminormal at the vertex.

\subsection{Failure of Kawamata--Viehweg vanishing for a nef divisor}

The first set of sufficient conditions is the one that we have applied to get our counter-example in characteristic $3$.

\begin{proposition}\label{prop:suffi_cond_1}
Situation as in \autoref{notation:counterexample}. Assume that there exists a 
nef $\bQ$-Cartier $\bZ$-divisor $L$ such that $H^1(X,\sO(L))\neq 0$. Then there exists $n\geq 1$ such that the cone boundary $E_{\aC(X,L+nE)}$ is not $S_2$ nor seminormal.
\end{proposition}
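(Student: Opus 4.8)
The plan is to apply the cone construction of \autoref{diagram:cones} with the ample divisor $A=L+nE$ for a suitable integer $n\geq 1$, and to read off both the failure of $S_2$ and of seminormality at the vertex from a single non-injectivity in cohomology, exactly as in the characteristic $3$ example of \autoref{section:counterexample}. First I would check that $A=L+nE$ is an ample $\bQ$-Cartier $\bZ$-divisor for every $n\geq 1$, so that the construction applies: since $X$ has Picard rank one and $E$ is a nonzero effective $\bQ$-Cartier divisor, its class in $N^1(X)_\bR\cong\bR$ is positive, hence $E$ is ample; as $L$ is nef, $L+nE$ is then ample. The reduced boundary in question is $E_{\aC(X,L+nE)}$, and by \autoref{prop:S_2_at_vertex} it fails to be $S_2$ at the vertex as soon as, for some $d\geq 0$, the map
$$\alpha_d\colon H^1(X,\sO_X(dA-E))\longrightarrow H^1(X,\sO_X(dA))$$
induced by \autoref{eqn:restrictrion_exact_seq} is not injective.

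The crux is to produce, for a good choice of $n$, a failure of injectivity already at $d=1$. For the cone over $A=L+nE$ the sequence \autoref{eqn:restrictrion_exact_seq} at $d=1$ reads $0\to\sO_X(L+(n-1)E)\to\sO_X(L+nE)\to\sE_1\to 0$, so $\alpha_1$ is exactly the map
$$\beta_n\colon H^1(X,\sO_X(L+(n-1)E))\longrightarrow H^1(X,\sO_X(L+nE))$$
induced by the inclusion $\sO_X(-E)\hookrightarrow\sO_X$. Writing $V_k:=H^1(X,\sO_X(L+kE))$, I would argue by a pigeonhole on the tower $V_0\to V_1\to V_2\to\cdots$ with transition maps $\beta_k$. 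By hypothesis $V_0=H^1(X,\sO_X(L))\neq 0$, while Serre vanishing for the ample divisor $E$ (applied to the finitely many twists $\sO_X(L+jE)$ for $0\le j<m$, where $m$ is the Cartier index of $E$, exactly as in \autoref{lemma:psi_eventually_surjects}) gives $V_k=0$ for all $k\gg 0$. Were every $\beta_k$ injective we would obtain an injection $V_0\hookrightarrow V_k=0$ for $k\gg 0$, contradicting $V_0\neq 0$. Hence some $\beta_n$ with $n\geq 1$ is not injective; fix such an $n$ and set $A=L+nE$. Then $\alpha_1=\beta_n$ is not injective, and \autoref{prop:S_2_at_vertex} shows $E_{\aC(X,A)}$ is not $S_2$ at the vertex.

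For seminormality I would use the same $n$ together with \autoref{corollary:partial_normalization_cone}. From the exact sequence $H^0(X,\sO_X(A))\xrightarrow{\psi_1}H^0(E,\sE_1)\to H^1(X,\sO_X(A-E))\xrightarrow{\alpha_1}H^1(X,\sO_X(A))$ the non-injectivity of $\alpha_1$ is equivalent to the non-surjectivity of $\psi_1$, so the inclusion $\bigoplus_{d\geq 0}\im\psi_d\subsetneq\bigoplus_{d\geq 0}H^0(E,\sE_d)$ is strict. By \autoref{corollary:partial_normalization_cone} the induced morphism $\pi\colon E'=\Spec_k\bigoplus_{d\geq 0}H^0(E,\sE_d)\to E_{\aC(X,A)}$ is a finite birational homeomorphism inducing isomorphisms on residue fields, which is not an isomorphism; since the seminormalization is the maximal such extension (\autoref{def:wk_and_semi_normality}), this forces $E_{\aC(X,A)}$ to be non-seminormal. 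The one genuinely delicate point is the bookkeeping with the reflexive sheaves $\sO_X(L+kE)$ when $E$ is only $\bQ$-Cartier: in particular checking that tensoring by $\sO_X(mE)$ is compatible with the grading so that Serre vanishing applies, and that \autoref{eqn:restrictrion_exact_seq} at $d=1$ has the asserted form. Everything else is the pigeonhole argument plus the two quoted criteria.
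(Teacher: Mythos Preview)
Your proposal is correct and follows essentially the same route as the paper: the paper simply takes $n$ minimal with $H^1(X,\sO(L+nE))=0$ and $H^1(X,\sO(L+(n-1)E))\neq 0$ (which exists by Serre vanishing, since $V_0\neq 0$), so that $\alpha_1$ has nonzero source and zero target, and then invokes \autoref{prop:S_2_at_vertex} and \autoref{corollary:partial_normalization_cone} exactly as you do. Your pigeonhole on the tower $V_0\to V_1\to\cdots$ is a mild variant of this same step, yielding a possibly different $n$ but the identical conclusion.
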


Indeed, take $n$ minimal such that 
        $$H^1(S,\sO(L+nE))=0 \quad \text{but}\quad H^1(S,\sO(L+(n-1)E))\neq 0.$$
By assumption on $L$ we have $n\geq 1$. Then $A=L+nE$ is ample, and it follows from \autoref{prop:S_2_at_vertex} and \autoref{corollary:partial_normalization_cone} that $E_{\aC(X,A)}$ is not $S_2$ nor seminormal.

\subsection{Failure of Kawamata--Viehweg for an anti-ample divisor}

The second set of sufficient conditions grew out of our efforts to find an alternative to \autoref{prop:suffi_cond_1}.

\begin{proposition}\label{prop:failure_KVV_anti_ample}
Situation as in \autoref{notation:counterexample}. Assume that there exists an ample $\bQ$-Cartier $\bZ$-divisor $A$ with the following properties:
    \begin{enumerate}
        \item $H^1(X,\sO(-A))\neq 0$, and
        \item $A-(\ind-1)(E)\cdot E$ is ample, where $\ind(E)$ is the Cartier index of $E$.
    \end{enumerate}
Then there exist $d>0$ and $0\leq i\leq \ind(E)-1$ such that $B=dA-iE$ is ample and the cone boundary $E_{\aC(X,B)}$ is not $S_2$ nor seminormal.
\end{proposition}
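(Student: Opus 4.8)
The plan is to reduce the statement to the single cohomological criterion of \autoref{prop:S_2_at_vertex}. For any ample $B$ that criterion shows $E_{\aC(X,B)}$ fails to be $S_2$ at the vertex as soon as the restriction map $\alpha_e\colon H^1(X,\sO(eB-E))\to H^1(X,\sO(eB))$ is non-injective for some $e\geq 0$; equivalently, as soon as $\psi_e$ is not surjective. As in the treatment of the characteristic $3$ example in \autoref{section:counterexample}, the very same non-surjectivity makes the inclusion $\bigoplus_{d\geq 0}\im\psi_d\subsetneq\bigoplus_{d\geq 0}H^0(E,\sE_d)$ strict, so that the finite homeomorphism of \autoref{corollary:partial_normalization_cone} is not an isomorphism and $E_{\aC(X,B)}$ is not seminormal either. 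Hence it is enough to exhibit one admissible polarization $B=dA-iE$ (with $d\geq 1$, $0\leq i\leq m-1$, where $m=\ind(E)$) and one degree $e$ for which $\alpha_e$ has a kernel.

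The ampleness part of the conclusion is handled entirely by hypothesis (2). Since $E$ is effective and $X$ has Picard rank one, the assumption that $A-(m-1)E$ be ample propagates to the amplitude of $A-iE$ for every $0\leq i\leq m-1$, and therefore $B=dA-iE=(d-1)A+(A-iE)$ is ample for every $d\geq 1$ and every such $i$. Thus every pair $(d,i)$ in the admissible range produces an ample polarization, and the problem becomes purely cohomological.

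To produce the kernel I would run the first-failure mechanism of \autoref{prop:suffi_cond_1}, but seeded by the anti-ample non-vanishing (1). The subtlety is that, because of (2), the class $-A$ itself cannot occur as a sheaf $\sO(eB-E)$ with $i\leq m-1$: condition (2) forces $A$ to dominate $(m-1)E$ numerically, which rules out matching $eB-E\equiv -A$ for $d\geq 1$. I would therefore first convert (1) by Serre duality into a non-vanishing $H^{\dim X-1}(X,\sO(K_X+A))\neq 0$, and use the defining relation $K_X+E\sim_{\bQ}-rA$ to rewrite the relevant class as a $\bZ$-combination of $A$ and $E$ whose $A$-coefficient is positive, i.e.\ one lying in the region spanned by the admissible polarizations. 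Twisting this class by $E$ until its first cohomology vanishes (which happens by Serre vanishing, as $E$ is ample) locates a degree at which the relevant $H^1$ jumps from non-zero to zero: the divisor just before the jump serves as $eB-E$ and the divisor at the jump as $eB$, forcing $\alpha_e$ to map a non-zero group into a vanishing one. Finally, because $mE$ is Cartier, multiples of $mE$ may be transported across the restriction sequence \autoref{eqn:restrictrion_exact_seq} freely, which lets me reduce the $E$-coefficient modulo $m$, read off the residue as $i\in\{0,\dots,m-1\}$, and absorb the quotient into $d$ and $e$.

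The step I expect to be the main obstacle is exactly this last reconciliation: arranging that the cohomological jump occurs at a place where the $E$-coefficient, after reduction modulo $m$, is a genuine residue $i\leq m-1$ while the $A$-coefficient stays positive and $B$ remains ample. This is where both hypotheses are used at once---(1) to guarantee that a jump exists at all, and (2) both to pin down the admissible window for $i$ and to secure ampleness---and it will require care with integral (not merely $\bQ$-)linear equivalence on the Picard-rank-one variety $X$, most likely after replacing $A$ by a controlled multiple and choosing the degree $e$ rather than fixing $e=1$. Once a compatible triple $(d,i,e)$ is produced, \autoref{prop:S_2_at_vertex} and \autoref{corollary:partial_normalization_cone} conclude the proof exactly as in \autoref{section:counterexample}, giving that $E_{\aC(X,B)}$ is neither $S_2$ nor seminormal at the vertex.
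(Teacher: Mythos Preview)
Your approach differs fundamentally from the paper's, and the gap you flag at the end is real and not easily closed.

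The Serre-duality step only converts $H^1(X,\sO(-A))\neq 0$ into a statement about $H^{\dim X-1}$, which equals $H^1$ only when $\dim X=2$; the proposition is stated in arbitrary dimension. Even on surfaces, the relation $K_X+E\sim_{\bQ}-rA$ is only $\bQ$-linear, so $\sO(K_X+A)$ is not literally $\sO((1-r)A-E)$ and cannot in general be rewritten as $\sO(dA-jE)$ for integers $d,j$. Your suggestion of ``replacing $A$ by a controlled multiple'' is dangerous: hypothesis (1) concerns $H^1(X,\sO(-A))$ specifically, and there is no reason $H^1(X,\sO(-nA))\neq 0$ for the multiple you would need. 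Finally, a direct ``jump'' argument starting from $-A$ itself runs into the obstruction you already noticed: if the jump occurs at $-A+n_0E$, the equation $eB=-A+n_0E$ with $B=dA-iE$ forces $ed=-1$, which has no positive integer solution.

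The paper avoids all of this by an indirect argument. Hypothesis (1) says the cone $\aC(X,A)$ is not $S_3$ at the vertex. Taking the index-$m$ cyclic cover $\pi\colon X'\to\Spec\sO_{\aC(X,A),v}$ associated to $mE_{\aC(X,A)}\sim 0$ makes the pullback $D'$ of $E_{\aC(X,A)}$ Cartier; if $D'$ were $S_2$ then $X'$ would be $S_3$, forcing $\sO$ (a direct summand of $\pi_*\sO_{X'}$) to be $S_3$, a contradiction. Hence $\pi_*\sO_{D'}=\bigoplus_{i=0}^{m-1}\sB_i$ is not $S_2$, so some summand $\sB_i$ has $H^1_v(\sB_i)\neq 0$. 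Unwinding this via the graded description of $\aC(X,A)\setminus\{v\}$ exhibits a concrete $d_i>0$ with $\alpha_{d_i,-i}\colon H^1(\sO(d_iA-(i+1)E))\to H^1(\sO(d_iA-iE))$ not injective. Now $B=d_iA-iE$ is ample by hypothesis (2), and \autoref{prop:S_2_at_vertex} and \autoref{corollary:partial_normalization_cone} finish as you indicated. The cyclic cover is the missing idea: it is what forces the non-vanishing $H^1(\sO(-A))$ to manifest as a non-injective $\alpha$ at some $(d,i)$ with $0\le i\le m-1$, without any Serre duality or integrality juggling.
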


The second condition means that $E$ is numerically small compared to $A$. For the del Pezzo surface studied in \cite{ABL} this criterion cannot be applied, since the ample $A$ belongs to the smallest effective numerical class.

The proof goes as follows. First, consider the cone pair $(\aC(X,A),E_{\aC(X,A)})$. By the first condition the cone $\aC(X,A)$ is not $S_3$ at the vertex point (see \cite[4.2]{Bernasconi_Kawamata-Viehweg_vanishing_fails}). By \cite[2.4]{Bernasconi_Kawamata-Viehweg_vanishing_fails} we have $\ind(E_{\aC(X,A)})=\ind(E)=m$. If $E$ is Cartier then $E_{\aC(X,A)}$ is Cartier and we obtain that $E_{\aC(X,A)}$ cannot be $S_2$. By \autoref{prop:S_2_at_vertex} and \autoref{corollary:partial_normalization_cone} we deduce that $E_{\aC(X,A)}$ is not seminormal either. Thus we may assume that $m>1$ for the rest of the proof.

Denote by $D$ the divisor $E_{\aC(X,A)}$ localized at the vertex $v$. We have an isomorphism $\sO(mD)\cong \sO_{\aC(X,A),v}=\sO$. This isomorphism gives a structure of $\sO$-algebra to $\bigoplus_{i=0}^{m-1}\sO(-iD)$. Let
        $$\pi\colon X'=\Spec_{\sO}\bigoplus_{i=0}^{m-1}\sO(-iD)\longrightarrow \Spec \sO$$
be the induced finite morphism (see \cite[2.52]{Kollar_Mori_Birational_geometry_of_algebraic_varieties}). Then $X'$ is an integral $S_2+G_1$ semi-local scheme. Denote by $D'$ the divisorial pullback of $D$: it holds that $\pi^{[*]}\sO(D)=\sO_{X'}(D')$. By \cite[2.53]{Kollar_Mori_Birational_geometry_of_algebraic_varieties} we see that $\sO_{X'}(D')$ is Cartier (actually principal).

We claim that the reduced subscheme $D'\subset X'$ cannot be $S_2$. Suppose for the sake of contradiction that it is. Then $X'$ would be $S_3$ at its closed points by \cite[5.3]{Kollar_Mori_Birational_geometry_of_algebraic_varieties}, hence $S_3$ everywhere. By \cite[5.4]{Kollar_Mori_Birational_geometry_of_algebraic_varieties} it would follow that $\bigoplus_{i=0}^{m-1}\sO(-iD)$ is an $S_3$ $\sO$-module. But $\sO$ is a direct summand, so $\sO$ would be $S_3$. In other words $\aC(X,A)$ is $S_3$ at its vertex, which we know not to be the case.

Thus $\sO_{D'}$ is not $S_2$, and neither is its pushforward $\pi_*\sO(D')$ as $\sO$-module. Let us identify $\pi_*\sO_{D'}$. We have the exact sequences
        \begin{equation*}
            0\to \sO(-(i+1)D)\to \sO(-iD)\to \left(\sO(-iD)\otimes\sO_{E_{\aC(X,A)}}\right)/\text{tors}\to 0
        \end{equation*}
and for ease of notation we write $\sB_i$ the cokernel. Then $\sB_i$ is a torsion-free rank $1$ $\sO_{E_{\aC(X,A)},v}$-module. Since $\pi_*\sO_{X'}(-D')=\bigoplus_{i=0}^{m-1}\sO(-iD)[\otimes]\sO_{E_{\aC(X,A)}}$ we obtain that $\pi_*\sO_{D'}=\bigoplus_{i=0}^{m-1}\sB_i$. We know that $\pi_*\sO(D')$ is not $S_2$, thus some $\sB_i$ is not $S_2$ as $\sO$-module. We fix this index $i$ for the rest of the proof. 

Next we want to a description of the non-zero local cohomology group $H^1_v(\sB_i)$. We have the exact sequence
        \begin{equation}\label{eqn:H^1_v_cokernel}
            0=H^0_v(\sB_i)\to H^0(\sB_i)\to H^0\left( U\cap E_{\aC(X,A)}, \sB_i\right)\to H^1_v(\sB_i)\to H^1(\sB_i)=0.
        \end{equation}
Let us identify the first two non-zero groups in this sequence:
    \begin{itemize}
        \item On the affine $\aC(X,A)$ we have the exact sequence
                $$
                0\to H^0\left( E_{\aC(X,A)},\sO(-(i+1)E_{\aC(X,A)}\right)\to H^0\left(\sO(-iE_{\aC(X,A)}\right)\to H^0(\sB_i)
                \to 0.
                $$
        As for any $j\in\bZ$ we have
            \begin{eqnarray*}
            H^0\left( \sO(jE_{\aC(X,A)}\right) &=&
            H^0\left( \aBC(X,A),\sO(jE_{\aBC(X,A)})\right)\\
            &=&\bigoplus_{d\geq 0}H^0(X,\sO(dA+jE))
            \end{eqnarray*}
        we see that
            $$H^0(\sB_i)=\bigoplus_{d\geq 0}\coker\left[ 
            H^0(\sO_X(dA-(i+1)E))\longrightarrow H^0(\sO_X(dA-iE))
            \right].$$
        \item The isomorphism $U\cong \aBC(X,A)\setminus X^-=\Spec_X\bigoplus_{d\in\bZ}\sO_X(dA)$ induces an isomorphism
                $$\sB_i|_{E_{\aC(X,A)}\cap U}\cong \bigoplus_{d\in\bZ}\coker\left[ 
                \sO_X(dA-(i+1)E)\longrightarrow \sO_X(dA-iE)
                \right].$$
        Thus if for any $d,j\in\bZ$ we let the sheaf $\sE_{d,j}$ on $E$ be defined by
                $$\sE_{d,j}=\coker\left[ 
                \sO_X(dA-(j+1)E)\longrightarrow \sO_X(dA-jE)
                \right],$$
        we obtain
                $$H^0\left( U\cap E_{\aC(X,A)}, \sB_i\right) =\bigoplus_{d\in\bZ}H^0(E,\sE_{d,-i}).$$
    \end{itemize}
Now consider the exact sequences of cohomology groups on $X$
        $$H^0(\sO(dA-iE))\overset{\psi_{d,-i}}{\longrightarrow} H^0(E,\sE_{d,-i})
        \to H^1(\sO(dA-(i+1)E))
        \overset{\alpha_{d,-i}}{\longrightarrow}
        H^1(\sO(dA-iE)).$$
It follows from the exact sequence \autoref{eqn:H^1_v_cokernel} and from the explicit descriptions of the first two terms of the said exact sequence, that
        $$H^1_v(\sB_i)\neq 0\quad \Leftrightarrow 
        \quad \exists d\in\bZ: \psi_{d,-i} \text{ not surjective}
        \quad \Leftrightarrow \quad  \exists d\in\bZ: \alpha_{d,-i}\text{ not injective}.$$
Now $A$ and $E$ are ample, so $H^0(E,\sE_{d,-i})=0$ for $d\leq 0$. (The case $d=0=i$ follows simply from that $H^0(X,\sO_X)\cong H^0(E,\sO_E)$ are equal to the algebraically closed base-field). Thus
        $$H^1_v(\sB_i)\neq 0 \quad \Leftrightarrow \quad \exists d_i>0: \alpha_{d_i,-i}\text{ not injective}.$$
Since we chose $\sB_i$ with the property that $H^1_v(\sB_i)\neq 0$, we would like to take as new ample divisor $B=d_iA-iE$. In general $B$ need not be ample. But if the second hypothesis of \autoref{prop:failure_KVV_anti_ample} is satisfied, as $0\leq i<\ind(E_{\aC(X,A)})=\ind(E)$, we obtain that $B$ is ample. In this case we conclude by \autoref{prop:S_2_at_vertex} and \autoref{corollary:partial_normalization_cone}.

\bibliographystyle{alpha}
\bibliography{Bibliography}

\end{document}